\newtheorem{thm}{Theorem}[section]
\newtheorem{pro}[thm]{Proposition}
\newtheorem{cor}[thm]{Corollary}
\newtheorem{lem}[thm]{Lemma}
\newtheorem{conjecture}[thm]{Conjecture}
\newtheorem{defn}[thm]{Definition}
\newcommand{\dime}{\operatorname{dim}}
\newcommand{\degr}{\operatorname{deg}}
\newcommand{\trace}{\operatorname{Trace}}
\newcommand{\rad}{\operatorname{rad}}
\newcommand{\Hom}{\operatorname{Hom}}
\newcommand{\Endo}{\operatorname{End}}
\begin{document}

\author{Jonathan Pakianathan and Nicholas F. Rogers}
\title{Lie algebras and Higher Torsion in p-groups
}
\maketitle

\begin{abstract}
The primary aim of this paper is to study exceptional torsion in the integral
cohomology of a family of $p$-groups associated to $p$-adic Lie algebras.
A spectral sequence $E_r^{*,*}[\mathfrak{g}]$ is defined for any
Lie algebra $\mathfrak{g}$ which models the Bockstein spectral sequence of the corresponding
group in characteristic $p$.

This spectral sequence is then studied for complex semisimple Lie algebras like $\mathfrak{sl}_n(\mathbb{C})$
and the results there are transferred to the corresponding $p$-group via the intermediary arithmetic
Lie algebra defined over $\mathbb{Z}$. The results obtained this way for a fixed Lie algebra scheme like
$\mathfrak{sl}_n(-)$ hold in a range in the corresponding Bockstein spectral sequence for all but finitely
many primes depending on the chosen range.

Over $\mathbb{C}$, it is shown that $E_1^{*,*}[\mathfrak{g}]=H^*(\mathfrak{g},U(\mathfrak{g})^*)=H^*(\Lambda BG)$
where $U(\mathfrak{g})^*$ is the (filtered) dual of the universal enveloping algebra of $\mathfrak{g}$ equipped with
the dual adjoint action and $\Lambda BG$ is the free loop space of the classifying space of an associated
compact, connected real form Lie group $G$ to $\mathfrak{g}$.

When passing to characteristic $p$, in the corresponding Bockstein spectral sequence,
a char $0$ to char $p$ phase transition is observed.
For example, it is shown that the algebra $E_1^{*,*}[\mathfrak{sl}_2[\mathbb{F}_p]]$
requires at least 17 generators unlike its characteristic
zero counterpart which only requires two.

\noindent
{\it Keywords: } Lie algebra, cohomology, $p$-group, free loop space, Bockstein spectral sequence.

\noindent
2010 {\it Mathematics Subject Classification.}
Primary: 20J06, 17B56;
Secondary: 17B50, 55T05.
\end{abstract}

\tableofcontents

\section{Introduction}

Let $\mathbf{k}$ be a PID throughout this paper
(in a lot of cases we'll be using
a field but sometimes we need $k=\mathbb{Z}$ or other decent rings and still use the term algebra through abuse of notation).
By a $\mathbf{k}$-Lie
algebra $\mathfrak{g}$ we mean a free $\mathbf{k}$-module
equipped with a bilinear bracket
$[-,-]: \mathfrak{g} \otimes_{\bf{k}} \mathfrak{g} \to \mathfrak{g}$
satisfying the Jacobi identity:
$$
[[x,y],z] + [[y,z],x] + [[z,x],y]=0
$$
for all $x, y ,z \in \mathfrak{g}$. We refer to the dimension of
$\mathfrak{g}$ as the rank of the underlying free $\mathbf{k}$-module, and
will usually assume this is finite unless otherwise specified.

Lie algebras arise in a variety of contexts: over $\mathbb{R}$ as the tangent
space at the identity of a Lie group or as the collection of smooth
vector fields on a smooth manifold,  over $\mathbb{Q}$ as the rational
homotopy Lie algebra of a space, over $\mathbb{Z}$ as the Lie algebra
associated to the descending central series of a residually
nilpotent group and over the $p$-adic integers $\mathbb{Z}_p$ or finite
fields in the theory of pro-$p$ groups (see \cite{DS}) 
and $p$-groups respectively. For
basic facts on Lie algebras and their cohomology used in this paper
see \cite{Bo} or \cite{GG}. For the basic facts on the cohomology 
of groups used in this paper see \cite{Benson} or \cite{Brown}.

Due to our primary motivation let us give a few more details on the
correspondence between $p$-groups and Lie algebras. 
Consider the formal power series for $e^x$ in
$\mathbb{Q}[[x]]$, i.e., $e^x = \sum_{n=0}^{\infty} \frac{x^n}{n!}$,
it is a trivial formality to check $e^0=1$ and $e^{-x} e^x = 1$
and that if $x$ and $y$ commute then $e^{x+y}=e^x e^y$.
However working in the (completed) free associative algebra over
$\mathbb{Q}$ on two
variables one finds that if $x$ and $y$ don't commute, then one has
the fundamental Baker-Campbell-Hausdorff identity:
$$
e^{x}e^{y}=e^{x+y+\frac{1}{2}[x,y] + \frac{1}{12}[x,[x,y]] - \frac{1}{12}[y,[x,y]] + \dots } = e^{x+y+I}
$$
where $I$ is an infinite sum of iterated brackets of $x$ and $y$
with rational coefficients and where $[a,b]=ab-ba$. Due to this,
whenever $\mathbb{Q} \subseteq \mathbf{k}$, we can associate a formal
group $e^{\mathfrak{g}}$ to every $\mathbf{k}$-Lie algebra $\mathfrak{g}$.
When $k=\mathbb{C}$ and the usual Euclidean metric is used, the defining
series converge and one recovers the classical ``Exponential-Log''
correspondence between Lie algebras and Lie groups.

Over the $p$-adic integers $\mathbb{Z}_p$ with the $p$-adic metric, and
$p$ an odd prime, $e^{px}$ can be shown to converge for every $x \in \mathbb{Z}_p$.
Thus if we let $\mathfrak{g}=\mathfrak{gl}_n(\mathbb{Z}_p)$ be the Lie algebra
of $n \times n$, $p$-adic matrices with the bracket $[\mathbb{A},\mathbb{B}]
=\mathbb{A}\mathbb{B}-\mathbb{B}\mathbb{A}$, one has that
$e^{p\mathfrak{g}}$ is a group. In fact since $e^{p\mathbb{A}} \equiv \mathbb{I} \text{ mod } p$, it
is not hard to show that $e^{p\mathfrak{g}}$ is the group of
$p$-adic matrices which are congruent to the identity matrix mod $p$,
which is called the $p$-congruence subgroup $\Gamma_{\mathfrak{gl}_n}$ of $GL_n(\mathbb{Z}_p)$.
(See exponential-log correspondence in \cite{Ro}).
From this formal group, one can form a tower of finite $p$-groups
$\Gamma_{\mathfrak{g},k}=e^{p\mathfrak{g}}$ mod
$p^{k+1}$ for $k=1,2,3,\dots$ for which
$e^{p\mathfrak{g}}$ is the inverse limit. It is not hard to see
that $\Gamma_{\mathfrak{g},1}$ is elementary abelian and can be identified
naturally with the residue Lie algebra of the $p$-adic Lie algebra
$\mathfrak{g}$ i.e.,
$\mathfrak{g}/p\mathfrak{g}=\mathfrak{g} \otimes \mathbb{F}_p$.
In fact $e^{p\mathbb{A}}=\mathbb{I} + p\mathbb{A} \text{ mod } p^2$
so $e^{p\mathbb{A}} \text{ mod } p^2$ corresponds to $\mathbb{A} \text{ mod }
p$.
A little more analysis shows that we have a central short exact
sequence
$$
0 \to \mathfrak{g} \otimes \mathbb{F}_p \to
\Gamma_{\mathfrak{g},2} \to \mathfrak{g} \otimes \mathbb{F}_p \to 0.
$$
and so $\Gamma_{\mathfrak{g},2}=\Gamma_{\mathfrak{g}} \text{ mod } p^3$ is a $p$-group
given by a central extension of an elementary abelian $p$-group by itself.

More generally, for $p$ odd, 
one can show given any $\mathbb{F}_p$-Lie algebra $\mathfrak{L}$
(whether it lifts to the $p$-adics or not)
there exists a unique $p$-power exact sequence
$$
0 \to \mathfrak{L} \to G(\mathfrak{L}) \to \mathfrak{L} \to 0
$$
where $G(\mathfrak{L})$ is a $p$-group of order $p^{2\dime (\mathfrak{L})}$.
In fact the construction $\mathfrak{L} \to G(\mathfrak{L})$
is part of a covariant functor from the category of
$\mathbb{F}_p$-Lie algebras
to the category of $p$-groups. (See \cite{BrP}). For $p=2$ certain phenomena involving quadratic forms arise 
and the correspondence needs to be modified (see \cite{PY1} and \cite{PY2}). Due to this, throughout this paper, 
$p$ denotes an odd prime.

In \cite{BrP} it was shown that if $n$ is the dimension of $\mathfrak{L}$, 
then
$$
H^*(G(\mathfrak{L}),\mathbb{F}_p) \cong \Lambda^*(\mathfrak{L}^*)
\otimes Poly(\mathfrak{L}^*) \cong \Lambda(x_1,\dots,x_n) \otimes
\mathbb{F}_p[y_1,\dots,y_n]
$$
where $\mathfrak{L}^*$ is the dual
of $\mathfrak{L}$. Here $\Lambda^*(V)$ denotes the exterior algebra
on the vector space $V$ where $V$ is given grading 1
while $Poly(V)=\mathbb{F}_p[V]$ denotes the polynomial
algebra on $V$ where $V$ is given grading 2.

Furthermore the differential provided by
the Bockstein $\beta$ was shown in \cite{BrP} to be the same as the differential
that makes this algebra into the Koszul resolution computing
$H^*(\mathfrak{L}, Poly(ad^*))$
where $ad^*$ is the dual adjoint representation
of $\mathfrak{L}$ on $\mathfrak{L}^*$ and the action of $\mathfrak{L}$
is extended to $Poly(ad^*)$ by declaring it to act via derivations.

Thus the $\beta$-cohomology of these $p$-groups was shown to be
$H^*(\mathfrak{L}, Poly(ad^*))$ and this is interesting as this cohomology
gives the 2nd page of the Bockstein spectral sequence used
to analyze the integral cohomology $H^*(G(\mathfrak{L});\mathbb{Z})$.
Explicitly if we decompose the polynomial algebra $Poly(ad^*)$ into
its homogeneous components (Hodge decomposition) $Poly(ad^*)=\oplus_{n=0}^{\infty} S^n$ we have
$$
B^*_2 = \bigoplus_{n=0}^{\infty} H^*(\mathfrak{L}, S^n)
$$
and so the higher torsion in the integral cohomology of the $p$-group
$G(\mathfrak{L})$ is reflected in these Lie algebra cohomology groups.

In fact an analysis of the integral cohomology of
$G(\mathfrak{sl}_2(\mathbb{F}_p))$ using these techniques was used to provide
a counterexample in \cite{Pk} to a
conjecture of Adem (see \cite{Adem}) at odd primes.

In this paper, we study a fundamental differential graded algebra (dga)
associated to any Lie algebra $\mathfrak{L}$ given by the
Koszul resolution whose underlying algebra is $\Lambda^*(\mathfrak{L}^*)
\otimes Poly(\mathfrak{L}^*)$ with a differential such that the
cohomology calculates $H^*(\mathfrak{L}, Poly(ad^*))$.
While our motivation is to calculate higher torsion in the integral
cohomology of $p$-groups and hence primarily concerns Lie algebras over
$\mathbb{F}_p$, our basic technique is to relate these Lie algebras to Lie algebras defined over $\mathbb{Z}$ and to then compare these to the corresponding
Lie algebras defined over $\mathbb{Q}$ or $\mathbb{C}$ where classical
results can be appealed to. This translates then to results
which hold for ``all but finitely many primes'' when studying the
corresponding $p$-groups. This philosophy was motivated by previous work with A. Adem
in \cite{AP}, though the implementation is quite different here.

The dga we construct $E[\mathfrak{L}] = \Lambda^*(\mathfrak{L}^*)
\otimes Poly(\mathfrak{L}^*)$ comes equipped with a spectral sequence
that is constructed in the appendix and functions in some sense as
an algebraic classifying complex for the co-Lie algebra $\mathfrak{L}^*$
much like $EG$ does for a Lie group $G$.

Interestingly enough, this seemingly specialized dga carries a lot of
structure. Over fields of characteristic zero and for nondegenerate
Lie algebras (Killing form nondegenerate), one has an isomorphism
of $\mathfrak{L}$-modules between $ad$ and $ad^*$. Furthermore using
the Poincare-Birkoff-Witt theorem one can identify $Poly(ad)$ with
$U(\mathfrak{L})$ as filtered modules
and $Poly(ad^*)$ with the filtered dual $U(\mathfrak{L})^*$ as graded modules where
$U(\mathfrak{L})$ is the universal enveloping algebra of $\mathfrak{L}$
equipped with the {\bf adjoint} action. (Throughout this paper $U(\mathfrak{L})$
and its filtered dual $U(\mathfrak{L})^*$ will always be equipped with the adjoint action and not the left
or right translation action! The filtered dual consists of functionals on the filtered universal enveloping 
algebra with support in one of the filtration levels, i.e., functionals that vanish on elements of high enough "degree". This is analogous to 
the graded dual construction. Note it is a proper subspace of the vector space dual as it does not allow for functionals which do not 
vanish on elements of arbitrarily high "degree".)
Thus $E[\mathfrak{L}]$ as a dga is nothing
other than the canonical Koszul complex computing
$H^*(\mathfrak{L}, U(\mathfrak{L})^*)$.
In this context it is
important to point out that over fields of characteristic zero,
$H^*(\mathfrak{L}, U(\mathfrak{L})^*)$
can be identified with the cohomology of the free loop space of $BG$ in the case that $G$ is a compact, connected
Lie group with Lie algebra a real form for
$\mathfrak{L}$ as we will see in this paper in the case that $G$ is semisimple as a
byproduct of our analysis of this spectral sequence.

The cohomology of the free loop space $\Lambda M$ on a manifold $M$ has been an object of intense study in the
last decade due to the existence of a ``string multiplication'' introduced by Chas and Sullivan
(See \cite{CS}) and the structure of a Batalin-Vilkovisky algebra with string theoretic
interpretations, (see \cite{GW}, \cite{Ma}).

Furthermore $H^0(\mathfrak{L},U(\mathfrak{L}))$ consists
of the central elements of $U(\mathfrak{L})$, the so called ``Casimir
ring'' which is very important in the representation theory of
$\mathfrak{L}$. We will see that their dual elements, the ``dual Casimirs'' in
$H^0(\mathfrak{L}, U(\mathfrak{L})^*)$ play an important role in the higher
torsion of $p$-groups.

%

 As one of the main tools of this paper,
we construct a spectral sequence whose $E_0$ term
is this dga and whose $E_{\infty}$ term is the cohomology of a point
which gives a lot of structure to the underlying dga. Though this
spectral sequence is defined over any coefficient $\mathbf{k}$, over
$\mathbb{F}_p$ it models the associated Bockstein Spectral sequence
of the corresponding $p$-group mentioned above. More precisely
we show that the $(E_0,d_0)$ term of this spectral sequence is
identical to the $(B_1,\beta)$ term of the Bockstein spectral sequence
of the $p$-group $G(\mathfrak{L})$. Though both spectral sequences converge
to the cohomology of a point, it is unknown if the higher pages coincide.
Nevertheless,
using the structure theorems available for both spectral sequences, and
that the first one can be used to compare the situation for the
$\mathbb{F}_p$-Lie algebra with that of corresponding integral and
complex Lie algebras, one can obtain results. Throughout this paper
$(E_r, d_r)$ will always refer to the algebraic spectral sequence
constructed in the appendix and $(B_r, \beta_r)$ will denote
the Bockstein spectral sequence of the group $G(\mathfrak{L})$
in the case that $\mathfrak{L}$ is a $\mathbb{F}_p$-Lie algebra.

The spectral sequence $E_r$ carries a lot of information, indeed even over
$\mathbb{C}$ it can be used to recover Harish-Chandra's calculation
of the Casimir ring of $sl_2(\mathbb{C})$ among many other results.
(We discuss the physical relevance of this within the paper.)
In this introduction we quote just this classical corollary for simplicity
to show the main frame of the arguments. In some of these statements
the ``Hodge decomposition'' $Poly(ad^*) = \oplus_{n=0}^{\infty} S^{n}$
of the polynomial algebra into its homogeneous components is used and $S^0$
always denotes the base ring $\mathbf{k}$ with trivial $\mathfrak{L}$
action.

\begin{thm}[Harish-Chandra calculation for $\mathfrak{sl}_2(\mathbb{C})$]
$$
H^*(\mathfrak{sl}_2(\mathbb{C}), Poly(ad^*)) \cong
H^*(\mathfrak{sl}_2(\mathbb{C}), U(\mathfrak{sl}_2(\mathbb{C}))^*)
\cong \Lambda^*(u) \otimes \mathbb{C}[\kappa]
$$
where $\kappa = H^2+EF \in H^0(\mathfrak{sl}_2(\mathbb{C}),S^2)$
is (a nonzero scalar multiple) of the Killing form
(an example of a ``dual Casimir'' element)
and $u \in H^3(\mathfrak{sl}_2(\mathbb{C}),S^0)$ is the volume form. $\kappa$ is dual to
the central Casimir element $H^2 + 2EF + 2FE$ in $H^0(\mathfrak{sl}_2,U(\mathfrak{sl_2}))$ which
corresponds to ``total angular momentum squared'' in spin systems. 
\end{thm}

Recall that a Casimir element is an element in the center of $U(\mathfrak{L})$ or equivalently an 
ad-invariant element of $U(\mathfrak{L})$. A dual Casimir element is an ad-invariant element of the 
filtered dual $U(\mathfrak{L})^*$.

As mentioned before, this calculation when used in conjunction with the spectral sequence
we construct can be used to derive results for the corresponding
Lie algebras $\mathfrak{sl}_2(\mathbb{F}_p)$ after passing through $\mathfrak{sl}_2(\mathbb{Z})$.
To state these results let us note
that for $\mathbb{F}_p$-Lie algebras $\mathfrak{L}$ we have
$(E_0,d_0)=(B_1,\beta)=\Lambda^*(\mathfrak{L}^*) \otimes
Poly(\mathfrak{L}^*)$ where $(B_r,\beta_r)$ is the Bockstein spectral
sequence of the $p$-group $G(\mathfrak{L})$. We will call the polynomial
degree of a homogeneous element of this algebra its ``Hodge degree''.

This leads to the following:

\begin{thm}[Higher torsion in $G(\mathfrak{sl}_2(\mathbb{F}_p))$]
Let $G(\mathfrak{sl}_2(\mathbb{F}_p))$ be the kernel of the reduction homomorphism
$SL_2(\mathbb{Z}/p^3\mathbb{Z}) \to SL_2(\mathbb{F}_p)$. Then if $B^*$ denotes
the Bockstein spectral sequence for $G(\mathfrak{sl}_2(\mathbb{F}_p))$ we have:

$$
E_1^{*,*}=B_2^*=H^*(\mathfrak{sl}_2(\mathbb{F}_p),Poly(ad^*))
\sim
\Lambda^*(u) \otimes \mathbb{F}_p[\kappa]
$$
where $\sim$ denotes isomorphism in the range of Hodge degree $\leq N$
for all but finitely many primes $p$ (depending on $N$).

Furthermore $B_3^*$ is {\bf finite dimensional} and we have
$B_3^* \sim \Lambda^*(u) \otimes \mathbb{F}_p[\kappa]$. In particular though
$B_3^*$ is {\bf always} finite dimensional, there is {\bf no fixed bound on its dimension
that holds for all primes $p$}.
\end{thm}

Let us try to explain this in a less technical manner. For a finite $p$-group
$G$ let $exp(G)$ be the exponent of $G$, i.e., the smallest positive
integer $n$ such that $g^n=e$ for all $g \in G$. Let $\bar{H}^*(G,\mathbb{Z})$ 
denote the reduced integral cohomology of $G$ and $e(G)$ denote 
its exponent. Let $e_{\infty}(G)$ denote the asymptotic exponent of $G$, i.e.,
the smallest positive integer such that
$e_{\infty}(G)\bar{H}^*(G, \mathbb{Z})$
is finite. It is known that
$$
exp(G) \text{ } \Big{|} \text{ } e_{\infty}(G) \text{ } 
\Big{|} \text{ } e(G) \text{ } \Big{|} \text{ } |G|
$$
and there exist $G$ such that $e_{\infty}(G) \neq e(G)$.
(See \cite{Pk}).
When $e_{\infty}(G) \neq e(G)$, we can define the highest dimension
of an element in the finite graded group
$e_{\infty}(G)\bar{H}^*(G,\mathbb{Z})$ as the
``exceptional dimension'' of $G$. Thus all torsion elements of exceptionally
high order lie at or below the exceptional dimension of $G$.

\begin{cor}
For all odd primes $p$, $e_{\infty}(G(\mathfrak{sl}_2(\mathbb{F}_p)))=p^2$
while $e(G(\mathfrak{sl}_2(\mathbb{F}_p)))=p^3$. Moreover by the calculations above,
for any $N$, for all but a finite number of primes $p$, the exceptional
dimension of $G(\mathfrak{sl}_2(\mathbb{F}_p))$ is bigger than $N$.
\end{cor}

Thus for every odd prime there are exceptional elements of order
$p^3$ in the reduced integral cohomology of $G(\mathfrak{sl}_2(\mathbb{F}_p))$, 
but there is no bound on the exceptional dimension that holds for all primes.
Thus by suitable choice of primes $p$, one can find elements of order
$p^3$ in as high a dimension as one likes. However, for any fixed prime $p$,
asymptotically the exponent of the integral cohomology of
$G(\mathfrak{sl}_2(\mathbb{F}_p))$ is always $p^2$.

For details on these calculations and their implications a more thorough and
leisurely development can be found in the paper itself. The basic idea
is as follows: \\
(1) For every simple Lie algebra over $\mathbb{C}$, a Cartan-Serre basis
can be taken to extract a corresponding integral Lie algebra. The
algebraic spectral sequences for these can be compared and, 
through characteristic zero techniques, computations of the required
dual Casimirs can be done. \\
(2) Since each piece in the Hodge decomposition of the dga of the
corresponding integral Lie algebra is of finite type, when looking at
a chunk corresponding to terms with Hodge degree $N$ or less, one can use
the universal coefficient theorems to say that for all but a finite number
of primes, the corresponding dga over $\mathbb{F}_p$ will look similar
to the one over $\mathbb{C}$. \\
(3) In every case though there is a breakdown in the dga over
$\mathbb{F}_p$ when the Hodge degree becomes close to $p-1$ and there
is generally a phase transition between characteristic zero behaviour and
characteristic $p$ behaviour. \\
(4) In many cases we find that the ``exceptional torsion'' is created
by the part which corresponds to the characteristic zero case and
the transition to characteristic $p$ kills this exceptional torsion and
is signaled by a divided power
issue in the dga for the integral Lie algebra. For example for $\mathfrak{sl}_2$
we prove the key identity $\kappa^{\frac{p-1}{2}} u = 0$ in
the $\mathbb{F}_p$-dga as the left hand side of the identity is $p$
times a generator in the corresponding $\mathbb{Z}$-dga. Detailed pictures
of the algebraic spectral sequence are available in the paper to show how this identity
helps cause a ``phase-transition'' between the char 0 and char $p$ behaviour
in the case of $\mathfrak{sl}_2$.

In the paper, all semisimple Lie algebras are studied, including $\mathfrak{sl}_n$ for $n > 2$,
and the exceptional Lie algebra $\mathfrak{g}_2$.
These are studied over $\mathbb{C}$,
$\mathbb{Z}$ and finite fields. Over $\mathbb{C}$ one finds the behaviour of $E_r^{*,*}[\mathfrak{g}]$
is like that of $EG$ which motivates us calling it the ``classifying spectral sequence for the Lie algebra''
in general.

\begin{thm}(Complex semisimple Lie algebras)
For any complex simple Lie algebra $\mathfrak{g}$ with corresponding compact form $\mathfrak{g}_{\mathbb{R}}$
and compact connected Lie group $G$ with Lie algebra $\mathfrak{g}_{\mathbb{R}}$ we have:
$$
E_1^{*,*}[\mathfrak{g}]=H^*(\mathfrak{g},U(\mathfrak{g})^*) \cong H^*(G,\mathbb{C}) \otimes H^*(BG, \mathbb{C})
\cong H^*(\Lambda BG, \mathbb{C}).
$$
where $\Lambda BG$ denotes the free loop space of the classifying space $BG$.
For example, for $\mathfrak{g}=\mathfrak{sl}_n(\mathbb{C})$ one has
\begin{align*}
\begin{split}
E_1^{*,*}=H^*(\mathfrak{sl}_n,U(\mathfrak{sl}_n)^*)
&\cong H^*(SU(n), \mathbb{C}) \otimes H^*(BSU(n),\mathbb{C}) \\
&\cong H^*(\Lambda BSU(n),\mathbb{C}) \\
&\cong
\Lambda^*(u_3,\dots,u_{2n-1}) \otimes \mathbb{C}[c_2,\dots,c_n] \\
&\cong \Lambda^*(u_3,\dots,u_{2n-1}) \otimes \mathbb{C}[\sigma_2,\dots,\sigma_n]
\end{split}
\end{align*}
where $c_i$ are the universal Chern classes and $\sigma_i$ are the adjoint invariant polynomial
functions on $\mathfrak{sl}_n$ given by the elementary symmetric functions on the eigenvalues expressed
in terms of the coefficients of the matrix. This invariant theory picture is used to obtain these results
and is explained completely in the relevant sections of the paper. In general, there are 3 core pictures
for $E_1^{*,0}=H^0(\mathfrak{g},U(\mathfrak{g})^*)$ discussed in this picture, (a) as elements dual
to central elements in $U(\mathfrak{g})$, i.e., as dual Casimirs, (b) as adjoint invariant polynomial
functions on $\mathfrak{g}$ and (c) as the cohomology algebra $H^*(BG,\mathbb{C})$.

As a final example, for $\mathfrak{g}=\mathfrak{sp}_{2n}(\mathbb{C})$ one has
\begin{align*}
\begin{split}
E_1^{*,*}=H^*(\mathfrak{sp}_{2n}(\mathbb{C}),U(\mathfrak{sp}_{2n}(\mathbb{C}))^*) &\cong
H^*(\Lambda BSp(n),\mathbb{C}) \\
&\cong
\Lambda^*(u_3,u_7,\dots,u_{4n-1}) \otimes H^*(P_1,P_2,\dots,P_n)
\end{split}
\end{align*}
where $P_i$ are the universal Pontryagin classes.

In general, $(E_0^{*,*}[\mathfrak{g}],d_0)$ is a free graded commutative dga whose cohomology
is that of the free loop space of $BG$. (However it is not a Sullivan algebra as it fails to satisfy
the nilpotency condition.)
\end{thm}
As mentioned above, each of these results over $\mathbb{C}$ yield a picture for the spectral
sequence of the associated Lie algebra over $\mathbb{F}_p$ and hence the Bockstein spectral
sequence of the corresponding $p$-group, at least for low Hodge degree and for all but finitely
many primes. This is discussed in detail in the paper, as is the behaviour of higher pages of the
spectral sequence.

Over $\mathbb{C}$, these results are probably, by and large, repackaging of classical results in the cohomology
of Lie groups and their classifying spaces, invariant theory and Casimir theory into a spectral sequence,
but we go through the process in detail in the paper as we need the spectral sequence for our work in $p$-groups.

When working in prime characteristic, computations become much more difficult and a
``weight stratification'' is required. If $R$ is a system of roots for the semisimple Lie algebra
$\mathfrak{g}$ with root lattice $\Lambda(R)$, we show that there is a decomposition of the
spectral sequence by weight which is very helpful for computations of associated
Lie algebras in prime characteristic:

\begin{thm}(Weight Stratification)
Let $\mathfrak{g}$ be a complex semisimple Lie algebra. Let $\mathfrak{g}_{\mathbb{Z}}$ be a
corresponding integral Lie algebra (always exists by Cartan-Serre basis). We can then get a corresponding
Lie algebra $\mathfrak{g}_{\bf k}$ over any ring of definition ${\bf k}$.

The root decomposition of $\mathfrak{g}$ induces a weight decomposition of spectral sequences:
$$
E_r^{*,*}[\mathfrak{g}_{\bf k}] = \bigoplus_{\alpha \in \Lambda(R)} E_r^{*,*}[\alpha]
$$
where $\Lambda(R)$ is the root lattice of $\mathfrak{g}$ and is a free abelian group of rank
equal to the rank of $\mathfrak{g}$, i.e., the dimension of a Cartan subalgebra of $\mathfrak{g}$.

In addition we show that if ${\bf k}$ is a field of characteristic zero then the $E_1$-page and beyond
only has contributions from the weight $0$ term while for a field of characteristic $p$, the ``polynomial
line'' $E_1^{*,0}$ has contributions only from weights which are zero modulo $p$.
\end{thm}

In sections~\ref{section: DeRham} and \ref{section: modp}, using
a DeRham complex and D-module language together with the weight stratification
mentioned above, explicit mod $p$ calculations are
performed for the spectral sequence
$E_r^{*,*}[\mathfrak{sl}_2(\mathbb{F}_p)]$ and the ``almost all'' caveat
is removed in a range of Hodge degrees:

\begin{thm}($\mathfrak{sl}_2(\mathbb{F}_p)$-computation)
$$
E_1^{*,*}[\mathfrak{sl}_2(\mathbb{F}_p)]=
H^*(\mathfrak{sl}_2(\mathbb{F}_p), Poly(ad^*)) \sim
\Lambda^*(u) \otimes \mathbb{F}_p[\kappa]
$$
where $\sim$ indicates isomorphism for {\bf all odd primes} in the range
of Hodge degrees strictly less than $p-1$. Thus for all odd primes $p$,
the characteristic $0$ to $p$ phase transition does not occur before
Hodge degree $p-1$ for the Lie algebra scheme $\mathfrak{sl}_2(-)$.
(For $p=2$ the breakdown occurs immediately at Hodge degree $0$.)

However a char $0$ to char $p$ phase transition occurs at Hodge degree $p-1$
and it is shown that one needs minimally 17 generators to generate \\
$H^*(\mathfrak{sl}_2(\mathbb{F}_p), Poly(ad^*))$ through Hodge degree $p$.
(see section~\ref{section: modp} for details on these generators and the algebra).

It follows trivally that $H^*(\mathfrak{sl}_2(\mathbb{Z}), Poly(ad^*))$
has $p$-torsion for {\bf every prime $p$} and hence cannot be a finitely
generated ring.
\end{thm}

From this it follows that the exceptional dimension (maximal dimension
for which exceptional high order torsion exists) for the
$p$ groups $G(\mathfrak{sl}_2(\mathbb{F}_p))$ is greater than or equal
to $2p-2$ for all odd primes $p$.

As a byproduct of the analysis of the spectral sequence, the following 
exact sequence is obtained for $\mathfrak{sl}_2=\mathfrak{sl}_2(\mathbb{F}_p)$ 
and $i \geq p-2$:
$$
0 \to H^3(\mathfrak{sl}_2, S^i) \to 
H^2(\mathfrak{sl}_2,S^{i+1}) \to 
H^1(\mathfrak{sl}_2, S^{i+2}) 
\to H^0(\mathfrak{sl}_2,S^{i+3}) \to 0,
$$ 
where $S^i$ is the module of homogeneous, degree $i$ polynomials 
on $\mathfrak{sl}_2$ equipped with the dual adjoint action.

Finally while many of the techniques apply
to solvable (or nilpotent) Lie algebras also, we concentrate on semisimple Lie algebras
in this paper, postponing the discussion for nilpotent Lie algebras such as those which would
arise from torsion-free, pro-$p$, finite index, subgroups of the Morava Stabilizer group to another
time.

\section{The Classifying Spectral Sequence $E[\mathfrak{g}]$
and Universal Coefficient Arguments}
\label{section: SpectralSequence}

Associated to any $\mathbf{k}$-Lie algebra $\mathfrak{g}$
is a spectral sequence $(E[\mathfrak{g}]^{s,t}_r, d_r)$, constructed in the appendix.
We call $t$ the exterior degree and $s$ the Hodge or polynomial
degree and $2s+t$ the total degree. This is a first quadrant spectral sequence
and often we will plot the Hodge degree along the $x$-axis and either
the exterior degree $t$ or the total degree $2s+t$ along the $y$-axis. The
reader is encouraged to draw such a diagram when following the arguments.
Often we will surpress
$\mathfrak{g}$ explicitly from the notation when it is understood.
This spectral sequence has the
following properties:
\\
(1) As a $\mathbf{k}$-algebra $E_0^{*,*}=\Lambda^*(\mathfrak{g}^*)
\otimes \mathbf{k}[\mathfrak{g}^*]$; i.e., it is the tensor product
of the exterior algebra on the dual of $\mathfrak{g}$ with the
polynomial algebra on the dual of $\mathfrak{g}$. If $s$
denotes the natural isomorphism $\Lambda^1(\mathfrak{g}^*) \to Poly^1[\mathfrak{g}^*]=\mathfrak{g}^*$, 
then we can write $E_0^{*,*}=\Lambda^*(\mathfrak{g}^*)
\otimes \mathbf{k}[s(\mathfrak{g}^*)]$ as a bigraded algebra where
$\mathfrak{g}^*$ is given bigrading $(s,t)=(0,1)$ while
$s(\mathfrak{g}^*)$ is given bigrading $(s,t)=(1,0)$. Note however
that graded commutativity of the dga is determined by total degree and
that the elements $s(\mathfrak{g})$ have even total degree as required for
them to generate a polynomial algebra. Explicitly, if $\{ e_1, \dots, e_n \}$
is a $\mathbf{k}$-basis for $\mathfrak{g}$ and $\{ x_1, \dots, x_n \}$
the canonical dual basis for $\mathfrak{g^*}$ determined by
$x_i(e_j) = \delta_{i,j}$, and we set $y_i=s(x_i)$, then
$$
E^{*,*}_0 = \Lambda^*(x_1,\dots,x_n) \otimes \mathbf{k}[y_1,\dots,y_n].
$$
\\
(2) Each page of the spectral sequence is a dga over $\mathbf{k}$
such that
$$
d_r: E^{s, t}_r \to E^{s+r, t-(2r-1)}_r
$$
i.e., the $r$th differential raises the Hodge degree by $r$
and reduces the exterior degree by $(2r-1)$, and hence raises total
degree by $1$. The differential $d_r$ is a derivation with respect to the induced algebra structure; i.e.,
$$
d_r(\alpha \beta) = d_r(\alpha)\beta + (-1)^{|\alpha|}\alpha d_r(\beta),
$$
where $\alpha, \beta$ are homogeneous elements of $E_r^{*,*}$ and $|\alpha|$ denotes
the total degree of $\alpha$.

Of course as usual $E^{*,*}_{r+1} = H^*(E_r^{*,*},d_r)$.
\\
(3) The differential $d_0$ is induced naturally as follows.
It is the unique derivation on $E_0^{*,*}$ such that
$$
d_0: \Lambda^1(\mathfrak{g}^*) \to \Lambda^2(\mathfrak{g}^*)
$$
is minus the dual of the Lie-bracket $[-,-]:
\Lambda^2(\mathfrak{g}) \to \mathfrak{g}$ and
$$
d_0: s(\mathfrak{g}^*) \to s(\mathfrak{g}^*) \otimes \Lambda^1(\mathfrak{g}^*)
$$
is the dual of the Lie-bracket followed by $Identity \otimes s^{-1}$.
More explicitly if $c_{ij}^k$ are the structure constants of
$\mathfrak{g}$ with respect to the $\mathbf{k}$-basis $\{e_1,\dots,e_n\}$, 
$\{x_i\}$ is the dual basis to $\{e_i\}$, and $y_i=s(x_i)$, then
\begin{equation}\label{eqn: d0formula}
\begin{split}
d_0(x_i) = -\sum_{j < k} c_{jk}^i x_j x_k
\text{ and } \\
d_0(y_i) = \sum_{1 \leq j,k \leq n} c_{jk}^i y_j x_k.
\end{split}
\end{equation}
\\
(4) Thus the dga $(E_0^{*,*},d_0)$ breaks up as a direct sum of
finite dimensional dga's $E_0^{*,*} = \oplus_{s=0}^{\infty}
(E_0^{s,*},d_0)$ where the decomposition is based on polynomial
degree and will be refered to as the ``Hodge decomposition''.
As explained in the appendix, each of these turns out to be the Koszul
resolution for the Lie algebra $\mathfrak{g}$ for the module
$Poly^s(\mathfrak{g}^*)$ of homogeneous degree $s$ polynomials
on the dual of $\mathfrak{g}$. (For infinite fields, $Poly^s(\mathfrak{g}^*)$ can be identified
as homogeneous degree $s$ polynomial functions $f: \mathfrak{g} \to {\bf k}$ on $\mathfrak{g}$.)
Thus one has
$$
E_1^{*,*} = \bigoplus_{s=0}^{\infty} H^*(\mathfrak{g}, Poly^s(\mathfrak{g}^*))
=H^*(\mathfrak{g},Poly(\mathfrak{g}^*)).
$$
When $\mathbf{k}$ is a field of characteristic zero,
one can identify the dual of the Universal Enveloping algebra
of $\mathfrak{g}$ i.e., $U(\mathfrak{g})^*$ with $Poly(\mathfrak{g}^*)$
as an algebra where $U(\mathfrak{g})^*$ is given the dual algebra
structure coming from the comultiplication in the primitively generated
Hopf algebra $U(\mathfrak{g})$. This follows essentially from the Poincare-Birkoff-Witt 
theorem (we need a field of characteristic zero as in general
the dual of a primitively generated Hopf algebra is a divided power algebra
which only is a polynomial algebra over fields of characteristic zero).
This isomorphism is also one of $\mathfrak{g}$-modules
as long as we equip $U(\mathfrak{g})^*$ with the dual adjoint action
coming from the adjoint action of $\mathfrak{g}$ on $U(\mathfrak{g})$.
Thus over fields of characteristic zero,
$$
E_1^{*,*} = H^*(\mathfrak{g}, U(\mathfrak{g})^*).
$$
\\
(5) The spectral sequence comes from two anticommuting differentials
on $E_0^{*,*}$, $d_0$ and $d_1$. $d_1$ is defined as follows:
$d_1$ is the unique derivation which is $s$ on $\Lambda^1(\mathfrak{g}^*)$
and zero on $Poly^1(s(\mathfrak{g}^*))$ i.e., $d_1(x_i)=y_i$
and $d_1(y_i)=0$ for all $1\leq i \leq n$. This $d_1$ induces
the differential on $E_1^{*,*}$. Then via a completely standard zig-zag
construction (see \cite{BT}), all higher differentials can be determined
from these two as follows: If $\omega$ represents a class in $E_r$ then
one inductively can define $d_1(\omega) = d_0(\alpha_1)$,
$d_1(\alpha_k) = d_0(\alpha_{k+1})$ for $1 \leq k \leq r-2$
and finally $d_1(\alpha_{r-1})$ represents $d_r(\omega)$ and, as is
typical with these sorts of constructions, the final class can be shown
to be independent of the choices made along the way.
\\
(6)
Once $2r-1 > \dime (\mathfrak{g})$ we have $d_r=0$ and so
$E_{m}^{*,*}=E_{\infty}^{*,*}$ once $m > \frac{\dime (\mathfrak{g})+1}{2}$. Furthermore as shown in the appendix,
$$
E_{\infty}^{s,t} =
\begin{cases} \mathbf{k} \text{ if } s=t=0 ; \\
0 \text{ otherwise.}
\end{cases}
$$
\\
(7) $E[-]^{*,*}$ defines a contravariant functor from the category
of $\mathbf{k}$-Lie algebras to the category of $\mathbf{k}$-differential
graded algebra spectral sequences. In other words, if
$\theta: \mathfrak{g} \to \mathfrak{h}$ is a map of $\mathbf{k}$-Lie algebras
then it is not hard to show that
$\theta^*: \mathfrak{h}^* \to \mathfrak{g}^*$ commutes with the dual
Adjoint action in the sense that if we make $\mathfrak{h}^*$ a
$\mathfrak{g}$-module using $\theta$ then $\theta^*$ is a
$\mathfrak{g}$-module map. It is then a routine exercise to check
that the algebra map induced from $\theta$,
$$
\Lambda^*(\mathfrak{h}^*) \otimes \mathbf{k}[s(\mathfrak{h}^*)]
 \to \Lambda^*(\mathfrak{g}^*) \otimes \mathbf{k}[s(\mathfrak{g}^*)],
$$
commutes with $d_0$ and $d_1$ and hence induces a morphism of spectral
sequences $(E[\mathfrak{h}]^{*,*}_r,d_r) \to (E[\mathfrak{g}]^{*,*}_r,d_r)$.
\\
(8)
The construction $E[-]^{*,*}$ is natural with respect to extension of
scalars. In other words if $\mathbf{k} \to \mathbf{K}$ is a homomorphism
of rings between two PIDs and $\mathfrak{g}$ is a $\mathbf{k}$-Lie algebra
then $\mathfrak{g} \otimes_{\mathbf{k}} \mathbf{K}$ is naturally a
$\mathbf{K}$-Lie algebra and
$$
(E[\mathfrak{g} \otimes_{\mathbf{k}} \mathbf{K}]^{*,*}_0,d_0)
= (E[\mathfrak{g}]^{*,*}_0 \otimes_{\mathbf{k}} \mathbf{K}, d_0
\otimes_{\mathbf{k}} Id)
$$
as $\mathbf{K}$-dga's. Thus if $\mathbf{K}$ is flat over $\mathbf{k}$
(this is needed
to avoid tor terms in the universal coefficient theorem) one has
$$
(E[\mathfrak{g} \otimes_{\mathbf{k}} \mathbf{K}]^{*,*}_r,d_r)
= (E[\mathfrak{g}]^{*,*}_r \otimes_{\mathbf{k}} \mathbf{K}, d_r
\otimes_{\mathbf{k}} Id)
$$
as $\mathbf{K}$-spectral sequences.
\\
(9) For $k$ a field with $p$ elements, $p$ an odd prime and
$\mathfrak{g}$ a $\mathbf{k}$-Lie algebra which lifts over
$\mathbb{Z}/p^2\mathbb{Z}$, one has the following result
proven in $\cite{BrP}$:

If $(B^*_r,\beta_r)$ is the Bockstein spectral sequence for the
$p$-group $G(\mathfrak{g})$, then
$(B_1,\beta_1)=(E_0,d_0)$ and so $B_2^{n}=\oplus_{s,t| 2s+t=n}E_1^{s,t}$.
Furthermore both $B_{\infty}$ and $E_{\infty}$ give the cohomology of
a point. Thus $E_r^{*,*}$ can be viewed as an algebraic model for
the Bockstein spectral sequence and is computationally useful
as we will see. It is known that $E_1=B_2$, and the properties of
$E_1$ will allow us to compute it more readily.
(It is unknown whether $E_r=B_{1+r}$ for $r > 1$ but it is true in all
computed examples.)  In the following we concentrate on computations
of $B_2^*=E_1^{*,*}=H^*(\mathfrak{g},Poly(\mathfrak{g}^*))$. The flexibility
to change coefficient rings in $E_1[-]^{*,*}$ will play a very fundamental
role, as will the fact that we know the differentials $d_r$ in $E_r$
raise polynomial degree by $r$.

The fundamental idea is encoded in the following application of the
universal coefficient theorem. This might be a bit abstract here but in the
next few sections where explicit examples are worked out it will become
clearer. To avoid needless generality we state this theorem for a
special case of complex simple Lie algebras though results can easily
be extended.

\begin{thm}[Fundamental Comparison Theorem]
\label{thm: FCT}
Let $\mathfrak{g}$ be a $\mathbb{Z}$-Lie algebra such that
$\mathfrak{L}=\mathfrak{g} \otimes \mathbb{C}$ is a complex simple Lie
algebra. (Cartan-Serre basis shows all complex simple Lie algebras arise this way.)
Let $H^0(\mathfrak{L},Poly(\mathfrak{L}^*))=H^0(\mathfrak{L},U(\mathfrak{L})^*) \subseteq Poly(\mathfrak{L}^*)$ be denoted by $\Gamma$.
Then we have
$$
E_1[\mathfrak{L}]^{*,*} = H^*(\mathfrak{L},U(\mathfrak{L})^*)
= H^*(\mathfrak{L},\mathbb{C}) \otimes_{\mathbb{C}} \Gamma
$$
where $H^*(\mathfrak{L},\mathbb{C})$ can be identified with the
cohomology of a certain (compact form) Lie group $G$ and is an exterior
algebra on odd degree generators. (We will see in future sections, that $\Gamma$ will correspond
to $H^*(BG,\mathbb{C})$ which is a polynomial algebra.)

Now since the part of $E_0[\mathfrak{g}]$ corresponding to elements of
Hodge degree $\leq N$ is finitely generated, its cohomology
$E_1[\mathfrak{g}]$ can have torsion only for finitely many primes.

Thus for all but a finite number of primes $p$, we have
$$
\dime(E_1^{s,t}[\mathfrak{g} \otimes \mathbb{F}_p]) =
\dime(E_1^{s,t}[\mathfrak{g} \otimes \mathbb{C}])
$$
for all $s,t$ with $s \leq N$.
On the other hand we will see that if a restriction on Hodge
degree is not imposed, this will {\bf always} break down. For example
for $\mathfrak{g}=\mathfrak{sl}_2(\mathbb{Z})$ we will see that
$E_1^{*,*}[\mathfrak{g}]=H^*(\mathfrak{g},Poly(\mathfrak{g}^*))$ has
$p$-torsion for every prime $p$
and hence is not a finitely generated ring.
\end{thm}
\begin{proof}
The proof of the form of $E_1[\mathfrak{L}]^{*,*}$ hinges on two facts.
\\
(1) As $\mathfrak{L}$ is a simple Complex Lie algebra, every finite
dimensional complex representation $V$ decomposes as a sum of
irreducible representations. \\
(2) If $V$ is an irreducible representation other than the trivial one-dimensional 
representation, then $H^*(\mathfrak{L},V)=0$. (This is a generalization of 
Whitehead's Lemma, see Proposition 3.4.2 in page 151 of \cite{GG}.)
With this we can decompose
$Poly(\mathfrak{L}^*)=\Gamma \oplus W$ as $U(\mathfrak{L})$-modules
where $\Gamma$ are the adjoint invariant
polynomials; i.e., $\Gamma=H^0(\mathfrak{L},Poly(\mathfrak{L}^*))$, as mentioned
in the statement of the theorem.  Note $W$ decomposes as a sum of
irreducibles, none of which is the trivial one-dimensional representation.
Thus
\begin{align*}
\begin{split}
H^*(\mathfrak{L},Poly(\mathfrak{L}^*))&=H^*(\mathfrak{L},\Gamma) \oplus
H^*(\mathfrak{L}, W) \\
&=H^*(\mathfrak{L},\Gamma) \text{ by Whitehead's Lemma }\\
&=H^*(\mathfrak{L},\mathbb{C}) \otimes_{\mathbb{C}} \Gamma
\text{ as the action of $\mathfrak{L}$ on $\Gamma$ is trivial.}
\end{split}
\end{align*}

For every simple complex Lie algebra $\mathfrak{L}$, 
there exists a real Lie algebra $\mathfrak{L_R}$
whose complexification is $\mathfrak{L}$ and which is the Lie algebra
of a compact connected Lie group $G$. It is known classically that
$H^*(\mathfrak{L_R},\mathbb{R})=H^*(G,\mathbb{R})$ is an exterior algebra
on odd generators (as it is a Hopf algebra). Thus its complexification
$H^*(\mathfrak{L},\mathbb{C})$ is also an exterior algebra on odd generators.

The final parts are a direct application of the universal coefficient
theorem to the complex $(E_0^{*,*}(\mathfrak{g}), d_0)$ and will be left
to the reader. The example for $\mathfrak{sl}_2(\mathbb{Z})$ will be discussed
later in the paper.

\end{proof}

In the next section we compute the spectral sequence
$E_r^{*,*}(\mathfrak{sl}_2(\mathbb{C}))$ 
completely using two separate methods, 
one using just properties of the spectral sequence itself and the
second using classical invariant theory for Lie groups. Both
recover the classical calculation of Harish-Chandra of the
Casimir algebra for $\mathfrak{sl}_2(\mathbb{C})$. The invariant
theory viewpoint provides a concrete meaning to the answer.
Then in section~\ref{section: complexsimple}, we compute the behaviour
of the spectral sequence $E_r^{*,*}(\mathfrak{L})$ for any
complex simple Lie algebra and relate the answer using invariant theory.

Using the Fundamental Comparison Theorem above we then will get
results for the corresponding Lie algebras of corresponding $p$-groups
which we will use to understand higher torsion in their integral cohomology.

\section{Computation of Dual Casimirs and Invariant Theory}

For a good background discussion of the theory of complex
simple Lie algebras and corresponding Lie groups that we use
in this paper, see \cite{FH}. Recall that Casimir elements are elements of the center of the universal enveloping algebra or in other words 
ad-invariant elements of $U(\mathfrak{L})$. Dual Casimir elements are ad-invariant elements of the filtered dual $U(\mathfrak{L})^*$.

Recall $\mathfrak{sl}_2(\mathbb{C})$ is a Lie algebra
of dimension $3$ with $\mathbb{C}$-basis
$$
\left \{x_h = \begin{bmatrix} 1 & 0 \\ 0 & -1 \end{bmatrix},
x_e = \begin{bmatrix} 0 & 1 \\ 0 & 0 \end{bmatrix},
x_f = \begin{bmatrix} 0 & 0 \\ 1 & 0 \end{bmatrix} \right \}
$$
and commutation relations
$$[x_h,x_e]=2x_e, [x_h,x_f]=-2x_f, [x_e,x_f]=x_h.$$

Let ${h, e, f}$ denote the corresponding dual basis in
$\Lambda^1(\mathfrak{L}^*)$ and let ${H, E, F}$ denote
the ``suspended'' dual basis in $Poly^1(\mathfrak{L}^*)$.

Thus
$$
E_0^{*,*}[\mathfrak{sl}_2(\mathbb{C})]
= \Lambda^{*}(h,e,f) \otimes \mathbb{C}[H,E,F]
$$
and using Theorem~\ref{thm: FCT}, we find
$$
E_1^{*.*}[\mathfrak{sl}_2(\mathbb{C})]
= \Lambda^*(u) \otimes \Gamma
$$
where $\Lambda^*(u)=H^*(\mathfrak{sl}_2(\mathbb{C}), \mathbb{C})$
with $u=hef$ (a simple computation) and \\
$\Gamma=H^0(\mathfrak{sl}_2(\mathbb{C}),U(\mathfrak{sl}_2(\mathbb{C}))^*)$
is the dual Casimir algebra that we seek to find.

It is not hard to see that $d_r=0$ on $\Gamma$ for $r \geq 1$ as these
differentials lower exterior degree and so
$\Gamma$ consists of permanent cycles in the spectral sequence
$E_r^{*,*}[\mathfrak{sl}_2(\mathbb{C})]$. However, we also know
that $E_{\infty}^{*,*}[\mathfrak{sl_2(\mathbb{C})}]$ is the cohomology
of a point, so everything needs to be killed off, thus $u$ must support
some differential. Note that $H^*(\mathfrak{L},Poly^1(ad^*))=0$
by Whitehead's lemma as $Poly^1(ad^*)=ad^*$ is a nontrivial irreducible
representation of $\mathfrak{L}$. Thus $d_1(u)=0$ and so $E_1=E_2$.
Now since the
dimension of the Lie algebra is three, and $d_2$ lowers exterior degree
by three, this is the final possible nonzero differential and so
$E_3=E_{\infty}$ and so we know $(E_2,d_2)$ is an acyclic complex.

Let $\kappa_2=d_2(u)$ then $\kappa_2 \in \Gamma$ is an invariant quadratic
polynomial. The $E_2$ page is concentrated on two horizontal lines:
a line with exterior degree 3 with term $u \otimes \Gamma$ and a line
with exterior degree 0 with term $1 \otimes \Gamma$. The
differential $d_2$ between these two lines is given by
multiplication by $\kappa_2 \cdot: \Gamma \to \Gamma$. Since the complex is
acyclic we conclude that $\kappa_2 \cdot:  \Gamma \to \Gamma^+$ is an
isomorphism where $\Gamma^+$ are the elements of $\Gamma$ of positive
degree. A simple induction then shows $\Gamma \cong \mathbb{C}[\kappa_2]$.
This yields the following theorem:

\begin{thm}{$E_r[\mathfrak{sl}_2(\mathbb{C})]$-computation:}
\label{thm: dualcasimirsl_2}
$$
E_1^{*,*}[\mathfrak{sl}_2(\mathbb{C})] \cong
H^*(\mathfrak{sl}_2(\mathbb{C}), U(sl_2(\mathbb{C}))^*)
\cong \Lambda^*(u) \otimes \mathbb{C}[
\kappa_2]
$$
where $u \in E_1^{0,3}$ and $\kappa_2 \in E_1^{2,0}$. The element $\kappa_2$
is a permanent cycle in the spectral sequence and $d_2(u)=\kappa_2$.
$\kappa_2$ is a nonzero multiple of the Killing form of
$\mathfrak{sl}_2(\mathbb{C})$ and hence is a nonzero multiple
of $H^2+EF$.
\end{thm}
\begin{proof}
Everything besides the last comment has been already proven.
Note for any semisimple complex Lie algebra $\mathfrak{g}$, the
Killing form
$$\kappa(A,B)=\trace(ad(A) \circ ad(B): \mathfrak{g}
\to \mathfrak{g})
$$ is a symmetric $2$-form on $\mathfrak{g}$ which
is nondegenerate and hence non-zero. Furthermore it has the property
that $\kappa([A,B],C)+\kappa(B,[A,C])=0$ for all $A,B,C \in \mathfrak{g}$
which implies that it represents a nonzero adjoint-invariant
element in $$H^0(\mathfrak{g}, Poly^2(\mathfrak{g}^*))=
E_1^{2,0}[\mathfrak{g}].$$
Since in our case $H^0(\mathfrak{g}, Poly^2(\mathfrak{g}^*))$
is one-dimensional and spanned by $\kappa_2$, the final comment follows. A simple computation
shows that the Killing form is a nonzero multiple of $H^2+EF$ when the
standard identification of quadratic forms with symmetric inner products
is made.
\end{proof}

A few things to note about the last calculation: \\
(1) The number of exterior/polynomial generators in
$E_1=H^*(\mathfrak{sl}_2,U(\mathfrak{sl}_2)^*)$ is one,  which 
equals the rank of $\mathfrak{sl}_2(\mathbb{C})$, i.e., the
dimension of a Cartan subalgebra in this case the span of $x_h$.
This will hold in general for any classical complex simple Lie algebra. \\
(2) The computation of the dual Casimirs can be used to find the Casimirs,
i.e., the central elements in the non-commutative algebra
$U(\mathfrak{sl}_2)$. This is because the Killing form sets
up an isomorphism between the representations $ad$ and $ad^*$ and hence
between $H^0(\mathfrak{sl}_2, U(\mathfrak{sl}_2(\mathbb{C})))$
and $H^0(\mathfrak{sl}_2, U(\mathfrak{sl}_2(\mathbb{C}))^*)$ i.e., between
the Casimir algebra and the dual Casimir algebra. One has to be a bit careful
though as $U(\mathfrak{sl}_2)$ is only a filtered module and it is
noncommutative: the Killing form isomorphism is only between the associated
graded of $U(\mathfrak{g})$ and $U(\mathfrak{g})^*$ in general.

What this means is
that when finding the corresponding central elements in $U(\mathfrak{sl}_2)$,
one has to do a few things: \\
(a) Find the duals under the Killing form and view them in the associated graded of
$U(\mathfrak{sl_2})$. Simple computations show $\kappa(x_e,x_f)=4=\kappa(x_f,x_e)$, $\kappa(x_h,x_h)=8$
and all other inner products of the three basis elements with respect to the Killing form are zero.
Thus with respect to the Killing form, the dual of $H$ is $\frac{1}{8}x_h$, the dual of $E$ is $
\frac{1}{4}x_f$ and the dual
of $F$ is $\frac{1}{4}x_e$.
Thus the dual of $H^2+EF \in H^0(\mathfrak{sl}_2,U(\mathfrak{sl}_2)^*)$ 
is $\frac{1}{64}x_h^2 + \frac{1}{16}x_fx_e$
or up to
scaling, $x_h^2 + 4x_fx_e \in H^0(\mathfrak{sl}_2, A)$ where $A$ is the associated graded of
the noncommutative filtered algebra $U(\mathfrak{sl}_2)$. \\
(b) Find the correct invariant lift from the
symmetric algebra $A$ to the noncommutative universal enveloping algebra
$U(\mathfrak{sl}_2)$. An invariant lift, i.e., a lift to a central element of $U(\mathfrak{sl}_2)$ is always
possible by semisimplicity and this procedure works in general for any complex semisimple Lie algebra.
In this case, the correct lift can be obtained by symmetry considerations as
$x_h^2 + 2x_fx_e + 2x_ex_f$. \\

The physical importance of this computation is as follows. In angular momentum or spin systems,
$x_h, x_e, x_f$ represent $L_z, L_+=L_x + iL_y,L_-=L_x-iL_y$ respectively,
angular momentum operators in 3 directions in $\mathbb{R}^3$.
Thus their span $\mathfrak{sl}_2(\mathbb{C})$ represents a space of angular momentum operators taken with
respect to all the possible axis directions. In quantum mechanics, a simultaneous measurement of two quantities
can be taken if and only if their operators commute. Thus in this case since $\mathfrak{sl}_2(\mathbb{C})$
has rank one, one can only measure one of these quantities at a time and cannot simultaneously measure
say $L_z$ and $L_x$. Since it is beneficial to be able to simultaneously measure as many observables as possible, one then is asked if there are any operators in the operator algebra
generated by $L_z,L_x,L_y$ which commute with all these axis-specific angular momentum operators.
Since $U(\mathfrak{sl}_2)$ is exactly this operator algebra, we find that mathematically we are asking for
exactly the center of $U(\mathfrak{sl}_2)$ i.e., the Casimir algebra. 

By this calculation, we see
the only such operators are polynomials in $x_h^2+ 2x_ex_f + 2x_fx_e$, which after identifications comes out
to be a scalar multiple of the total angular momentum squared $L^2=L_x^2+L_y^2+L_z^2$. Thus in spin systems
one can measure $L^2$ and, say, $L_z$ simultaneously and the corresponding values turn out to be crucial
in the physics and chemistry of spin systems. 
In general we will not comment anymore about finding Casimirs from dual Casimirs as it is
not the primary aim of this paper.

While it is possible to perform a similar calculation of the spectral sequence for $\mathfrak{sl}_3
(\mathbb{C})$ purely using properties of the spectral sequence $E_r^{*,*}$,
for $\mathfrak{sl}_n(\mathbb{C}), n > 4$ ambiguities arise in differentials.
Thus to perform the analogous calculation for an arbitrary complex
simple Lie algebra, a supplementary approach has to be taken involving
invariant theory. We will repeat the calculation for
$\mathfrak{sl}_2(\mathbb{C})$ using this invariant theory approach now before doing the general
calculations over $\mathbb{C}$ in the next section. This recalculation will
give us additional insight into what we are calculating.

Here are the basic observations needed for the invariant theory viewpoint: \\
(1) Fix a semisimple Lie algebra $\mathfrak{L}$ over $\mathbb{C}$,
then $Poly^*(\mathfrak{L}^*)$ can be naturally identified as the algebra of
polynomial functions on $\mathfrak{L}$. (This identification is natural
and injective over infinite fields in general, though over finite fields,
different polynomials can induce the same function on the underlying
vector space.) \\
(2) If $Poly^*(\mathfrak{L}^*)$ is given the dual adjoint action,
$H^0(\mathfrak{L}, Poly(\mathfrak{L}^*))=\Gamma$ can be identified
with the $ad$-invariant polynomial functions on $\mathfrak{L}$. \\
(3) If $G$ is a connected Lie group with Lie algebra $\mathfrak{L}$
then the image of the exponential map $exp: \mathfrak{L} \to G$
generates $G$. (Indeed a connected Lie group is generated by a small
neighborhood of the identity and the image of the exponential map
contains such. The exponential map itself need not be onto, indeed
the exponential map $exp: \mathfrak{sl}_2(\mathbb{C}) \to SL_2(\mathbb{C})$
is not onto.) It can then be shown that the Adjoint action of the Lie group
$G$ on $\mathfrak{L}$ has exactly the same invariant polynomial functions on
it. Thus
$$
H^0(G, Poly(\mathfrak{L}^*))=H^0(\mathfrak{L},Poly(\mathfrak{L}^*))=\Gamma.
$$ \\
(4) It follows from the remarks above that in the case
of $G=SL_n(\mathbb{C})$ and $\mathfrak{L}=\mathfrak{sl}_n(\mathbb{C})$,
$\Gamma$ can be identified with the polynomial functions
$f: \mathfrak{sl}_n(\mathbb{C}) \to \mathbb{C}$ that are invariant
under $SL_n(\mathbb{C})$-conjugation. \\
(5) Consider the characteristic polynomial of a matrix,
$$P_{\mathbb{A}}(x) = det(x\mathbb{I}-\mathbb{A})
=x^n - \sigma_1(\mathbb{A}) x^{n-1} + \sigma_2(\mathbb{A}) x^{n-2}
- \dots + (-1)^n \sigma_n(\mathbb{A}).$$
Note that $\sigma_j: \mathfrak{gl}_n \to \mathbb{C}$ is a homogeneous
polynomial
of degree $j$ in the entries of $\mathbb{A}$ with value equal
to the $j$th elementary symmetric function of the eigenvalues of
$\mathbb{A}$. Thus for example $\sigma_1=trace$ is a linear polynomial
in the entries of $\mathbb{A}$ while $\sigma_n=det$ is a degree $n$
polynomial in the entries of $\mathbb{A}$. Since the characteristic
polynomial of a matrix is unchanged by conjugation/similarity, we conclude
that
$\sigma_1, \sigma_2, \dots, \sigma_n$ represent elements in
$H^0(GL_n(\mathbb{C}), Poly(\mathfrak{gl}_n^*))=H^0(\mathfrak{gl}_n,
Poly(\mathfrak{gl}_n^*))$. It is also clear that these elements
are algebraically independent polynomials as they restrict to the
elementary symmetric functions when restricted as functions over the
diagonal matrices, and these are well-known to be algebraically independent.
\\
(6) It is not hard to show that when restricted to functions over
$\mathfrak{sl}_n(\mathbb{C})$, $\sigma_1=0$ but $\sigma_2,\dots,\sigma_n$
remain algebraically independent. (Just restrict to the diagonals
of $\mathfrak{sl}_n$ to check this). Thus we have found a polynomial
subalgebra $\mathbb{C}[\sigma_2,\dots,\sigma_n]$ of
$\Gamma=H^0(\mathfrak{sl}_n,Poly(\mathfrak{sl}_n^*))$. Comparing
with our previous calculation we see that
$$
H^0(\mathfrak{sl}_2(\mathbb{C}),Poly(\mathfrak{sl}_2(\mathbb{C}))^*)
=\mathbb{C}[\kappa_2]=\mathbb{C}[\sigma_2], 
$$
and thus the only conjugation-invariant polynomial functions on
$\mathfrak{sl}_2(\mathbb{C})$ are polynomials in the determinant function, 
which is itself a quadratic function. (In the next section, we will
show the analogous result holds for $\mathfrak{sl}_n(\mathbb{C})$, 
i.e., that the conjugation invariant polynomials will be a polynomial
algebra on $\sigma_2, \dots, \sigma_n$.)
\\
(7) From these theoretical considerations, it follows that the Killing
form and determinant are both conjugation-invariant homogeneous
quadratic functions $\mathfrak{sl}_2(\mathbb{C}) \to \mathbb{C}$
and, from the computations, must be a nonzero scalar multiple of each other.
Of course this can be explicitly checked:
$$
det\left(\begin{bmatrix} a & b \\ c & -a \end{bmatrix}\right)
= -a^2-bc=-(a^2+bc).
$$
Using the same dual basis stated in the beginning of this section,
we then see $det = -(H^2+EF)$ and so indeed we have checked
that $det$ and the Killing quadratic form $\kappa_2$ are scalar multiples of
each other as functions on $\mathfrak{sl}_2(\mathbb{C})$.

\section{Computation of $E_r^{*,*}[\mathfrak{g}]$ where
$\mathfrak{g}$ is a complex simple Lie algebra}
\label{section: complexsimple}

In this section all Lie algebras will be over $\mathbb{C}$ and so
we will suppress that from the notation.
Let us start first with the family $\mathfrak{sl}_n$. Many of the arguments
are similar for other complex simple Lie algebras.

Recall a
Cartan subalgebra $\mathfrak{h}$ (maximal abelian Lie subalgebra consisting
of semisimple/diagonalizable elements) is given by the diagonal
matrices inside $\mathfrak{sl}_n$. For a semisimple complex
Lie algebra $\mathfrak{g}$, all Cartan subalgebras are Adjoint-conjugate
and the dimension of a Cartan subalgebra is called the rank of the
Lie algebra. Thus $\mathfrak{sl}_n$ has rank $n-1$. The adjoint
action of $\mathfrak{h}$ on $\mathfrak{g}$ is diagonalizable and a
simultaneous nonzero eigenvalue function $\alpha: \mathfrak{h} \to \mathbb{C}$
is called a {\bf root} with corresponding eigenvector $v$ called a
{\bf root vector}. Thus
$$
[h, v] = \alpha(h) v
$$
for all $h \in \mathfrak{h}$. A root $\alpha$ gives a complex linear
functional on $\mathfrak{h}$, i.e., $\alpha \in \mathfrak{h}^*$.

The vector space
$\mathfrak{g}_{\alpha}=\{ v \in \mathfrak{g} | [h,v]=\alpha(h)v
\text{ for all } h \in \mathfrak{h} \}$ is called the root space
corresponding to the root $\alpha$. It is standard (see section 14.1 of \cite{FH})
that the root spaces are 1-dimensional and
$\mathfrak{g} = \mathfrak{h} \oplus (\oplus_{\alpha \in R} \mathfrak{g}_
{\alpha}$) where $R$ is the set of roots. The Weyl group $W(\mathfrak{g})$
is a
group generated by reflections which acts on $\mathfrak{h}$ via
automorphisms of the ambient Lie algebra $\mathfrak{g}$ which map
the subspace $\mathfrak{h}$ back into itself.
In the case of $\mathfrak{sl}_n$, the Weyl group is the symmetric
group on $n$ letters $\Sigma_n$ which acts on the diagonal matrices
$\mathfrak{h}$ by permuting the diagonal entries.

The following lemma is basic to the invariant theory approach to
computing the dual Casimirs of $\mathfrak{g}$:

\begin{lem}[Invariants Lemma]
\label{lem: Invariants Lemma}
Let $\mathfrak{g}$ be a complex simple Lie algebra, $\mathfrak{h}$
a Cartan subalgebra, and $W(\mathfrak{g})$ the Weyl group of $\mathfrak{g}$.
Let
$$
\Gamma=H^0(\mathfrak{g},Poly^*(\mathfrak{g}^*)) =
H^0(\mathfrak{g},U(\mathfrak{g})^*)
$$
be the algebra of adjoint-invariant polynomial functions on
$\mathfrak{g}$ i.e., the dual Casimir algebra.
Let $H^0(W(\mathfrak{g}), Poly^*(\mathfrak{h}^*))$ denote the
Weyl group invariant polynomial functions on $\mathfrak{h}$. Then there
is an algebra monomorphism
$$
\theta: \Gamma \to H^0(W(\mathfrak{g}), Poly^*(\mathfrak{h}^*)),
$$
and furthermore $H^0(W(\mathfrak{g}),Poly^*(\mathfrak{h}^*))$
can be identified as $H^*(BG,\mathbb{C})$, where $G$ is the classifying
space of a simply connected, compact Lie group $G$
corresponding to a compact form
$\mathfrak{g}_{\mathbb{R}}$ of $\mathfrak{g}$.
\end{lem}
\begin{proof}
In this proof, note that a polynomial function on $\mathfrak{g}$
being invariant under the adjoint action of $\mathfrak{g}$ is the
same as it being invariant under the Adjoint action of $G$ where
$G$ is a connected Lie group having Lie algebra $\mathfrak{g}$ (though
the meaning of invariant is slightly different for Lie algebras versus
Lie groups). We will capitalize Adjoint when we are specifically referring
to the Lie group Adjoint action.

Restriction of adjoint-invariant polynomial functions on $\mathfrak{g}$ to polynomial
functions on $\mathfrak{h}$ gives an algebra homomorphism
$$
\theta: \Gamma \to H^0(W(\mathfrak{g}), Poly^*(\mathfrak{h}^*))
$$
as the restriction of an adjoint-invariant polynomial to
$\mathfrak{h}$ will yield a polynomial function on $\mathfrak{h}$
which is invariant under the Weyl group (this is because each element
of the Weyl group is induced by an Adjoint automorphism of
$\mathfrak{g}$). Thus it remains to show injectivity.
If $f$ is a nonzero element of $\Gamma$ then $f(v) \neq 0$
for some $v \in \mathfrak{g}$. Since $f: \mathfrak{g} \to \mathbb{C}$
is given by a polynomial, it is continuous. Thus using Jordan decompositions,
we can find a semisimple (diagonalizable) element $w$ close to $v$
such that $f(w) \neq 0$. (Matrices with distinct
eigenvalues form an open dense subset of $\mathfrak{sl}_n$ for example.)
Then $w$ lies in a Cartan subalgebra $\mathfrak{h}'$ of $\mathfrak{g}$
and since any two Cartan subalgebras are conjugate, there is an Adjoint Lie
algebra automorphism taking $\mathfrak{h}' \to \mathfrak{h}$ which
takes $w$ to some element $z \in \mathfrak{h}$. By invariance under
Adjoint automorphisms, $f(w)=f(z) \neq 0$ and so $f|_{\mathfrak{h}} \neq 0$
and so $\theta$ is a monomorphism.

The final statement is not needed for any of our $p$-group results but provides a
picture for the complex results so we will only sketch it. If $G$ is a compact,
simply connected Lie group $G$ corresponding to a compact form
$\mathfrak{g}_{\mathbb{R}}$ of $\mathfrak{g}$, then a maximal torus $T$ corresponds to a
compact form for the Cartan subalgebra $\mathfrak{h}$ and
$N_G(T)/T$ corresponds to the Weyl group $W$.

Thus
$$
H^0(W,Poly^*(\mathfrak{h})^*)=H^0(W, H^*(BT,\mathbb{C}))=H^*(BG,\mathbb{C}).
$$
The final equality is a well-known result of Borel and will not be reproven here.
In the middle we used that $H^*(BT,\mathbb{C})$ can be identified
with a polynomial algebra on $H^2(BT,\mathbb{C})=H^1(T,\mathbb{C})=
H^1(\mathfrak{h},\mathbb{C})=\mathfrak{h}^*$.
\end{proof}

As there are a lot of different important equivalences going on, let us
look at the explicit case $\mathfrak{sl}_n$ first. In this case
note that a compact form is given by the real Lie algebra
$\mathfrak{su}_n$ (see section 26.1 in \cite{FH}) with corresponding compact, simply
connected Lie group $G=SU(n)$. One has
$H^*(BSU(n),\mathbb{C})=\mathbb{C}[c_2,c_3,\dots,c_n]$ a polynomial
algebra on the universal Chern classes with \\ $\degr(c_i)=2i$ and
$\theta(\Gamma) \subseteq \mathbb{C}[c_2,c_3,\dots,c_n]$.

However in the last section we saw that
$\mathbb{C}[\sigma_2, \sigma_3,\dots, \sigma_n] \subseteq \Gamma$
with $\degr(\sigma_i)=2i$. ($\sigma_i$ is a degree $i$ polynomial
on $\mathfrak{sl}_n$ but in our setup as an element in $H^0(\mathfrak{sl}_n,
Poly^*(\mathfrak{sl}_n)^*)$ is an element of total degree $2i$ due to the
suspension of the polynomial variables.) Comparing these set
inclusions of graded algebras we conclude:

\begin{thm}[$\mathfrak{sl}_n$ invariant calculation]
Let $\Gamma=H^0(\mathfrak{sl}_n, Poly^*(ad^*))=H^0(\mathfrak{sl}_n, U(\mathfrak{sl}_n)^*)$ be the algebra of adjoint-invariant polynomial functions
on $\mathfrak{sl}_n$, or equivalently the dual Casimir algebra. Then
$\Gamma=\mathbb{C}[\sigma_2,\sigma_3,\dots,\sigma_n]$ where the
$\sigma_i$ have degree $2i$ and express the $i$th elementary symmetric
polynomial of the eigenvalues of a matrix in terms of the entries of the
matrix.

Restriction defines an isomorphism
$$
\theta: H^0(\mathfrak{sl}_n,U(\mathfrak{sl}_n)^*) \to
H^0(\Sigma_n, Poly^*(\mathfrak{h}^*))=H^*(BSU(n),\mathbb{C})=
\mathbb{C}[c_2,c_3,\dots,c_n].
$$
Thus in this case the dual Casimirs of $\mathfrak{sl}_n(\mathbb{C})$
can be identified with the polynomial algebra on the universal
Chern classes $c_2,c_3,\dots,c_n$.
Finally
$$
E_1^{*,*}[\mathfrak{sl}_n]=H^*(SU(n) \times BSU(n))=\Lambda^*(u_3,u_5,\dots,
u_{2n-1}) \otimes \mathbb{C}[c_2,c_3,\dots,c_n]$$
where $u_i \in E_1^{0,i}$ and $c_i \in E_1^{i,0}$.
\end{thm}

It is now a simple matter to work out the full behaviour of the
spectral sequence $E_r^{*.*}$. We will not really use this for any
$p$-group results so we will be brief. We recall the definition of
indecomposables in a connected graded algebra:

\begin{defn}
Let $A=\oplus_{n=0}^{\infty}A_n$ be a graded algebra and $A^+$ be
the ideal of positive degree elements. (We'll assume $A_0=\mathbf{k}$
is a copy of the base field.)
$$
A^+ \cdot A^+ =\left\{ \sum_{j=1}^m a_jb_j | a_j, b_j \in A^+ \right\}
$$
is called the ideal of decomposables.
$$
Q = A/(A^+ \cdot A^+)
$$
is called the space of indecomposables.
\end{defn}

We now make a series of observations:\\
(1) Let $A^*$ be $E_1^{*,*}[\mathfrak{sl}_n]$ graded by total degree.
Then the space of indecomposables $Q$ is spanned by
$\{u_3, u_5,\dots, u_{2n-1}, c_2,c_3, \dots, c_n \}$,
and furthermore the $c_i$ are permanent cycles and algebraically independent. \\
(2) Since the $d_r$ are derivations the image of a decomposable
under $d_r$ is decomposable. \\
(3) The spectral sequence converges to the cohomology of a point and
for dimensional reasons the only differentials that can be supported
between indecomposables are $d_{k}(u_{2k-1})=\mu_kc_k$ modulo decomposables, 
where $\mu_k \in \mathbb{C}-\{0\}$ for $2 \leq k \leq n$.
As the original Chern classes were algebraically independent, redefining
$c_k$'s as necessary we can assume $\mu_k=1$ for $2 \leq k \leq n$.

Thus we conclude:
\begin{thm}
\label{thm: spectralsl_n}
The spectral sequence $E_r^{*,*}[\mathfrak{sl}_n]$ is given by
$$
E_1^{*,*}[\mathfrak{sl}_n] = \Lambda^*(u_3,u_5,\dots, u_{2n-1}) \otimes
\mathbb{C}[c_2,c_3,\dots,c_n]
$$
where $u_i \in E_1^{0,i}$ and $c_i \in E_1^{i,0}$. The higher differentials
are determined by the fact that the $c_i$ are permanent cycles and
$d_r(u_{2r-1})=c_r$ for $2 \leq r \leq n$. (In particular $d_i(u_{2r-1})=0$ for $1 \leq i < r$.) Thus in particular
$E_{n+1}^{*,*}=E_{\infty}^{*,*}$ is the cohomology of a point.

If $B^*_r$ denotes the Bockstein spectral sequence of the $p$-group
$G(\mathfrak{sl}_n(\mathbb{F}_p))$, which is the congruence kernel
of the reduction $SL_n(\mathbb{Z}/p^3\mathbb{Z}) \to SL_n(\mathbb{F}_p)$, 
then for any Hodge degree $N$,
$$
B^*_2=E_1^{*,*}[\mathfrak{sl}_2(\mathbb{F}_p)]
\sim \Lambda^*(u_3,u_5,\dots,u_{2n-1}) \otimes \mathbb{F}_p[c_2,c_3,\dots,c_n]
$$
where $\sim$ indicates isomorphism of complexes up
to Hodge degree $N$ for all but finitely many primes (depending on $N$).

\end{thm}
\begin{proof}
All but the last paragraph follows from the observations before the statement of the theorem. The last paragraph follows from the fundamental
comparison theorem \ref{thm: FCT} discussed in previous sections.
In this instance we compare $B_2^*=E_1^{*,*}[\mathfrak{sl}_n(\mathbb{F}_p)]$
to $E_1^{*,*}[\mathfrak{sl}_n(\mathbb{C})]$ through
$E_1^{*,*}[\mathfrak{sl}_n(\mathbb{Z})]$. Excluding the finite set of primes where torsion occurs
in $E_1^{*,*}[\mathfrak{sl}_n(\mathbb{Z})]$ in Hodge degrees $\leq N$, we have by Theorem \ref{thm: FCT} that
$$
\dime_{\mathbb{F}_p}(E_1^{s,t}[\mathfrak{sl}_n(\mathbb{F}_p)])=\dime_{\mathbb{C}}(E_1^{s,t}[\mathfrak{sl}_n(\mathbb{C}])
$$
for arbitrary $t$ and $s \leq N$.

There is a subtlety regarding the ring structure,
as the fundamental comparison theorem just shows that the dimensions match up; there is a divided
power issue that needs to be addressed when excluding the finite set of primes. To clarify, we know
that $A^{*,*}=E_1^{*,*}[\mathfrak{sl}_n(\mathbb{Z})]=H^*(E_0^{*,*}[\mathfrak{sl}_n(\mathbb{Z})],d_0)$
modulo torsion is a bigraded abelian group
where $A^{i,j}$ is free abelian with rank equal to the dimension (as a complex vector space) of
$E_1^{i,j}[\mathfrak{sl}_2(\mathbb{C})]$.  Thus one can choose generators in $A^{0,i}$ corresponding
to the $u_i$ (and call them the same thing) and in $A^{i,0}$ corresponding to the $c_i$ (and call them the same
thing). However in $A^{*,*}$ it is not true in general that a combination $u_3^{\alpha_2} \dots u_{2n-1}^{\alpha_n}
c_2^{a_2}\dots c_n^{a_n}$ ($0 \leq \alpha_i \leq 1$, $a_i$ nonnegative integers),
is a generator in its corresponding group; all that is known is that it will
be {\bf a nonzero integer times a generator}. Thus while this element is part of a basis
in $E_1^{*,*}[\mathfrak{sl}_n(\mathbb{C})]$ it is not necessarily a generator element in
$E_1^{*,*}[\mathfrak{sl}_n(\mathbb{Z})]$ but only a nonzero integral multiple $v$ of one.
(In the case where more than one of these elements lie in the same location $A^{i,j}$, $v$ should
be replaced by the determinant of the nonsingular matrix which expresses the collection of these
elements in $A^{i,j}$ in terms of a $\mathbb{Z}$-basis of $A^{i,j}$.)

Now when we reduce to $E_1^{*,*}[\mathfrak{sl}_n(\mathbb{F}_p)]$, if the nonzero multiple $v$ mentioned
above is a multiple of $p$, the corresponding product becomes zero (or linearly dependent in the case
of more than one element in the same position in the $E_1$-page), while if $v$ is not a multiple of $p$,
then the mod $p$ reduction will be a basis element in the corresponding position in the $E_1$-page.
Since we know the complex and mod $p$ dimensions match up through Hodge degree $N$, to ensure the correct
algebra structure, we just have to avoid all primes $p$ dividing the aforementioned multiples $v$ in the
finite number of locations up through Hodge degree $N$ in addition to all the primes where $p$-torsion
occurs in $E_1^{*,*}[\mathfrak{sl}_n(\mathbb{Z})]$ up through the same Hodge degree.
Since this collection of primes is a finite set the theorem follows.

We will see a bit more detail on this issue when working out the ring structure of
$E_1^{*,*}[\mathfrak{sl}_2(\mathbb{F}_p)]$ in later sections.

\end{proof}

Note: we would like to emphasize that it is only known
that $(E_0^{*,*},d_0)=(B_1^*,\beta)$ and so $E_1^{*,*}=B_2^*$ by
the work in \cite{BrP}. It is not known that the higher differentials
agree though often this is forced for dimensional reasons.

As a consequence of the last theorem we see for a generic odd prime $p$,
in the Bockstein spectral sequence for $G(\mathfrak{sl}_n(\mathbb{F}_p))$,
$B_2^*=E_1^{*,*}[\mathfrak{sl}_2(\mathbb{F}_p)]$ looks like
$\Lambda^*(u_3,u_5,\dots,u_{2n-1}) \otimes \mathbb{F}_p[c_2,c_3,\dots,c_n]$
for low Hodge degrees. However, 
when the Hodge degree approaches $p$, this will break down. One way to see
that this has to happen is to note that for low Hodge degree,
$E_1^{*,*}[\mathfrak{sl}_n(\mathbb{F}_p)]=B_2^*$ will
look like
it has Krull dimension $n-1$, the rank of $\mathfrak{sl}_n$.
However once the Hodge degree passes $p$, the existence of an invariant
polynomial algebra on the $p$-powers of the original dual basis of
$\mathfrak{sl}_n[\mathbb{F}_p]$ will exhibit a Krull dimension of $n^2-1$,
the dimension
of $\mathfrak{sl}_n$.

Thus as we will see explicitly in future sections, in the low dimensions
the higher torsion $B_2^*$ is governed by a ``characteristic zero''
contribution which has Krull dimension given by the {\bf rank}
of the underlying
Lie algebra. However when the Hodge degree approaches $p$, a
{\bf phase transition }
occurs and $B_2^*$ explodes behaving now like an algebra which has
Krull dimension given by the
{\bf dimension} of the underlying Lie algebra.
This phenomenon occurs for all the simple Lie algebras and we will
work it out explicitly in further sections for $\mathfrak{sl}_2$.

In the remainder of this section, we run quickly through the other
families of simple complex Lie algebras as well as the exceptional ones.
We will not put in as many details as for the family $\mathfrak{sl}_n$, 
as that would make the paper unwieldy, and our primary concern is the
characteristic $p$ considerations. All of the basic calculations needed
can be found in Mimura and Toda's treatise, \cite{MT} together
with Borel's result that for a compact Lie group $G$ with maximal
torus $T$ we have
$$
H^*(BG,\mathbb{Q}) \cong H^*(BT,\mathbb{Q})^W, 
$$
where $W$ is the Weyl group. Letting $\mathfrak{g}$ and
$\mathfrak{h}$ be the corresponding Lie algebra and Cartan subalgebra one
then can show as done above for $\mathfrak{sl}_n$ that
$$
H^*(BG,\mathbb{C}) \cong H^*(BT,\mathbb{C})^W \cong Poly(\mathfrak{h}^*)^W \cong 
H^0(\mathfrak{g},Poly(ad^*)) = H^0(\mathfrak{g},U(\mathfrak{g})^*).
$$
Thus in all cases for a complex simple Lie algebra $\mathfrak{g}$
with corresponding compact form $\mathfrak{g}_{\mathbb{R}}$ and
compact, connected Lie group $G$, we have
$$
E_1^{*,*}[\mathfrak{g}] \cong H^*(G,\mathbb{C}) \otimes H^*(BG,\mathbb{C}).
$$
However, it is important to note that everything in $E_1^{*,*}$ is expressed
in terms of the Lie algebra $\mathfrak{g}$ and invariant polynomial
functions and forms on it. In each case the differentials transgress
from the ``fiber'' $H^*(G,\mathbb{C})$ to the ``base'' $H^*(BG,\mathbb{C})$
for pretty much the same reasons as in the $\mathfrak{sl_n}$ case.
Thus over the complex numbers, $E_r^{*,*}[\mathfrak{g}]$
functions very much like
the spectral sequence for the fibration $G \to EG \to BG$.

This is why we will refer to $E_r^{*,*}[\mathfrak{g}]$ as the {\bf classifying
spectral sequence} of the Lie algebra $\mathfrak{g}$. It provides an
algebraic model which can be used even when the Lie algebras are
defined over fields of characteristic $p$.

Now without further ado, we summarize the calculation of this
spectral sequence for the rest of the complex simple Lie algebras,
in each case this gives a picture of the higher torsion in the
Bockstein spectral sequence of corresponding $p$-groups,  before
the ``char 0 to char p'' phase transition.

The family $\mathfrak{sp}_{2n}(\mathbb{C})$ can be described as follows
after suitable choice of basis (see section 16.1 of \cite{FH}):
$$
\mathfrak{sp}_{2n}(\mathbb{C}) =
\left\{ \begin{bmatrix} A & B \\ C & D \end{bmatrix} | B^T=B, C^T=C, D=-A^T,
A,B,C,D \in \mathfrak{gl}_n(\mathbb{C}) \right\}.
$$
It has dimension $2n^2+n$ and rank $n$ with a Cartan subalgebra
given by diagonal matrices subject to the constraint $D=-A^T$. The
Weyl group in this picture is generated by permutations of the entries
of $A$ and $D$ simultaneously and consistently with $D=-A^T$ and also
by pairwise swaps between diagonal elements in $A$ with their negatives
in $D$. With this, it is not hard to show that Weyl group invariant
polynomial functions on $\mathfrak{h}$ restrict to polynomials
in the elementary symmetric functions on the $n$ diagonal entries in $A$
together with the condition that they be invariant under negation
of any variable, and thus polynomials in $S_2,S_4,\dots$ where
$S_{2i}$ is the $i$th elementary symmetric polynomial in the squares
of the variables.
Note $S_{2i} \in H^0(\mathfrak{g},Poly(ad^*))$ is polynomial
of degree $2i$ and hence in $E_1^{2i,0}$, and of total degree $4i$.

It turns out $\mathfrak{so}_{2n+1}(\mathbb{C})$ has the same
rank $n$ and Weyl group as $\mathfrak{sp}_{2n}(\mathbb{C})$
even though they are not isomorphic as Lie algebras in general.
This however means that the two corresponding compact forms,
the compact symplectic group $Sp(n)$ for $Sp_{2n}(\mathbb{C})$
and $SO(2n+1)$ for $\mathfrak{so}_{2n+1}(\mathbb{C})$ have the
same cohomology with rational (and hence complex) coefficients.
The same thing goes for the cohomology of their classifying spaces
by Borel's theorem. Thus we get:

\begin{thm}[Symplectic and odd orthogonal Lie algebras]
\label{thm: ComplexSpSpectral}

Let $\mathfrak{g}$ be either $\mathfrak{so}_{2n+1}(\mathbb{C})$ or
$\mathfrak{sp}_{2n}(\mathbb{C})$. Then

\begin{align*}
\begin{split}
E_1^{*,*}[\mathfrak{g}] &\cong H^*(SO(2n+1),\mathbb{C}) \otimes
H^*(BSO(2n+1),\mathbb{C}) \\
&\cong H^*(Sp(n),\mathbb{C}) \otimes H^*(BSp(n),\mathbb{C}) \\
&\cong  \Lambda^*(u_3,u_7,\dots,u_{4n-1}) \otimes
\mathbb{C}[P_1,P_2,P_3,\dots,P_n],
\end{split}
\end{align*}
where $u_i \in E_1^{0,i}$ and $P_i \in E_1^{2i,0}$ correspond to the
universal Pontryagin classes and $Sp(n)$ is the compact symplectic
group.
In the spectral sequence, the $P_i$'s are permanent cycles and we have
$d_{2r}(u_{4r-1})=P_r$ for $1 \leq r \leq n$. Thus $E_{2n+1}^{*,*}=E_{\infty}^{*,*}$ is the cohomology of a point. Note the rank of
$\mathfrak{so}_{2n+1}(\mathbb{C})$ and $\mathfrak{sp}_{2n}(\mathbb{C})$
is $n$ while the dimension is $2n^2+n$.

As usual this means that the 2nd term of the Bockstein spectral
sequence for the $p$-groups $G(\mathfrak{so}_{2n+1}(\mathbb{F}_p))$
and $G(\mathfrak{sp}_{2n}(\mathbb{F}_p))$ satisfy:
$$
B_2^*=E_1^{*,*}[\mathfrak{sp}_{2n}(\mathbb{F}_p)] \sim \Lambda^*(u_3,u_7,\dots,u_{4n-1})
\otimes \mathbb{F}_p[P_1,P_2,\dots,P_n],
$$
where $\sim$ means that the two complexes are isomorphic in the range of Hodge degrees $\leq N$
for all but finitely many primes (that depend on $N$).

\end{thm}

We now will just list the results for the other complex simple Lie
algebras, the implications for higher torsion in the corresponding $p$-groups
being understood:

\begin{thm}(Even orthogonal Lie algebras)
For $n \geq 4$,
\begin{align*}
\begin{split}
E_1^{*,*}[\mathfrak{so}_{2n}] &\cong H^*(SO(2n),\mathbb{C}) \otimes
H^*(BSO(2n),\mathbb{C}) \\
&\cong  \Lambda^*(u_3,u_7,\dots,u_{4n-5},v) \otimes
\mathbb{C}[P_1,P_2,P_3,\dots,P_{n-1},e],
\end{split}
\end{align*}
where $u_i \in E_1^{0,i}$, $v \in E_1^{0,2n-1}$.
$P_i \in E_1^{2i,0}$ correspond to the
universal Pontryagin classes and $e \in E_1^{n,0}$
corresponds to the universal Euler class and both are permanent
cycles in this spectral sequence. Furthermore we have
$d_{2r}(u_{4r-1})=P_r$ for $1 \leq r \leq n-1$ and $d_{n}(v)=e$.
Note $\mathfrak{so}_{2n}$
has rank $n$ and dimension $2n^2-n$.
\end{thm}

\begin{thm}(Exceptional simple Lie Algebras)
\label{thm: ExcepSpec}
$$
E_1^{*,*}[\mathfrak{g}_2] \cong \Lambda^*(u_3,u_{11}) \otimes
\mathbb{C}[K_2,K_6] 
$$
$$
E_1^{*,*}[\mathfrak{f}_4] \cong \Lambda^*(u_3,u_{11},u_{15},u_{23}) \otimes
\mathbb{C}[K_2,K_6,K_8,K_{12}] 
$$
$$
E_1^{*,*}[\mathfrak{E}_6] \cong \Lambda^*(u_3,u_9,u_{11},u_{15},u_{17},u_{23}) \otimes
\mathbb{C}[K_2,K_5,K_6,K_8,K_9,K_{12}] 
$$
$$
E_1^{*,*}[\mathfrak{E}_7] \cong \Lambda^*(u_3,u_{11},u_{15},u_{19},u_{23},u_{27},u_{35})\
 \otimes
\mathbb{C}[K_2,K_6,K_8,K_{10},K_{12},K_{14},K_{18}] 
$$
$
E_1^{*,*}[\mathfrak{E}_8] \cong
$ \\ 
$
\Lambda^*(u_3,u_{15},u_{23},u_{27},u_{35},u_{39},u_{47},u_{59})
\otimes
\mathbb{C}[K_2,K_8,K_{12},K_{14},K_{18},K_{20},K_{24},K_{30}]
$
\vspace{0.1in}

In all cases, the dimension of the Lie algebra is the sum of the subscripts of the $u_i \in E_1^{0,i}$
in its cohomology.
As usual, the dual Casimirs $K_i \in E_1^{i,0}$ represent homogeneous degree $i$ adjoint invariant polynomial functions on
the Lie algebra and $K_2$ always is the quadratic form corresponding to the Killing form.
\end{thm}

The results above cover all the complex simple Lie algebras. More generally a semisimple Lie algebra is
a direct sum of simple Lie algebras and is covered by the fact that
if $\mathfrak{g}=\mathfrak{g}_1 \oplus \mathfrak{g}_2$ then 
$E_0^{*,*}[\mathfrak{g}] = E_0^{*,*}[\mathfrak{g}_1] \otimes E_0^{*,*}[\mathfrak{g}_2]$
as differential graded algebras. Furthermore the complex also decomposes as a tensor product with respect
to the commuting differential used to define the spectral sequence for the double complex. Thus one has
that
$$
E_r^{*,*}[\mathfrak{g}] = E_r^{*,*}[\mathfrak{g}_1] \otimes E_r^{*,*}[\mathfrak{g}_2]
$$
as dga's for any $r \geq 0$, i.e., the spectral sequence of the sum is the tensor product of the
spectral sequences of the factors. (These comments in fact work over any base field $\bf{k}$.)
Thus one can work out the spectral sequence for any complex semisimple Lie algebra from the results in
this section. A general complex Lie algebra $\mathfrak{L}$ fits in a short exact sequence
$$
0 \to \rad(\mathfrak{L}) \to \mathfrak{L} \to \mathfrak{g} \to 0
$$
where $\rad(\mathfrak{L})$ is the largest solvable ideal of $\mathfrak{L}$ and
$\mathfrak{g}$ is semisimple, so it would remain to treat solvable (and in particular nilpotent) Lie
algebras. Generally the cohomology of these is much more complicated, though definitely interesting
but we will not consider these much in this paper nor the corresponding $p$-groups as the treatment
would require different methods.

We have shown that for any complex semisimple Lie algebra $\mathfrak{g}$, with compact form
$\mathfrak{g}_{\mathbb{R}}$ corresponding to a compact connected Lie group $G$, that
$$
E_1^{*,*}[\mathfrak{g}]=H^*(\mathfrak{g},U(\mathfrak{g})^*)=H^*(G \times BG, \mathbb{C}).
$$
We note finally that it is well known that
$$
H^*(\Lambda BG, \mathbb{C}) \cong
H^*(G \times BG, \mathbb{C})
$$
by considering the fibration $G \simeq \Omega BG \to \Lambda BG \to BG$.

A quick comment regarding related physics: For $\mathfrak{g}$ a complex semisimple Lie algebra we have
seen that the Casimir algebra will be a polynomial algebra on $h$ generators where $h$ is the rank of the Lie algebra.
In general this means the $h$ linearly independent, commuting operators represented by the Cartan algebra $\mathfrak{h}$
will commute with an additional $h$ Casimir operators which generate the Casimir algebra. This gives a total
of $2h$ commuting and simultaneously measurable operators coming from the Cartan algebra and the Casimir
generators, which form (part of) a good system of variables in the underlying physical system.

Thus for example $\mathfrak{sl}_3(\mathbb{C})$ which plays a role in quark/anti-quark theory as well
as meson/baryon classification will have two variables coming from the Cartan algebra, the corresponding
measurements of which will correspond to the root weights described in later sections of this paper and
an additional two Casimir generators dual to the Chern classes $c_2, c_3$, to give a total
of 4 commuting operators from this discussion. For a nice mathematical 
article on the related physics, see \cite{BH}.

\section{Exponent theory for finite $p$-groups}

In this section we recall the exponent theory of finite groups. Many of these results were
introduced in work of A. Adem and Ian Leary (see \cite{Adem} and \cite{Le}) while the fact that $e_{\infty}(P) \neq e(P)$ was
first shown in \cite{Pk}. We summarize the main results regarding these exponents in this section and include a
few more for the category of $p$-groups we will be dealing with.

In this section let $G$ be a finite group. It is well known that the integral cohomology groups
$H^n(G,\mathbb{Z}), n > 0$ are finite abelian groups and $|G| \cdot H^n(G,\mathbb{Z}) = 0$ for all $n > 0$.
Let $\bar{H}(G) = \oplus_{n=1}^{\infty} H^n(G,\mathbb{Z})$ denote the total reduced
integral cohomology of $G$. We define the following exponents:

\begin{defn}(Exponent Definitions) \\
$exp(G) = min \{n \geq 1 | g^n=e \text{ for all } g \in G \}$. \\
$e_{\infty}(G)= min \{ n \geq 1 | n\bar{H}(G) \text{ is a finite set} \}$. \\
$e(G)=min \{ n \geq  1 | n\bar{H}(G)=0 \}$. \\
\end{defn}

The following propositions are discussed in detail in \cite{Pk} so we will not reproduce proofs here.
As mentioned before many of them occur in previous independent work of Adem and Leary.
Failure of various propositions for exactly one of $e_{\infty}$ or $e$ were shown in \cite{Pk}.
We write $n | m$ for ``the integer $n$ divides the integer $m$''.

\begin{pro}(Basic Relationship between Exponents)
If $P$ is a finite $p$-group then
$$
exp(P) \text{ } \Big{|} \text{ } e_{\infty}(P) \text{ } 
\Big{|} \text{ } e(P) \text{ } \Big{|} \text{ } |P|
$$
and furthermore there are examples of $p$-groups that show that these four quantities
are different in general.
\end{pro}

\begin{pro}(Subgroups)
Let $P_1 \leq P_2$ be finite $p$-groups. Then \\
$e_{\infty}(P_1) | e_{\infty}(P_2)$, $exp(P_1) | exp(P_2)$ and $|P_1| 
\Big{|} |P_2|$. However
$e(P_1) \not| \text{ } e(P_2)$ in general.
\end{pro}

\begin{pro}(Cyclic groups and Symmetric Sylow subgroups)
Let $C$ be a finite cyclic group and $S(m)$ be the Sylow $p$-subgroup of
the symmetric group on $m$ letters. \\
(1) $exp(C)=e_{\infty}(C)=e(C)=|C|$ \\
(2) $p^n=exp(S(p^n))=e_{\infty}(S(p^n)) = e(S(p^n))$.
\end{pro}

\begin{pro}(Products)
If $G=G_1 \times \dots \times G_n$, then
$$exp(G)=lcm_{i=1,\dots,n}\{ exp(G_i) \},$$
$$e_{\infty}(G)=lcm_{i=1,\dots,n}\{ e_{\infty}(G_i) \},$$
$$e(G)=lcm_{i=1,\dots,n}\{ e(G_i) \}$$
where $lcm$ stands for least common multiple and can be replaced with $max$
in the case of $p$-groups.
Thus if $p^n$ is the largest power of $p$ less than or equal to $m$, then
$$
p^n = exp(S(m))=e_{\infty}(S(m))=e(S(m)).
$$
\end{pro}
\begin{proof}
The first statement follows from an application of K\"unneth's Theorem. The second follows from
the structure of Sylow $p$-subgroups of symmetric groups. More precisely, if the $p$-adic expansion of
$m$ is given by $m=a_0 + a_1p + a_2p^2 + \dots + a_np^n$ then $S(m)$ is the direct product
of $a_0$ copies of $S_1$, $a_1$ copies of $S(p) \dots$, up to $a_n$ copies of $S(p^n)$.
\end{proof}

\begin{thm}(Exponent Characterizations)
Let $P$ be a $p$-group. \\
(1)[Nakayama-Rim] If $e_{\infty}(P)=1$ then $P=\{e\}$ and $e(P)=1$ also. \\
(2)[Adem] If $e_{\infty}(P)=p$ then $P$ is elementary abelian and $e(P)=exp(P)=p$ also. \\
(3)[Pakianathan] For odd primes $p$, the group $G(\mathfrak{sl}_2(\mathbb{F}_p))$ has
$exp(P)=e_{\infty}(P)=p^2$ and $e(P)=p^3$ while $|P|=p^6$. Thus $e_{\infty}(P) \neq e(P)$ in general.
\end{thm}

\begin{pro}(Faithful actions on finite sets)
Let $P$ be a finite $p$-group. \\
(1) If $P$ has a faithful action on a set of size $m$ and $p^n$ is the largest power of
$p$ less than or equal to $m$ then $e_{\infty}(P) | p^n$. Thus $e_{\infty}(P)$ is a lower bound
on the size of a set on which $P$ can act faithfully. \\
(2) If the intersection of all subgroups of $P$ of index $p^n$ is trivial then
$e_{\infty}(P) | p^n$. This statement does not hold with $e(P)$ replacing $e_{\infty}(P)$.
\end{pro}

\begin{thm}(Browder's exponent theorem and consequences)
Let $G$ be a finite group and $X$ a finite dimensional {\bf free} $G-CW$-complex, with
homologically trivial $G$-action. Then
$$
|G| \text{ } \Big{|} \text{ } 
\prod_{n=2}^{\infty} exp(H^n(G,H_{n-1}(X))) \text{ } \Big{|} 
\text{ } e(G)^{s(X)}
$$
where $s(X)$ is the number of positive dimensions in which the integral homology of $X$
is nontrivial.

A corollary of this is if $E$ is an elementary abelian group acting freely and homologically
trivially on a product of $N$ equal dimensional spheres then $rank(E) \leq  N$.
\end{thm}
\begin{proof}
The short and elegant proof of this theorem and corollary can be found in \cite{Br}.
The second division comes from the definition of $e(G)$. It is unknown to the authors at this time
whether the theorem would hold for $e_{\infty}(G)$ but we see no reason why it should.
\end{proof}

The two fundamental exponents can be recharacterized in terms of the behaviour of the Bockstein
spectral sequence of the $p$-group $P$. A discussion of this spectral sequence can be found in many
places in the literature and is described for example in \cite{BrP}. Recall that when applied to the
classifying space of a finite $p$-group $P$ one has that
$B_1^*=H^*(P,\mathbb{F}_p)$ and $\beta_1$ is the Bockstein. Nonzero permanent cycles in $B_r$ represent
elements of order $p^r$ or greater in the integral cohomology of $P$ and $B_{\infty}$ is the cohomology
of a point. Thus one has the following:

\begin{lem}(Bockstein Spectral Sequence Characterization)
If $P$ is a finite $p$-group, then:\\
(1) $e_{\infty}(P)=p^s$ if and only if $B_{s+1}^*$ is the first page of the Bockstein spectral sequence
whose total complex is finite. \\
(2) $e(P)=p^k$ if and only if $B_{k+1}^*$ is the first page of the Bockstein spectral sequence whose
total complex is concentrated in degree 0. (equivalently is 1-dimensional).
\end{lem}

Finally for $p$-groups $G(\mathfrak{L})$ corresponding to $\mathbb{F}_p$-Lie algebras 
$\mathfrak{L}$ as in
\cite{BrP}, where $0 \to \mathfrak{L} \to G(\mathfrak{L}) \to \mathfrak{L} \to 0$, we have the following:

\begin{pro}(Exponent bounds for $G(\mathfrak{L})$-$p$-groups.)
Let $\mathfrak{L}$ be a $\mathbb{F}_p$ Lie algebra such that the intersection of
index $p^{\beta}$ sub Lie algebras is zero. Then
$e_{\infty}(G(\mathfrak{L})) | p^{2\beta}$.
\end{pro}
\begin{proof}
In the construction of the $p$-groups $G(\mathfrak{L})$ one has in general that
$|G(\mathfrak{L})| = |\mathfrak{L}|^2$ and so an index $p^{\beta}$ sub Lie algebra of
$\mathfrak{L}$
gives a corresponding index $p^{2\beta}$ subgroup of $G(\mathfrak{L})$. Since the zero
Lie subalgebra corresponds to the trivial group, the rest follows from the previous theorems on faithful
group actions.
\end{proof}

As the reader can see, these exponents play an interesting role in the cohomology of groups limited
only by the difficulty in carrying out complete calculations of the integral cohomology of groups.
In some results the integral cohomology exponent $e(G)$ arises while in others the asymptotic
exponent $e_{\infty}(G)$ arises. Since these are not equal in general it is useful to define the extent
to which they differ, and the following concept helps do that.

\begin{defn}(Exceptional Torsion elements and Exceptional Dimension)
Let $P$ be a finite $p$-group. \\
By definition $e_{\infty}(P) \cdot \bar{H}^*(P,\mathbb{Z})$ is a finite graded ring.
Let the {\bf exceptional dimension} of $P$, denoted by $ED(P)$, be defined as the largest
dimension in which $e_{\infty}(P) \cdot H^*(P,\mathbb{Z})$ is nonzero. Note that $ED(P)$ is always a finite
nonnegative integer and is zero if and only if $e_{\infty}(P)=e(P)$.

Any element $x \in H^*(P,\mathbb{Z})$ with $0 < * \leq ED(P)$ such that $e_{\infty}(P) \cdot x \neq 0$
is called an {\bf exceptional torsion element}. Note if $ED(P) > 0$ then an exceptional torsion element
always exists in $H^{ED(P)}(P,\mathbb{Z})$ by definition and no exceptional torsion elements exist
in dimensions higher than $ED(P)$, i.e., $e_{\infty}(P) \cdot \oplus_{n=ED(P)+1}^{\infty}H^{n}(P,\mathbb{Z})=0$.

\end{defn}

There is no bound on exceptional dimension in general, i.e., it can be arbitrarily high as the next
example shows:

\begin{thm}
Let $P$ be the $N$-fold direct product of $G(\mathfrak{sl}_2)$'s.
Then $e_{\infty}(P)=p^2$, $e(P)=p^3$. $|P|=p^{6N}$ and $ED(P)=N \cdot ED(G(\mathfrak{sl}_2)) > 0$. \\
Thus $\lim_{N \to \infty} ED(P) = \infty$.
\end{thm}
\begin{proof}
Follows from the results in this section together with the fact that \\
$e_{\infty}(G(\mathfrak{sl}_2))=p^2$
and $e(G(\mathfrak{sl}_2))=p^3$ proven in \cite{Pk}. Note that the Bockstein spectral sequence of $P$
is the $N$-fold tensor product of the Bockstein spectral sequence of $G(\mathfrak{sl}_2)$ (this means at
each page it's the tensor product as dga's.). Also note that $ED(G(\mathfrak{sl}_2))$ is the maximum
nonzero dimension of $B_3^*(G(\mathfrak{sl}_2))$ while $ED(P)$ is the maximum nonzero dimension of
$B_3^*(P) = \otimes_{j=1}^N B_3^*(G(\mathfrak{sl}_2))$ from which the results for essential
dimension follow.
\end{proof}

The last example is not so strong in the sense that $\log_p(|P|)$ has to increase in order to get the
increase in $ED(P)$. In the next sections we will study the Bockstein spectral sequence for
$G(\mathfrak{sl}_2)$ again more carefully using the machinery of Lie algebras we have mentioned in previous
sections. More analysis shows in fact the following stronger facts: \\
(1) $ED(G(\mathfrak{sl}_2(\mathbb{F}_p))) \geq 2p-2$ for odd primes $p$.
Thus the essential dimension grows with the prime of
definition of $G(\mathfrak{sl}_2(\mathbb{F}_p))$ and is therefore, in some sense, not even bounded for the
fixed ``{\bf group scheme}'' $G(\mathfrak{sl}_2(-))$. \\
(2) Furthermore we will see that the ``exceptional torsion elements correspond to the characteristic
zero contribution of the Lie algebra scheme $\mathfrak{sl}_2(-)$ and has Krull dimension given
by the rank of the Lie algebra (scheme). Then in the vicinity of the essential dimension a ``char 0 to char p''
phase transition occurs and the asymptotic torsion has a larger Krull dimension equal to
the dimension of the underlying Lie algebra.  \\
(3) In this sense, the word ``exceptional torsion'' transition to ``asymptotic torsion'' can be seen
in some situations at least, to correspond to a char 0 to char p phase transition in corresponding
Lie algebra schemes.

This should become clearer after a few examples.

\section{Non-abelian Lie algebra of dimension 2}
We will work out an easy example which will introduce
the useful concept of weight stratification of the spectral sequence.

Let $\mathfrak{g}$ be the non-abelian Lie algebra (scheme) of dimension 2 with
basis $x_h, x_e$ and commutator given by $[x_e,x_h]=x_e$. Note $\mathfrak{g}$ can be defined over
any base ring {\bf k} and we will write $\mathfrak{g}(\bf{k})$ when we want to make the base ring clear.
Let $h,e$ denote the dual basis in $\mathfrak{g}^*$ and $H, E$ suspended copies in $Poly^1(\mathfrak{g}^*)$.
Then
$$
E_0^{*,*}[\mathfrak{g}] = \Lambda^*(h,e) \otimes {\bf k}[H,E]
$$
with differential given by
$d_0(h)=d_0(H)=0$ and $d_0(e)=he, d_0(E)=Eh-He$. The (anti)commuting differential $d_1$ giving rise
to the spectral sequence is given by $d_1(h)=H, d_1(e)=E, d_1(E)=d_1(H)=0$.
Furthermore by \cite{BrP}, $(E_0,d_0)=(H^*(G(\mathfrak{g}),\mathbb{F}_p),\beta)$ where $\beta$ is the
Bockstein operator when $\bf{k}=\mathbb{F}_p$.

Now if we assign $h, H$ to have weight $0$ regarded as an integer and $e, E$ to have weight $1$,
and extend weight to $E_0^{*,*}[\mathfrak{g}]$ so that the weight of a product is the sum of the weights
of the individual factors, then it is easy to see that $d_0$ and $d_1$ preserve weight and
hence so do all differentials of the spectral sequence. Thus the spectral sequence decomposes into a
direct sum of spectral sequences given by isolating terms of specific weight:
$$
E_r^{*,*}[\mathfrak{g}] = \bigoplus_{w \in \mathbb{N}} E_r^{*,*}[w]
$$
We will see this occurs in general in the next section, and in the case of semisimple Lie algebras,
the sum will be indexed by the {\bf root lattice} of the corresponding complex semisimple Lie algebra
which will be a free abelian group of rank equal to the rank of the Lie algebra. The weight stratification 
is overkill in this example, but becomes necessary in more complicated computations.

Let us first look at the weight 0 contribution; since the weights in this example are all nonnegative
integers, it is easy to see that
$$
E_0^{*,*}[0] = \Lambda^*(h) \otimes {\bf k}[H],
$$
with $d_0=0$ identically and $d_1(h)=H$. Thus $E_0^{*,*}[0]=E_1^{*,*}[0]$ and
$E_2^{*,*}[0]=E_{\infty}^{*,*}[0]$ is the cohomology of a point.

Now consider a nonzero weight $n > 0$. Then $E_0^{*,*}[n]$ is a right $E_0^{*,*}[0]$-module
and since $d_0$ is a derivation which vanishes on $E_0^{*,*}[0]$, we have that
$$
d_0: E_0^{*,*}[n] \to E_0^{*,*}[n]
$$
is a right $E_0^{*,*}[0]$-module map. Furthermore $E_0^{*,*}[n]$ is a free $E_0^{*,*}[0]$-module
of rank 2 with (ordered) basis \{ $eE^{n-1}, E^n$ \}. Thus we can represent $d_0$ on this weight $n$-piece
as a $2 \times 2$ matrix with entries in $E_0^{*,*}[0]=\Lambda^*(h) \otimes {\bf k}[H]$.

Computing we have:
$$
d_0(eE^{n-1}) = (he)E^{n-1} - e(n-1)E^{n-2}(Eh-He) = eE^{n-1}(-nh)
$$
and
$$
d_0(E^n)=nE^{n-1}(Eh-He)=eE^{n-1}(-nH) + E^n (nh)
$$
and hence
$$
d_0(eE^{n-1}\gamma + E^n \delta)) = eE^{n-1}\gamma' + E^n \delta'
$$
for any $\gamma, \delta \in E_0^{*,*}[0]$ with
$$
\begin{bmatrix}
\gamma' \\ \delta'
\end{bmatrix}
= \begin{bmatrix}
-nh & -nH \\
0 & +nh
\end{bmatrix}
\begin{bmatrix}
\gamma \\ \delta
\end{bmatrix},$$
and thus $d_0: E_0^{*,*}[n] \to E_0^{*,*}[n]$ is represented by the matrix
$\begin{bmatrix}
-nh & -nH \\
0 & +nh
\end{bmatrix}$ with respect to the ordered basis mentioned above.

Restricting to the case where ${\bf k}$ is a field, there are now two cases
to consider: \\
Case 1: $n=0$ in ${\bf k}$. \\
In this case
$d_0 = 0$ on $E_0^{*,*}[n]$ and $E_0^{*,*}[n]=E_1^{*,*}[n]$. 
Direct computation shows $d_1(eE^{n-1})=E^n$ and we conclude that $E_2^{*,*}[n]=E_{\infty}^{*,*}[n]=0$.  \\
\\
Case 2: $n \neq 0$ in the field ${\bf k}$. \\
Note that a typical element in $E_0^{*,*}[0]=\Lambda^*(h) \otimes {\bf k}[H]$ can be written
in the form $\gamma_1 + h\gamma_2$ where $\gamma_i$ are ${\bf k}$-polynomials in $H$. If
$\begin{bmatrix} \gamma_1 + h\gamma_2 \\ \delta_1 + h \delta_2 \end{bmatrix}$ is in the kernel of the matrix then
$$
\begin{bmatrix}
0 \\ 0
\end{bmatrix}
= \begin{bmatrix}
-nh & -nH \\
0 & nh
\end{bmatrix}
\begin{bmatrix}
\gamma_1 + h\gamma_2 \\ \delta_1 + h\delta_2
\end{bmatrix}
=
\begin{bmatrix}
-nh(\gamma_1 + H\delta_2) -nH\delta_1 \\
+nh\delta_1
\end{bmatrix}.
$$
Since $n$ is a unit in ${\bf k}$ in this case, it follows quickly that $\delta_1=0$ and $\gamma_1+H\delta_2=0$.
Thus the typical element in the kernel of $d_0$ looks like $\begin{bmatrix} -H\delta_2 + h\gamma_2 \\
h\delta_2 \end{bmatrix}$. However such an element is in the image of $d_0$ as
$$
\begin{bmatrix}
-nh & -nH \\
0 & +nh
\end{bmatrix}
\begin{bmatrix}
\frac{1}{n}(-\gamma_2) \\ \frac{1}{n}(\delta_2)
\end{bmatrix}
= \begin{bmatrix} -H\delta_2 + h\gamma_2 \\
h\delta_2
\end{bmatrix}.
$$
Thus the cohomology of the complex $(E_0^{*,*}[n],d_0)$ is identically zero, i.e.,
$E_1^{*,*}[n]=E_{\infty}^{*,*}[n]=0$.

Summarizing to the main fields of interest $\mathbb{C}$ and $\mathbb{F}_p$ we get:
\begin{thm}[Non-abelian Lie algebra of dimension 2]
Let $\mathfrak{g}$ be the ${\bf k}$-Lie algebra with basis $x_h, x_e$ and bracket given by
$[x_e,x_h]=x_e$. Then $E_0^{*,*}=\Lambda^*(h,e) \otimes {\bf k}[H,E]$.
For ${\bf k}$ a field of characteristic zero we have
$$
E_1^{*,*}=H^*(\mathfrak{g},U(\mathfrak{g}^*)) = E_1^{*,*}[0]=\Lambda^*(h) \otimes {\bf k}[H]
$$
and $E_2^{*,*}=E_{\infty}^{*,*}$ is the cohomology of a point. Note the nonzero contribution to
$E_1^{*,*}$ is only from weight $0$ and that the Krull dimension of
$E_1$ is the rank of the Lie algebra which is one.

For ${\bf k}$ a field of characteristic $p$, we have
$$
E_1^{*,*}=H^*(\mathfrak{g},Poly(\mathfrak{g}^*)) = \Lambda^*(h, eE^{p-1}) \otimes {\bf k}[H, E^p]
$$
with $d_1(h)=H$ and $d_1(eE^{p-1})=E^p$. Note the nonzero contribution to $E_1^{*,*}$ is only from weights
that are congruent to $0$ modulo $p$. Also the Krull dimension is now two, which is the dimension of the
Lie algebra. The phase transition
from the characteristic $0$ answer first occurs in dimension where $eE^{p-1}$ lies
i.e., in total dimension $2p-1$ (Hodge degree $p-1$).
\end{thm}

Following \cite{BrP}, this yields the following corollary:
\begin{cor}
Let $\mathfrak{g}$ be the nonabelian Lie algebra above defined over $\mathbb{F}_p$, $p$ an odd prime.
Let $G(\mathfrak{g})$ be the corresponding $p$-group of order $p^4$. Then
$E_r^{*,*}=B_{r+1}^{*}$ for all $r$ where $B_r^*$ is the Bockstein spectral sequence of the group.
Thus
$$
E_1^{*,*}=B_2^*=\Lambda^*(h,eE^{p-1}) \otimes \mathbb{F}_p[H, E^p]
$$
and $E_2^{*,*}=B_3^*$ is the cohomology of a point.
Thus $e_{\infty}(G(\mathfrak{g}))=e(G(\mathfrak{g}))=p^2=exp(G(\mathfrak{g}))$.
\end{cor}
\begin{proof}
As mentioned before it was shown in \cite{BrP} that $(E_0^{*,*},d_0)=(B_1^*,\beta)$ in general
and hence that $E_1^{*,*}=B_2^*$. Using comparisons to the cyclic $p^2$-subgroups generated by the kernel
of $e$ and $h$ respectively (note $\Lambda^1(h,e)=H^1(G(\mathfrak{g}),\mathbb{F}_p)=
\Hom(G(\mathfrak{g}),\mathbb{F}_p)$), one sees that one must have $\beta_2(h)=H$ and $\beta_2(eE^{p-1})=E^p$
at least up to nonzero scalars. This shows that $B_3^*$ has the cohomology of a point and that the two
spectral sequences coincide on all pages. The rest follows immediately.

\end{proof}

Even though $e=e_{\infty}$ in this example, note that there is still a transition in the higher torsion $B_2^*$
from behaving like the characteristic zero contribution (krull dimension one) before Hodge degree $p-1$
and then changing to a Krull dimension two behaviour after Hodge degree $p$. Thus for $E_1^{*,*}$
the contribution $\Lambda^*(h,H)$ is universal while the contribution $\Lambda^*(eE^{p-1}) \otimes {\bf k}[E^p]$
only occurs in characteristic $p$.

We will see that this behaviour is relatively generic and occurs also in higher dimensional examples.

\section{Weight Stratification of the Spectral Sequence $E_r^{*,*}$}

Let $\mathfrak{g}$ be any complex semisimple Lie algebra and let $\mathfrak{h}$ be a Cartan subalgebra.
The adjoint action of $\mathfrak{h}$ on $\mathfrak{g}$ is (simultaneously) diagonalizable and decomposes
$\mathfrak{g}$ as
$$
\mathfrak{g}=\mathfrak{h} \oplus (\oplus_{\alpha \in R} \mathfrak{g}_{\alpha})
$$
where $R$ is the set of roots of the Lie algebra. Recall a root $\alpha$ with its root vector $v$ satisfies
$$
[h,v]=\alpha(h)v
$$
for all $h \in \mathfrak{h}$. It turns out that for a complex semisimple Lie algebra, there always exists a choice of 
basis for $\mathfrak{h}$ such that the values of $\alpha$ are integral on the $\mathbb{Z}$-span of this basis, and so
each root $\alpha$ can be viewed in $\mathbb{Z}^n$ where $n$ is the rank of the Lie algebra, i.e., the dimension
of its Cartan subalgebra. The correspondence comes by taking such a basis $\{ h_1,\dots,h_n \}$ of $\mathfrak{h}$
and sending $\alpha$ to $(\alpha(h_1),\dots,\alpha(h_n))$.

The Cartan algebra itself corresponds to $v$ such that $[h,v]=0=0v$ for all
$h \in \mathfrak{h}$, so we will say it has weight $0$ (though $0$ is usually not considered a root
for various reasons).

For a semisimple Lie algebra the set of roots is always finite, and if $\alpha$ is a root, then so
is $-\alpha$. Furthermore, the sublattice of $\mathbb{Z}^n$ spanned by the roots is of rank $n$ also
and is called the {\bf root lattice}.

We now extend this root decomposition to a weight decomposition of $E_0^{*,*}=\Lambda^*(\mathfrak{g}^*)
\otimes Poly(\mathfrak{g}^*)$ as follows. Decompose $\Lambda^1(\mathfrak{g}^*)=\mathfrak{g}^*=
Poly^1(\mathfrak{g}^*)$ by declaring something to have weight $\alpha$ if it is dual to a (nonzero) element
of weight $-\alpha$ in $\mathfrak{g}$. Then extend this weighing to all {\bf homogeneous elements} of $E_0^{*,*}$ by declaring
the weight of a product to be the sum of the weights. It is not hard to see that this decomposes
$E_0^{*,*}$ into a direct sum of ``weight subspaces'' where the weights are any nonnegative integer combination
of the root weights, i.e., are the elements of the {\bf root lattice}.

Though the minus sign is not really important, the reason the dual of a weight $\alpha$ element is
said to have weight $-\alpha$ is that if $v \in \mathfrak{g}$ is a root vector, 
$[h, v]=\alpha(h)v$ for all $h \in \mathfrak{h}$, and
$\phi \in \mathfrak{g}^*$ is a dual functional to $v$ (which vanishes on any root vector of different weight), 
then in the dual adjoint action,
$$
h \cdot \phi(s)=\phi([s,h])=-\phi([h,s])
$$
and so we have $h \cdot \phi = -\alpha(h)\phi$ for all $h \in \mathfrak{h}$. Thus it is reasonable to define
$\phi$ to have weight $-\alpha$. (However, the weight stratification and all the important properties
we need would still hold if we dropped the minus sign.)

Finally, it is well known that
$[\mathfrak{g}_{\alpha},\mathfrak{g}_{\beta}] \subseteq \mathfrak{g}_{\alpha + \beta}$. From this it is clear
from the definition of $d_0$ that it preserves weight as it does on the generating set.
Note $d_0(x_k)=-\sum_{i < j}^n c_{ij}^k x_i x_j$
and it is clear that $c_{ij}^k=0$ unless the weight of $x_k$ is the sum of the weights of $x_i$ and $x_j$.
Since $d_1$ also preserves weight trivially, we see since all differentials are derived from $d_0$ and $d_1$
that the spectral sequence respects the weight decomposition.

Thus letting $\Lambda(R)$ denote the root lattice we have a direct sum decomposition of spectral sequences
$$
E_r^{*,*}[\mathfrak{g}] = \bigoplus_{\alpha \in \Lambda(R)} E_r^{*,*}[\alpha].
$$

Note it is not necessary for a Lie algebra to be semisimple to have a weight decomposition. Indeed the example
in the last section is not semisimple. All that is needed is a decomposition of $\mathfrak{g}$ into
$\mathfrak{g}_{\alpha}$ over some abelian semigroup index $R \subseteq \mathbb{Z}^n$ with
$[\mathfrak{g}_{\alpha}, \mathfrak{g}_{\beta}] \subseteq \mathfrak{g}_{\alpha + \beta}$. Of course
the corresponding weight decomposition of $E_0^{*,*}$ will use as index set the set of nonnegative integer
combinations of $R$ which might not be a lattice if $-R \neq R$. This was the case in the example in the last
section as we only got a ray and not a lattice in $\mathbb{Z}$ as our weights.

Over a field of characteristic zero, $E_1^{*,*}=H^*(\mathfrak{g},U(\mathfrak{g}^*))=H^*(\mathfrak{g}) \otimes
\Gamma$ where $\Gamma$ are the adjoint invariant polynomial functions on $\mathfrak{g}$. Note
that if $f$ is an adjoint invariant polynomial function, then by definition $[g,f]=0=0f$ for all
$g \in \mathfrak{g}$ and so in particular for all $g \in \mathfrak{h}$. Thus an adjoint invariant polynomial
is by necessity of weight zero. So $\Gamma \subseteq E_1^{*,*}[0]$. However we have previously seen
that the generators in $H^*(\mathfrak{g})$ transgress to elements in $\Gamma$ in this spectral sequence
and so they also must have weight 0. Thus we conclude for a complex semisimple Lie algebra,
$H^*(\mathfrak{g},U(\mathfrak{g})^*)=E_1^{*,*}=E_1^{*,*}[0]$ and so only the weight zero contributions
survive in the $E_1$-page and beyond.

Over a field of characteristic $p$, it is still true that $E_1^{*,0}$ consists of invariant polynomials
and hence must have eigenvalue $0$ under the adjoint action of $\mathfrak{g}$ on itself. However since
our original weight decomposition used integral weights, we can only conclude that the weight of an invariant
polynomial is congruent to $0$ modulo $p$ as all those weights will be zero in the field.
Furthermore not every element of the spectral sequence transgresses to the line of invariant polynomials in
general but those which do will have weight congruent to $0$ mod $p$ of course.

We summarize this section's discussion in:

\begin{thm}(Weight stratification)
\label{thm: weight}
Let $\mathfrak{g}$ be a complex semisimple Lie algebra. Let $\mathfrak{g}_{\mathbb{Z}}$ be a corresponding
integral Lie algebra (always exists by Cartan-Serre basis). We can then get a corresponding lie algebra
$\mathfrak{g}_{\bf k}$ over any ring of definition ${\bf k}$.

The root decomposition of $\mathfrak{g}$ induces a weight decomposition of spectral sequences:

$$
E_r^{*,*}[\mathfrak{g}_{\bf k}] = \bigoplus_{\alpha \in \Lambda(R)} E_r^{*,*}[\alpha]
$$
where $\Lambda(R)$ is the root lattice of $\mathfrak{g}$ and is a free abelian group of rank equal
to the rank of $\mathfrak{g}$.

If ${\bf k}$ is a field of characteristic zero then
$$
E_1^{*,*}[\mathfrak{g}_{\bf k}]=E_1^{*,*}[0]=H^*(\mathfrak{g},U(\mathfrak{g})^*) ;
$$
i.e., only weight zero terms contribute anything to the $E_1$-page or beyond.

If ${\bf k}$ is a field of characteristic $p$ then
$E_1^{*,0}$ consists of polynomials of weight zero modulo $p$. Furthermore any element that transgresses 
to the polynomial line also has to have weight zero modulo $p$.
\end{thm}

Though this decomposition might seem technical, as the example in the last section shows, it helps organize
computations very effectively.

\section{Witt Lie algebras and Algebraic de Rham Cohomology}
\label{section: DeRham}
Since the computations in prime characteristic will be more difficult by far than in characteristic
zero, we will have to bring in the additional important computational viewpoint of algebraic $D$-modules.
We use only the most basic algebraic settings of this theory and recall the essential definitions below, 
but for an excellent introduction in the basic theory the reader is referred to \cite{Co}.
A basic discussion about a large family of infinite semisimple Lie algebras related to the classical
Witt Lie algebra can be found in \cite{NP}.

Let ${\bf k}$ be a field and $P={\bf k}[x_1,\dots,x_n]$ be a polynomial algebra in $n$ variables.
Formal differentiation $\frac{\partial}{\partial x_i}$ and multiplication by $x_i$ define
operators i.e., elements of $\Endo_{\bf k}(P,P)$. The operators $\frac{\partial}{\partial x_i}$ and multiplication
operators generate under composition a formal operator algebra on the polynomial algebra
called the {\bf Weyl algebra in $n$ variables}. A typical element is a formal differential
operator with polynomial coefficients, e.g. $x_1x_2 \frac{\partial^2}{\partial x_1 \partial x_2} -
x_3\frac{\partial}{\partial x_2}$.

If we define the usual commutator Lie bracket of operators $[\psi, \tau]=\psi \circ \tau - \tau \circ \psi$
then the multiplication operators and formal differentiation operators $\frac{\partial}{\partial x_i}$, 
$1 \leq i \leq n$, can be used to define a 
Lie subalgebra of the Weyl algebra called the
{\bf Witt algebra in $n$ variables} which 
will be denoted by $\mathbb{W}_n({\bf k})$.
The typical element of the Witt Lie algebra looks like
$\sum_{i=1}^n P_i \frac{\partial}{\partial x_i}$ where $P_i$ are polynomials, i.e.,
is a first order formal differential operator.
Equivalently the Witt Lie algebra can be thought of as the Lie algebra of polynomial vector fields on
affine space ${\bf k}^n$. The Lie bracket is given by
$$
\left[P\frac{\partial}{\partial x_i}, Q\frac{\partial}{\partial x_j}\right]= \left(P\frac{\partial Q}{\partial x_i}
\right) \frac{\partial}{\partial x_j}
-\left(Q\frac{\partial P}{\partial x_j}\right) \frac{\partial}{\partial x_i}.
$$

Before we see how the Witt Lie algebra plays a role in our computations, we have to look at the dga
$(E_0^{*,*}[\mathfrak{g}],d_0)$ as an algebraic De Rham complex.

Let
$$\Lambda^k_{DR}(\mathfrak{g})=E_0^{*,k}=\Lambda^k(\mathfrak{g}^*) \otimes {\bf k}[\mathfrak{g}^*]
=\Lambda^k(\mathfrak{g}^*) \otimes P,
$$
where we have let $P$ denote the polynomial algebra ${\bf k}[\mathfrak{g}^*]=Poly(\mathfrak{g}^*)$.
The differential $d_0$ then defines a formal algebraic De Rham complex
$$
0 \to \Lambda_{DR}^0 \to \Lambda_{DR}^1 \to \dots \to \Lambda_{DR}^n \to 0, 
$$
where $n$ is the dimension of $\mathfrak{g}$. Recall elements in $Poly(\mathfrak{g}^*)$ can be thought
of as formal polynomial functions on $\mathfrak{g}$ (formal in the case of finite fields) and so
the elements in $\Lambda^m_{DR}$ can be thought of as polynomial $m$-forms on $\mathfrak{g}$.

Let us call the cohomology of this complex 
the De Rham cohomology of $\mathfrak{g}$, and denote it $H^*_{DR}(\mathfrak{g})$.
Note the (De Rham) cohomology is then
$$
H^m_{DR}(\mathfrak{g})=E_1^{*,m}[\mathfrak{g}]=H^m(\mathfrak{g}, Poly(\mathfrak{g}^*)).
$$

Although, this is essentially just rephrasing, the viewpoint will be useful when we consider
$\mathfrak{g}=\mathfrak{sl}_2(\mathbb{F}_p)$, which we do next as an explicit example.
Recall in this case $E_0^*[\mathfrak{sl}_2(\mathbb{F}_p)]=\Lambda^*(h,e,f) \otimes \mathbb{F}_p[H,E,F]$
is the cohomology of the $p$-group $G(\mathfrak{sl}_2(\mathbb{F}_p))$ and $d_0$ is the Bockstein $\beta$.

Here we are using the same generators as were used in the previous discussion of $\mathfrak{sl}_2(\mathbb{C})$.
Since we will be using the weight stratification a lot in the computations, we will rename the generators
now.

The generators $h$ and $H$ are dual to the Cartan algebra element
$v_h=\begin{bmatrix} 1 & 0 \\ 0 & -1 \end{bmatrix}$ and so have weight $0$. We will denote them by
$x_0$ and $y_0$ respectively now.
The generators $e$ and $E$ are dual to the weight 2 element
$v_e=\begin{bmatrix} 0 & 1 \\ 0 & 0 \end{bmatrix}$ and so have weight $-2$.
We will denote them by $x_{-}$ and $y_{-}$ respectively now.
The generators $f$ and $F$ are dual to the weight $-2$ element
$v_f=\begin{bmatrix} 0 & 0 \\ 1 & 0 \end{bmatrix}$ and so have weight $+2$. We will denote
them by $x_{+}$ and $y_{+}$ respectively now.

Thus explicitly $H^*(G(\mathfrak{sl}_2(\mathbb{F}_p)),\mathbb{F}_p)=E_0^{*,*}
=\Lambda^*(x_0,x_{-},x_{+}) \otimes \mathbb{F}_p[y_0,y_{-},y_+]$.
Since $[v_h,v_e]=2v_e, [v_h,v_f]=-2v_f$ and $[v_e,v_f]=v_h$, the
(Bockstein) differential $d_0=\beta$ can be obtained from equation~\ref{eqn: d0formula} in Section~\ref{section: SpectralSequence} 
and is given by

\begin{align*}
\begin{split}
\beta(x_0)&=x_{+}x_{-} \\
\beta(x_{+})&=2x_{0}x_{+} \\
\beta(x_{-})&=-2x_0x_{-} \\
\beta(y_0)&=y_{-}x_{+}-y_{+}x_{-} \\
\beta(y_{+})&=-2y_{0}x_{+}+2y_{+}x_{0} \\
\beta(y_{-})&=2y_{0}x_{-}-2y_{-}x_{0}
\end{split}
\end{align*}
Here anything with a subscript of zero has weight zero, a negative subscript has weight $-2$ and a
positive subscript has weight $+2$. Note the root lattice of $\mathfrak{sl}_2$ is $2\mathbb{Z}$.

Since $\mathfrak{sl}_2(\mathbb{F}_p)$ is 3-dimensional, we can make the following identifications. \\
We can identify $\Lambda^0_{DR}(\mathfrak{sl}_2(\mathbb{F}_p))$ with the polynomial algebra
$P=\mathbb{F}_p[y_0,y_{-},y_{+}]$. \\
We can identify a typical element $f_0x_0 + f_-x_- + f_+x_+$ of $\Lambda^1_{DR}(\mathfrak{sl}_2(\mathbb{F}_p))$
with $\hat{f}=(f_0,f_-,f_+) \in P^3$. We will view $\hat{f}$ is a vector-valued polynomial function. \\
We can identify a typical element $g_0x_-x_+ + g_-x_+x_0+g_+x_0x_-$ of
$\Lambda^2_{DR}(\mathfrak{sl}_2(\mathbb{F}_p))$ with $\hat{g}=(g_0,g_-,g_+) \in P^3$. \\
Finally we can identify a typical element $\theta x_0x_-x_+ \in \Lambda^3_{DR}(\mathfrak{sl}_2(\mathbb{F}_p))$
with $\theta \in P$. \\

We then define the $\mathfrak{sl}_2$ gradient, curl and divergence, $\nabla_{\mathfrak{sl}_2}, Curl_{\mathfrak{sl}_2},
\nabla_{\mathfrak{sl}_2} \cdot$ by saying the following diagram commutes:

\begin{equation}\label{eqn:sl2deRham}
\begin{CD}
\Lambda^3_{DR} @>>> P \\
@A\beta AA @AA\nabla_{\mathfrak{sl}_2}\cdot A \\
\Lambda^2_{DR} @>>> P^3 \\
@A\beta AA @AACurl_{\mathfrak{sl}_2} A \\
\Lambda^1_{DR} @>>> P^3 \\
@A\beta AA @AA\nabla_{\mathfrak{sl}_2} A \\
\Lambda^0_{DR} @>>> P
\end{CD}
\end{equation}
where the horizontal maps are the identifications mentioned in the preceding paragraph.

For $P \in \Lambda^0_{DR}$ we can write $\beta(P)=x_0\beta_0(P) + x_-\beta_-(P) + x_+\beta_+(P)$
where $\beta_i: P \to P$ are easily seen to be derivations. Since the derivations $P \to P$ can in general
be identified with the Witt Lie algebra, let us find such identifications for $\beta_0, \beta_-, \beta_+$.
Since these derivations are uniquely determined by their action on the generators $y_0,y_-,y_+$ of $P$,
simple computations using the Bockstein formulas listed above show that we have the following identifications:
\begin{equation}\label{eqn: DmoduleBockstein}
\begin{split}
\beta_0 = 2y_+\frac{\partial}{\partial y_+}-2y_-\frac{\partial}{\partial y_-} \\
\beta_- = -y_+\frac{\partial}{\partial y_0}+2y_0\frac{\partial}{\partial y_-} \\
\beta_+ = y_-\frac{\partial}{\partial y_0}-2y_0\frac{\partial}{\partial y_+}.
\end{split}
\end{equation}
It is a simple computation to find the operator commutators $[\beta_0,\beta_-]=2\beta_-$,
$[\beta_0,\beta_{+}]=-2\beta_{+}$ and $[\beta_{-},\beta_{+}]=\beta_0$ and hence these operators
give a faithful representation of the Lie algebra $\mathfrak{sl}_2$ in the Witt Lie algebra on 3 generators.

Straightforward computations then express the $\mathfrak{sl}_2$-gradient, curl and divergence in terms
of these linear differential operators.
\begin{align*}
\begin{split}
\nabla_{\mathfrak{sl}_2}(\theta)&=(\beta_0(\theta),\beta_-(\theta),\beta_+(\theta)) ;\\
Curl_{\mathfrak{sl}_2}(\hat{f})&=(\beta_-f_+-\beta_+f_--f_0 , \beta_+f_0-
\beta_0f_+-2f_+ , \beta_0f_--\beta_-f_0-2f_- ) ;\\
\nabla_{\mathfrak{sl}_2} \cdot \hat{f} &= \beta_0 f_0 + \beta_- f_- + \beta_+ f_+ ,
\end{split}
\end{align*}
for all vector polynomials $\hat{f}=(f_0,f_-,f_+) \in P^3$ and scalar polynomials $\theta \in P$.

Since $\beta \circ \beta = 0$, one has the usual calculus identities:
\begin{align*}
\begin{split}
\nabla_{\mathfrak{sl}_2} \cdot Curl_{\mathfrak{sl}_2}(\hat{f})&=0 \text{ and } \\
Curl_{\mathfrak{sl}_2}(\nabla_{\mathfrak{sl}_2}(\theta))&=0
\end{split}
\end{align*}
for all $\hat{f} \in P^3$ and $\theta \in P$. Thus $H^3_{DR}=H^3(\mathfrak{sl}_2,Poly(\mathfrak{sl}_2)^*)$
measures the obstructions to writing a polynomial as a divergence of a vector polynomial,
$H^2_{DR}=H^2(\mathfrak{sl}_2,Poly(\mathfrak{sl}_2)^*)$ measures the obstructions to writing a divergence-free
polynomial vector as a curl of another polynomial vector, \\
$H^1_{DR}=H^1(\mathfrak{sl}_2,Poly(\mathfrak{sl}_2)^*)$
measures the obstructions to writing a curl-free polynomial vector as the gradient of a polynomial, and
$H^0_{DR}=H^0(\mathfrak{sl}_2,Poly(\mathfrak{sl}_2)^*)$ are the polynomials with gradient identically
equal to zero. Thus the DeRham cohomology plays exactly the same role as it does in calculus, except that
the basic notions of gradient, curl and divergence have been altered in accordance with the Lie algebra
$\mathfrak{sl}_2$.

The goal now is to compute $E_1^{*,*}[\mathfrak{sl}_2(\mathbb{F}_p)]=H^*(\mathfrak{sl}_2(\mathbb{F}_p),
Poly(\mathfrak{sl}_2(\mathbb{F}_p))^*)$ as it gives $B_2^*$ in the Bockstein spectral sequence of the
$p$-group $G(\mathfrak{sl}_2(\mathbb{F}_p))$. This is equivalent to computing
$H_{DR}^*(\mathfrak{sl}_2(\mathbb{F}_p))$ as mentioned above. We have seen general comparison theorems
that give the answer in low Hodge degrees relative to the prime $p$ but we seek now to
compute the answer more explicitly in order to understand the ``char 0 to char $p$'' phase transition.

We record some basic facts about $\Lambda_{DR}^*(\mathfrak{sl}_2)$ that will be used in our computations
in the next section. As mentioned before, we weigh the polynomials in $P=\mathbb{F}_p[y_0,y_-,y_+]$
by giving $y_0$ weight 0, $y_-$ weight $-2$ and $y_+$ weight $+2$ and declaring the weight of
a product to be the sum of the weights.

\begin{pro}(Eigenfunctions of $\beta_0$)
\label{pro: eigenfunctions}
If $f \in P$ is a polynomial of weight $w$, then
$\beta_0(f)=wf$. Furthermore the monomial basis of $P$ is a basis of $\beta_0$-eigenfunctions.
\end{pro}
\begin{proof}
A simple computation shows that the identity holds for the generators $y_0, y_-,y_+$.
To finish the proof we note that if $\phi_1$ has weight $a_1$ and $\phi_2$ has weight $a_2$
then as $\beta_0$ is a derivation, we have $\beta_0(\phi_1\phi_2)=\beta_0(\phi_1)\phi_2 + \phi_1\beta_0(\phi_2)
=(a_1+a_2)\phi_1\phi_2$ and the proposition then follows from simple induction and the fact that $P$
has a basis of monomials which are products of generators.
\end{proof}

\begin{pro}(Weight of exterior 0 and 3 lines.) \\
\label{pro: weight03}
$H^3_{DR}=H^3(\mathfrak{sl}_2(\mathbb{F}_p), Poly(\mathfrak{sl}_2(\mathbb{F}_p))^*)$
and
$H^0_{DR}=H^0(\mathfrak{sl}_2(\mathbb{F}_p), Poly(\mathfrak{sl}_2(\mathbb{F}_p))^*)$
consist completely of elements of weights congruent to $0$ mod $p$.

The polynomials $\kappa=y_0^2 + y_+y_-, y_0^p,y_-^p,y_+^p$ lie in $H^0_{DR}$.
\end{pro}
\begin{proof}
If $\theta \in H^0_{DR}-\{0\}$ is of weight $w$ then $\beta(\theta)=0$ so in particular $\beta_0(\theta)=w\theta=0$.
Thus $w=0$ in $\mathbb{F}_p$. Since $w$ is regarded as an integer, this is the same as saying
$w$ must be congruent to $0$ mod $p$.

Let $u = x_0x_-x_+$, and let $\psi u$ with $\psi \in P$ be a typical homogeneous element in $\Lambda^3_{DR}$.
Suppose the weight $w$ of $\psi$ is not zero modulo $p$; then $\beta_0\psi=w\psi \neq 0$
and $\beta_0(\frac{\psi}{w})=\psi$. It is then easy to check that
$\nabla_{\mathfrak{sl}_2} \cdot (\frac{\psi}{w},0,0)=\psi$ and so $\psi u$ is a coboundary and represents
zero in $H^3_{DR}$. Thus $H^3_{DR}$ consists exclusively of elements of weight congruent to $0$ mod $p$ as
$u$ itself has weight $0$.

The final line of the proposition is a trivial computation.
\end{proof}

If $f_0, f_-, f_+ \in P$ have weights $m,n,k$ respectively, we will refer to the
weight of the vector polynomial $\hat{f}=(f_0,f_-,f_+)$ as $(m,n,k) \in \mathbb{Z}^3$.

\begin{pro}(Weight of exterior 1 line.)
\label{pro: weight1}
Let $\theta=f_0x_0 + f_-x_- + f_+x_+ \in \Lambda^1_{DR}$ have weight $w \in \mathbb{Z}$. The
corresponding vector $\hat{f}=(f_0,f_-,f_+)$ has weight $(w,w+2,w-2)$. Then we have
$\beta(\theta)=0 \leftrightarrow Curl_{\mathfrak{sl}_2}(\hat{f})=0$ if and only if the following equations hold:
\begin{align*}
\begin{split}
f_0 &= \beta_-(f_+) - \beta_+(f_-) \\
\beta_+ (f_0) &= wf_+ \\
\beta_-(f_0) &= wf_-
\end{split}
\end{align*}
$H^1_{DR}=H^1(\mathfrak{sl}_2(\mathbb{F}_p), Poly^*(\mathfrak{sl}_2(\mathbb{F}_p)^*))$
consists completely of elements of weight congruent to $0$ mod $p$.
\end{pro}
\begin{proof}
The equations follow from $Curl_{\mathfrak{sl}_2}(\hat{f})=0$ using the formula for $Curl_{\mathfrak{sl}_2}$ listed above together
with simplifications which come from Proposition~\ref{pro: eigenfunctions}.

Thus it remains to show that $H_{DR}^1$ consists only of elements of weight congruent to $0$ mod $p$.
Let $\theta \in \Lambda^1_{DR}$ be an element with $\beta(\theta)=0$ and weight $w$ {\bf not} congruent
to $0$ mod $p$. Since $w \neq 0 \in \mathbb{F}_p$ we have
$$
\nabla_{\mathfrak{sl}_2}\left(\frac{f_0}{w}\right)=\frac{1}{w}(\beta_0(f_0), \beta_-(f_0), \beta_+(f_0))=\frac{1}{w}(wf_0,wf_-,wf_+)=
\hat{f}
$$
where the second to last equality follows from the equations stated in this proposition together with
$\beta_0(f_0)=wf_0$ as $f_0$ has weight $w$. Thus $[\theta]=0 \in H^1_{DR}$. So we can conclude that
the (nonzero) elements of $H^1_{DR}$ all have weight congruent to $0$ mod $p$.
\end{proof}

\begin{cor}(Weight of $H^*_{DR}$)
$H^*_{DR}=E_1^{*,*}[\mathfrak{sl}_2(\mathbb{F}_p)]=H^*(\mathfrak{sl}_2(\mathbb{F}_p), Poly(ad^*))$ consists
entirely of elements of weight $0$ mod $p$. Thus the $E_1$ and higher pages of the spectral sequence
are concentrated completely in weights $0$ modulo $p$.
\end{cor}
\begin{proof}
From proposition~\ref{pro: weight03} and proposition~\ref{pro: weight1}, it follows that any elements
of weight {\bf not} congruent to $0$ mod $p$ in $E_1^{*,*}=H_{DR}^*$
must lie on the exterior 2 line, i.e., on $E_1^{*,2}$.
Since all differentials involving this line either come from or go to elements not on this line, and since
we know the spectral sequence respects weight, we conclude that any elements of weight not congruent to
$0$ mod $p$ on the exterior $2$ line would survive until $E_{\infty}$. Since we know $E_{\infty}$ is the cohomology
of a point, we conclude that all (nonzero) elements on the exterior 2 line must also have weight $0$ mod $p$.
Thus the corollary is proven.
\end{proof}

\begin{lem}
\label{lem: weight0poly}
(Weight 0 polynomials) Let $V_0^N$ denote the vector space of (integral) weight $0$, degree $N$
polynomials inside $P$. The set $\theta_{\ell}=y_0^{N-2\ell}(y_-y_+)^{\ell}, 0 \leq \ell \leq \frac{N}{2}$ is a
basis of $V_0^N$. The set $\Gamma_{\ell}=y_0^{N-2\ell}\kappa^{\ell}, 0 \leq \ell \leq \frac{N}{2}$ is also a basis,
where $\kappa=y_0^2+y_-y_+$.

The algebra $P_0$ of integral weight $0$ polynomials is $\mathbb{F}_p[\kappa, y_0]$.
\end{lem}
\begin{proof}
Any monomial of integral weight zero must involve the same number of $y_-$ and $y_+$ variables
from which it follows the set $\theta_{\ell}, 0 \leq \ell \leq \frac{N}{2}$ is a basis of $V_0^N$.

Expanding $\kappa^{\ell}=(y_0^2+y_-y_+)^{\ell}$, it is easy to see that
$\Gamma_{\ell} = \sum_{j=0}^{\frac{N}{2}} c_{\ell j} \theta_j$ where
$c_{\ell j}=0$ if $j > \ell$ and $c_{\ell \ell}=1$. 
Thus the components of $\Gamma_{\ell}$ with respect to the basis $\{ \theta_{\ell} \}$
form a lower triangular matrix with 1's along the diagonal. Since this matrix is invertible, the set
$\{ \Gamma_j \}$ is also a basis of $V_0^N$. The final line follows from these remarks as $\kappa$
is algebraically independent from $y_0$ since $y_-y_+$ is.
\end{proof}

\begin{lem}(Weight $0$ mod $p$ monomials)
\label{lem: weight0modppoly}
Let $s_-=y_-^p$ and $s_+=y_+^p$. If $f$ is a monomial of weight $0$ mod $p$ then
$f=s_-^ks_+^{\ell} f_0$ for some nonnegative integers $k,\ell$ and integral weight $0$ polynomial $f_0$.
\end{lem}
\begin{proof}
Follows easily as any monomial with weight $0$ mod $p$, must involve a number of $y_-$ variables
and a number of $y_+$ variables that are congruent modulo $p$.
\end{proof}

\begin{pro}(Euler-Poincare Count Formula)
\label{pro: EulerPoincare}
Let ${\bf k}$ be a field, $\mathfrak{g}$ a finite dimensional ${\bf k}$-Lie algebra
of dimension $n > 0$ and $M$ a finite dimensional $\mathfrak{g}$-module.
Then
$$
\sum_{i=0}^{n} (-1)^i \dime(H^i(\mathfrak{g},M)) = 0.
$$
Thus in particular, $\sum_{i=0}^{\dime(\mathfrak{g})} (-1)^i \dime (E_1^{m,i}[\mathfrak{g}]) = 0$
for all nonnegative integers $m$.
\end{pro}
\begin{proof}
$H^*(\mathfrak{g},M)$ can be computed as the cohomology of a Koszul complex of the form
$\Lambda^*(\mathfrak{g}^*) \otimes M$. Letting $C^i = \Lambda^i(\mathfrak{g}^*) \otimes M$,
one has from the Euler-Poincare lemma:
$$
\sum_{i=0}^{n} (-1)^i \dime(C^i) = \sum_{i=0}^{n} (-1)^i \dime(H^i(\mathfrak{g},M)).
$$
Since $\dime(C^i)$ is easily seen to be $\begin{pmatrix} n \\ i \end{pmatrix}  \dime(M)$,
the alternating sum on the left comes out to
$$
\dime(M) \sum_{i=0}^n (-1)^i \begin{pmatrix} n \\ i \end{pmatrix} =\dime(M) (1 - 1)^n = 0.
$$
The last statement follows as $E_1^{m,i}[\mathfrak{g}]=H^i(\mathfrak{g}, Poly^m(ad^*))$.
\end{proof}

\begin{pro}
\label{pro: 0-line computation}
($H_{DR}^0$-computation)
Let $\kappa=y_0^2 + y_-y_+$, $s_0=y_0^p$, $s_-=y_-^p$, and $s_+=y_+^p$. Then
$H_{DR}^0=H^0(\mathfrak{sl}_2(\mathbb{F}_p),Poly(ad^*))$ is generated as an $\mathbb{F}_p$-algebra
by $\kappa, s_0, s_+$,and $s_-$. Furthermore, $s_0,s_-$,and $s_+$ generate a polynomial algebra 
and $\kappa$ is integral over this polynomial algebra satisfying minimal relation
$\kappa^p = s_0^2 + s_-s_+$.
\end{pro}
\begin{proof}
Let $A \subseteq \mathbb{F}_p[y_0,y_-,y_+]$ be the $\mathbb{F}_p$-algebra generated by $\kappa, s_0, s_-, s_+$.
$A  \subseteq H_{DR}^0$ by proposition~\ref{pro: weight03}. Also we have seen that $H_{DR}^0$ is a subset
of the weight $0$ mod $p$ polynomials which lie in the $\mathbb{F}_p$-algebra generated by $y_0$ and $A$.
Let $f \in H_{DR}^0$ have degree $N$. Then we can write
$$
f=\sum_{i=0}^N y_0^i P_i =\sum_{i=0}^{p-1} y_0^i Q_i, 
$$
where $P_i, Q_i \in A$ for all $i$. (We used that $y_0^p=s_0 \in A$ in the second equality.)
Thus $f = Q_0 + \sum_{i=1}^{p-1} y_0^i Q_i$. Since $\beta(f)=\beta(Q_i)=0$, we have
$$
0 = \sum_{i=1}^{p-1} \beta_+(y_0^i)Q_i
  = \sum_{i=1}^{p-1} iy_0^{i-1}y_- Q_i ,
$$
and so
$\sum_{i=1}^{p-1} iy_0^i Q_i=0$, 
where in the last step we scaled by $y_0$ and used that $y_-$ is not a zero divisor in $\mathbb{F}_p[y_0,y_-,y_+]$
to cancel it. If $G=\sum_{i=1}^{p-1} y_0^i Q_i=f-Q_0$, then we have seen that since $\beta(G)=0$, 
$\sum_{i=1}^{p-1} iy_0^i Q_i=0$. Since the numbers $0 < i < p$ are nonzero in $\mathbb{F}_p$, we can use
this relation to solve for $y_0^{p-1} Q_{p-1}$ as a linear combination of $y_0^i Q_i$ for $0 < i < p-1$.
Thus $G=\sum_{i=1}^{p-2} y_0^i T_i$ for new elements $T_i \in A$. We can now repeat the argument
until we see that $G= y_0 T$ for some $T \in A$. However $\beta_+(G)=0=y_-T$ then gives $T=0$ and hence $G=0$.
Thus $f=Q_0 + G=Q_0 \in A$ and hence we have shown that $H_{DR}^0 \subseteq A$. Thus
$A=H_{DR}^0$.

Finally $s_0, s_-, s_+$ are algebraically independent as they are the image of the algebraically independent
elements $y_0, y_-, y_+$ under the injective Frobenius algebra endomorphism of $\mathbb{F}_p[y_0,y_-,y_+]$ given by
$\theta(\alpha)=\alpha^p$. Since $\kappa=y_0^2+y_-y_+$, the Frobenius endomorphism also shows that
$\kappa^p=s_0^2+s_-s_+$ and hence $\kappa$ is integral over $\mathbb{F}_p[s_0,s_-,s_+]$ as it satisfies
the monic polynomial $t^p-(s_0^2+s_-s_+)$. It is clear that this is the minimal polynomial as $\kappa$
has degree $2$ and the $s_i$ have degree $p$, and $p$ is an odd prime.
\end{proof}

\begin{pro}
\label{pro: ext3linecomputation}
($H_{DR}^3$-computation) Let $u \in E_1^{0,3}$ and $\tau \in E_1^{p-1,3}$
be $\beta$-cohomology classes representing $x_0x_-x_+$ and
$y_0^{p-1}x_0x_-x_+$ respectively. Then $u, \tau \neq 0$ and
$E_1^{*,3}=H_{DR}^3$ is generated
as a $E_1^{*,0}=H_{DR}^0$-module by $\tau$ and $u$. Furthermore
we have the relations $us_{+}=us_-=us_0=0$ and the fundamental relation
$\kappa^{\frac{p-1}{2}}u=0$.

This implies $\tau H_{DR}^0 \cap u H_{DR}^0=0$ and so
$H_{DR}^3 = \tau H_{DR}^0 \oplus u H_{DR}^0$.
\end{pro}
\begin{proof}
By proposition~\ref{pro: weight03}, any element of $H_{DR}^3$ has
weight congruent to $0$ mod $p$. Since $x_0x_-x_+$ has weight $0$,
such an element has to be represented by $x_0x_-x_+F$ where $F$
is a polynomial of weight $0$ mod $p$. Thus $F$ is a linear combination
of terms of the form
$\kappa^{\ell}y_0^js_0^as_-^bs_+^c$ for $\ell,j,a,b,c$ nonnegative integers
and $0 \leq j \leq p-1$. We can write $F$ as $F=\sum_{i=0}^{p-1} f_i y_0^i$
where $f_i \in H_{DR}^0$. (Note $\beta$ is a $H_{DR}^0$-module map so
the elements $f_i$ of gradient identically zero function as ``constants''.)
Thus we see $H_{DR}^3$ is generated as a $H_{DR}^0$-module by
$[x_0x_-x_+y_0^i]$ $0 \leq i \leq p-1$, where $[-]$ means ``cohomology class
represented by''. Note $u=[x_0x_-x_+y_0^0]$ and $\tau=[x_0x_-x_+y_0^{p-1}]$.

Let us write $F_1 \sim F_2$ if they differ by $\nabla_{\mathfrak{sl}_2} \cdot \hat{G}$
for some $\hat{G} \in P^3$. In this case $x_0x_-x_+F_1$ and
$x_0x_-x_+F_2$ are $\beta$-cohomologous and so $uF_1=uF_2 \in H_{DR}^3$.

Let us fix a weight $w$ which is $0$ mod $p$
for $F$ and ask about the image of $\nabla_{\mathfrak{sl}_2} \cdot$
in this weight $w$ component. To hit this weight component, the input
$\hat{G}=(g_0,g_-,g_+)$ has to have weight $(w,w-2,w+2)$ and the corresponding
output will be
$$
\beta_0 g_0 + \beta_-(g_-) + \beta_+(g_+)=\beta_-(g_-)+\beta_+(g_+), 
$$
as $\beta_0g_0=0$. Since $g_-$ has weight $w-2$ congruent to $-2$ mod $p$,
it is a combination of terms of the form
$y_-y_0^i\alpha$ or $y_+^{p-1}y_0^i\alpha$ with
$\alpha \in H_{DR}^0$ and $0 \leq i \leq p-1$. Since $g_+$ has weight $w+2$
congruent to $+2$ mod $p$, it is a combination of terms of the form
$y_+y_0^i\alpha$ or $y_-^{p-1}y_0^i\alpha$, $\alpha \in H_{DR}^0, 0
\leq i \leq p-1$. Thus, as $g_-$ and $g_+$ can be chosen independently, 
we see that the weight $w$ component of the image of the divergence
$\nabla_{\mathfrak{sl}_2} \cdot$ is the $H_{DR}^0$-module spanned by the elements
$\beta_+(y_+y_0^i), \beta_+(y_-^{p-1}y_0^i), \beta_-(y_-y_0^i),
\beta_-(y_+^{p-1}y_0^i)$ with $0 \leq i \leq p-1$ (or equivalently
for all nonnegative integers $i$ as $y_0^p=s_0 \in H_{DR}^0$).

After simple computations we find that the image of the divergence is
generated as a $H_{DR}^0$-module by the elements
$i\kappa y_0^{i-1}-(i+2)y_0^{i+1}, s_-iy_0^{i-1}, s_+iy_0^{i-1}$
for $0 \leq i \leq p-1$ (or equivalently all nonnegative $i$).

Setting $\theta_j= jy_0^{j-1}$ we find that $\theta_j=0$ if $j$ is congruent
to $0$ mod $p$ and $\frac{\theta_j}{j}=y_0^{j-1}$ if $j$ is not congruent
to $0$ mod $p$. In this new language the image of the divergence
can be computed as the $H_{DR}^0$-module spanned by
$\kappa \theta_j - \theta_{j+2}, s_+\theta_j, s_-\theta_j$
for all nonnegative integers $j$. Thus
$\kappa \theta_j \sim \theta_{j+2}$ for all nonnegative $j$, and
by a simple induction we have that
all $\theta_{2k} \sim \kappa^k \theta_0 = 0$ and
$\theta_{2k+1} \sim \kappa^k \theta_1 = \kappa^k$.

Thus $u [\theta_{2k}]=0$ and $u[\theta_{2k+1}]=u\kappa^k$ in $H_{DR}^3$.
Since $\theta_{i+1}$ are nonzero multiples of $y_0^{i}$ when $1 \leq i < p-1$
it follows that $u[y_0^{i}]=0$ when $i$ is odd in this range and
$u[y_0^{2k}]=\frac{1}{2k+1} u\kappa^k$ when $i=2k$ in this range. Since we had previously
seen that $H_{DR}^3$ is generated by $u[y_0]^i$ for $0 \leq i \leq p-1$
as $H_{DR}^0$-module, we now see that all these generators except
$u[y_0^0]=u$ and $u[y_0^{p-1}]=\tau$ are redundant.

Hence we have shown that $H_{DR}^3$ is generated by $u$ and $\tau$ as
a $H_{DR}^0$-module. Since $1$ is not in the image of the divergence,
$u \neq 0$. Since $y_0^{p-1}$ is not in the image of the divergence,
$\tau \neq 0$. Since $s_+\theta_1=s_+, s_-\theta_1=s_-$
are in the image
of the divergence, $s_+u=s_-u=0$. Since $y_0^p=\theta_{p+1} \sim \kappa^{\frac{p+1}{2}}\theta_0=0$, we also have $s_0u=0$.
Finally since $0=\theta_p \sim \kappa^{\frac{p-1}{2}}\theta_1=\kappa^{\frac{p-1}{2}}$ we have $\kappa^{\frac{p-1}{2}}u=0$.

Note the relations that we just found imply that $uH_{DR}^0$ is concentrated
in Hodge degrees less than or equal to $p-2$ as $u, u\kappa, \dots,
u\kappa^{\frac{p-1}{2}-1}$ are. On the other hand $\tau H_{DR}^0$ is
concentrated in Hodge degrees greater than or equal to $p-1$, the Hodge
degree of $\tau$. Thus $\tau H_{DR}^0 \cap u H_{DR}^0=0$ and the proof
of the proposition is complete.
\end{proof}

Since it is important to know that $\kappa^i u \neq 0$ for $0 \leq i <
\frac{p-1}{2}$ in order to get lower bounds on essential dimension,
we show that with a dedicated proof for clarity:

\begin{pro}
\label{pro: fundnonvanishing}
(Fundamental nonvanishing)
In $H_{DR}^3$ we have
$$\kappa^{\frac{p-3}{2}} u \neq 0.$$
\end{pro}
\begin{proof}
To ease notation, we set $i = \frac{p-3}{2}$.  Assume to the contrary that there is an element $x_0 x_+ F + x_0 x_- G + x_+ x_- H$ with
\[ \beta(x_0 x_+ F + x_0 x_- G + x_+ x_- H) = \kappa^i u, \]
where $F$, $G$, and $H$ are polynomials of degree $p - 3$.  Then $F$ must have weight $-2$, and hence is a linear combination of terms of the form $y_0^{2j + 1} y_+^{i - j - 1} y_-^{i - j}$ for $0 \leq j < i$.  Similarly, $G$ is a linear combination of terms of the form $y_0^{2j + 1} y_+^{i - j} y_-^{i - j - 1}$, and $H$ is a linear combination of terms of the form $y_0^{2j} y_+^{i - j} y_-^{i - j}$.  A straightforward induction shows that
\begin{eqnarray*}
\beta(x_0 x_+ y_0^k y_+^{\ell - 1} y_-^{\ell}) &=& x_0 x_+ x_- (2\ell y_0^{k + 1} y_+^{\ell - 1} y_-^{\ell - 1} - k y_0^{k - 1} y_+^{\ell} y_-^{\ell}) \\
&=& \beta(x_0 x_- y_0^k y_+^{\ell} y_-^{\ell - 1}) ;\\
\beta(x_+ x_- y_0^k y_+^{\ell} y_-^{\ell}) &=& 0 .\\
\end{eqnarray*}
Thus, without loss of generality, we may assume $G = H = 0$.

Now we compute
\begin{eqnarray*}
&& \beta \left( \sum_{j=0}^{i-1} a_j x_0 x_+ y_0^{2j + 1} y_+^{i - j - 1} y_-^{i - j} \right) \\
&&= \sum_{j=0}^{i-1} a_j x_0 x_- x_+ \Big( (2j - 2i) y_0^{2j + 2} y_+^{i - j - 1} y_-^{i - j - 1} + (2j + 1) y_0^{2j} y_+^{i - j} y_-^{i - j} \Big) \\
&&= u \left( a_0 y_+^i y_-^i - 2a_{i-1} y_0^{2i} + 
\sum_{j=1}^{i-1} \Big( a_j(2j + 1) + a_{j-1} (2j - 2i - 2) \Big) y_0^{2j} y_+^{i - j} y_-^{i - j} \right)
\end{eqnarray*}
Setting this equal to
\[ u\kappa^i = u \sum_{j=0}^i \binom{i}{j} y_0^{2j} y_+^{i - j} y_-^{i - j}, \]
and comparing the $y_0$-degree $0$ terms, we obtain $a_0 = 1$.  We claim that $a_j = \tbinom{i}{j}$ for $j \leq i$, which we show by induction.

Assume that $a_{j-1} = \tbinom{i}{j-1} = \tbinom{i}{j} \tfrac{j}{i - j + 1}$.  Then the coefficient of the $y_0$-degree $2j$ term 
for $1 \leq j \leq i-1$ is
\[ a_j(2j + 1) + a_{j-1} (2j - 2i - 2) = a_j(2j + 1) + \binom{i}{j-1} (2j - 2i - 2). \]
Now, $2j + 1 \not \equiv 0 \pmod{p}$, since $j \leq i = \tfrac{p-3}{2}$, so upon setting this equal to $\tbinom{i}{j}$, we obtain
\begin{eqnarray*}
a_j &=& \dfrac{1}{2j + 1} \left( \binom{i}{j} + \binom{i}{j} \dfrac{j}{i - j + 1} (2i - 2j + 2) \right) \\
&=& \dfrac{1}{2j + 1} \left( \binom{i}{j} + 2j \binom{i}{j} \right) = \binom{i}{j}.
\end{eqnarray*}

Finally, we compare the $y_0$-degree $2i$ terms, and find that $-2 a_{i-1} = \tbinom{i}{i} = 1$.  However since we also 
have $a_j=\binom{i}{j}$, we get  
$-2a_{i-1}=-2\binom{i}{i-1}=-2i = 3 - p \equiv 3 \not \equiv 1 \pmod{p}$, and this contradiction completes the proof.
\end{proof}

We now show that for every odd prime $p$, the behaviour of
$E_1^{*,*}[\mathfrak{sl}_2(\mathbb{F}_p)]$ is the same as
that of $E_1^{*,*}[\mathfrak{sl}_2(\mathbb{C})]$ for Hodge degrees
less than $p-1$.

\begin{pro}(Low Hodge degree computation)
\label{pro: lowHodgecomputation}
For {\bf every} odd prime $p$,
$E_1^{*,*}[\mathfrak{sl}_2(\mathbb{F}_p)] \cong
\Lambda^*(u) \otimes \mathbb{F}_p[\kappa]$
for Hodge degrees less than $p-1$. (The isomorphism breaks down
at Hodge degree $p-1$, for example we have seen $\kappa^{\frac{p-1}{2}}u=0$).
\end{pro}
\begin{proof}
We have seen that the $H_{DR}^0=E_1^{*,0}$-line is generated by
$\kappa, s_0, s_+, s_-$ and so for Hodge degree less than $p$,
the only elements are $\kappa^i, 0 \leq i \leq \frac{p-1}{2}$.
We have seen that the $H_{DR}^3$-line is generated over $H_{DR}^0$
by $\tau$ and $u$ and so for Hodge degree less than $p-1$, the only
elements are $u\kappa^i, 0 \leq i < \frac{p-1}{2}$.

Let $a_i=\dime(E_1^{i,0}), b_i=\dime(E_1^{i,1}), c_i=\dime(E_1^{i,2}),
d_i=\dime(E_1^{i,3})$ then we have for $0 \leq i \leq \frac{p-1}{2}$,
$a_i=d_i=1$ if $i$ even and $a_i=d_i=0$ for $i$ odd.
Thus in this same range, the Euler-Poincare identity gives
$
0=a_i-b_i+c_i-d_i=-b_i+c_i
$
and so $b_i=c_i$ for $0 \leq i \leq \frac{p-1}{2}$.

Note since $E_1^{1,0}=0$, we know that any nonzero element of $E_1^{0,1}$
would survive to $E_{\infty}$ contradicting that $E_{\infty}$ is the
cohomology of a point. Thus $E_1^{0,1}=0$ and so $b_0=c_0=0$.

Now $d_1(u)=d_1(x_0x_-x_+)=[y_0x_-x_+-x_0y_-x_++x_0x_-y_+]=0$ as
$y_0x_-x_+-x_0y_-x_++x_0x_-y_+=\beta(x_0y_0+\frac{1}{2}(x_-y_++x_+y_-))$.
Note as $d_1(x_0y_0+\frac{1}{2}(x_-y_++x_+y_-))=
y_0^2 + y_-y_+$, we see directly
that $d_2(u)= \kappa$.

This establishes then that $d_1(u\kappa^i)=0$ and
$d_2(u\kappa^i)=[\kappa^{i+1}]$ in $E_2$.
Since by proposition~\ref{pro: fundnonvanishing}, 
$u \kappa^i \neq 0$ in $E_1^{2i,3}$ 
for $0 \leq i < \frac{p-1}{2}$, this forces
$d_2$ to be nontrivial in this range, as these elements must support some differential by
$E_{\infty}$ and $d_2$ is the last possible nonzero differential.
Note this means also that $\kappa^{i+1}$ is not hit by any $d_1$-differential
for this range of $i$, as it must survive until $E_2$ to be hit by $d_2(u\kappa^i)$.

Since this accounts for the behaviour of all of the exterior $0$ and $3$ line
in the range of Hodge degree less than $p-1$, we conclude that we must have
$d_1: E_1^{i,2} \to E_1^{i+1,1}$ is an isomorphism in the range
$0 \leq i < p-2$. Thus $c_i=b_{i+1}$ in this range.

Thus $0=b_0=c_0=b_1=c_1=b_2=c_2=\dots=b_{p-2}=c_{p-2}$ which completes
the proof of the proposition.

\end{proof}

This completes the bulk of the computations. In the next section we
lay out a picture of generators of $E_1^{*,*}=H^*(\mathfrak{sl}_2,Poly(ad^*))$
with tables and spectral sequence diagrams to help summarize
the picture. Results about the corresponding $p$-groups are then found.

\section
{Computation of $E_1^{*,*}[\mathfrak{sl}_2(\mathbb{F}_p)]$}
\label{section: modp}

The table below gives a list of important elements
in $E_1^{*,*}[\mathfrak{sl}_2(\mathbb{F}_p)]$. All elements
are $d_0=\beta$-cycles and so represent elements in $E_1$.
We have seen that the elements $\kappa, u, \tau, s_0,s_-,s_+$ generate
the exterior $0$ and $3$ lines as well as all elements of Hodge degree
less than $p-1$. In the table, $s$ is the Hodge degree, $t$ is the exterior degree 
and $w$ is the weight.

\begin{center}
\begin{tabular}{c|c|c|c|l}
& $s$ & $t$ & $w$ \\
\hline
$u$ & $0$ & $3$ & $0$ & $x_0 x_- x_+$ \\
$\kappa$ & $2$ & $0$ & $0$ & $y_0^2 + y_+ y_-$ \\
$\lambda_0$ & $p - 1$ & $1$ & $0$ & $x_0 \kappa^{(p-1)/2} - x_+ y_0 y_+^{-1} \left( \kappa^{(p-1)/2} - y_0^{p-1} \right)$ \\
$\lambda_+$ & $p - 1$ & $1$ & $2p$ & $x_+ y_+^{p-1}$ \\
$\lambda_-$ & $p - 1$ & $1$ & $-2p$ & $x_- y_-^{p-1}$ \\
$\mu_0$ & $p - 1$ & $2$ & $0$ & $x_0y_0^{p-2}\beta(y_0)=x_0 y_0^{p-2} (x_+ y_- - x_-y_+)$ \\
$\mu_+$ & $p - 1$ & $2$ & $2p$ & $x_+y_+^{p-2}\beta(y_+)=-2x_0 x_+ y_+^{p-1}$ \\
$\mu_-$ & $p - 1$ & $2$ & $-2p$ & $x_-y_-^{p-2}\beta(y_-)=2x_0 x_- y_-^{p-1}$ \\
$\tau$ & $p - 1$ & $3$ & $0$ & $x_0 x_- x_+ y_0^{p-1}$ \\
$s_0$ & $p$ & $0$ & $0$ & $y_0^p=d_1(\lambda_0)$ \\
$s_+$ & $p$ & $0$ & $2p$ & $y_+^p=d_1(\lambda_+)$ \\
$s_-$ & $p$ & $0$ & $-2p$ & $y_-^p=d_1(\lambda_-)$ \\
$f_0$ & $p$ & $1$ & $0$ & $y_0^{p-1}\beta(y_0)=y_0^{p-1} (x_+ y_- - x_-y_+)=d_1(\mu_0)$ \\
$f_+$ & $p$ & $1$ & $2p$ & $y_+^{p-1}\beta(y_+)=-2y_+^{p-1} (x_+ y_0 - x_0 y_+)=d_1(\mu_+)$ \\
$f_-$ & $p$ & $1$ & $-2p$ & $y_-^{p-1}\beta(y_-)=2y_-^{p-1} (x_- y_0 - x_0 y_-)=d_1(\mu_-)$ \\
$\gamma$ & $p$ & $1$ & $0$ & $x_+ y_+^{-1} \left( \kappa^{(p+1)/2} - y_0^{p+1} \right) + x_0y_0^p$ \\
$\epsilon$ & $p$ & $2$ & $0$ & $y_0^{p-1} (x_-x_+y_0-x_0 x_+ y_- + x_0 x_- y_+)=d_1(\tau)$ \\
\end{tabular}
\end{center}
Note that multiplication by $y_+^{-1}$ is defined where it appears, since every term it is multiplied by contains a factor of $y_+$.

From the table above, we see the introduction of some new variables
$\mu_i, \lambda_i, f_i$, $\epsilon, \gamma$. Let us explain why these
give the full picture up to Hodge degree $p$.

First note that at the location $E_1^{p-1,1}$, no differentials can be
incoming from the left. Since all elements must eventually die, we see
that $d_1: E_1^{p-1,1} \to E_1^{p,0}$ is an isomorphism.
Since $E_1^{p,0}$ is spanned by $s_0, s_-,s_+$ by
proposition~\ref{pro: 0-line computation}, we see that $E_1^{p-1,1}$
is spanned by elements $\lambda_0, \lambda_-, \lambda_+$
in $E_1$ such that $d_1(\lambda_i)=s_i$. Formulas for
these elements are listed in the table above.

Now note as $\dime(E_1^{p-1,0})=1=\dime(E_1^{p-1,3})$ generated by
$\kappa^{\frac{p-1}{2}}$ and $\tau$ respectively, by the Poincare-Euler
count, we find that 
$$
\dime(E_1^{p-1,2})=\dime(E_1^{p-1,1})=3.
$$
Let $\mu_0, \mu_-, \mu_+$ be a basis of $E_1^{p-1,2}$.
(Candidates are listed in the table above but we still need to show
these candidates are linearly independent in $E_1$. This will 
be proven in Lemma~\ref{lem: mu-f-basis} below.)

Note the explicit element $\gamma$ in the table has
$d_1(\gamma)=\kappa^{\frac{p+1}{2}}$ and so we see explicitly
that $[\kappa^{\frac{p+1}{2}}]=0$ in $E_2$. This now leaves
$\tau$ in a dilemma: it can support no nonzero $d_2$ differential as
the corresponding target location is zero (spanned by
$[\kappa^{\frac{p+1}{2}}]=0$.). Thus $d_1(\tau)=\epsilon \neq 0$ in $E_1$.

From what we have so far, the following short exact sequence is forced:
$$
0 \to E_1^{p-1,2} \to E_1^{p,1} \to E_1^{p+1,0} \to 0.
$$
We know that the left hand group has basis the $\mu_i$ and dimension $3$.
We also know the right hand group has basis $\kappa^{\frac{p+1}{2}}$
and dimension $1$. Thus $E_1^{p,1}$ has dimension exactly four
generated by the element $\gamma$ and the $d_1$-images of the $\mu_i$.
In the table above we have denoted these $d_1$-images as $f_i$ i.e.,
$f_i=d_1(\mu_i)$.

Now note that $E_1^{p,3}=0$ as the exterior $3$ line is generated
by $\tau$ and $u$ over $E_1^{*,0}$ and $u$'s contribution lies entirely
before Hodge degree $p-1$, while nothing nonzero
times $\tau$ lies in that location. Thus we know for certain that
the dimensions of $E_1^{p,0}, E_1^{p,1}$ and $E_1^{p,3}$ are $3, 4$
and $0$ respectively. The Euler-Poincare count now forces
$\dime(E_1^{p,2})=1$. Since we have already argued for the existence
of the nonzero element $\epsilon$ in that location, it must be a
basis for $E_1^{p,2}$.

We now prove that the $\mu_i$ and hence the $f_i$ listed in the table 
are indeed the basis for $E_1^{p-1,2}$ and $E_1^{p,1}$ respectively:

\begin{lem}
\label{lem: mu-f-basis} 
The elements $\{ \mu_0, \mu_+, \mu_- \}$ in the table above are a basis for 
$E_1^{p-1,2}$ and the elements $\{ f_0, f_+, f_- \}$ are a basis 
for $E_1^{p,1}$.
\end{lem}
\begin{proof}
Since the elements have distinct integral weight, to show they 
are linearly independent, it is sufficient to show they are nonzero. 
Since we also have argued that $\dime E_1^{p-1,2} = \dime E_1^{p,1}=3$ 
this is enough to show they are a basis and complete the lemma.

Furthermore since $d_1(\mu_i)=f_i$ for $i=0,-,+$, we can show either 
$\mu_i$ or $f_i$ is nonzero, and it will follow that the other in the pair 
is also nonzero.

Recall the root lattice of $\mathfrak{sl}_2$ is $2\mathbb{Z}$, it is 
easy to check that $E_0^{p-1,1}[2p]$, the weight $2p$ component of 
$E_0^{p-1,1}$, is one dimensional, spanned by $\lambda_+=x_+y_+^{p-1}$. 
A direct check shows that $d_0(x_+y_+^{p-1})=0$ and so we find that 
$d_0: E_0^{p-1,1}[2p] \to E_0^{p-1,2}[2p]$ is identically zero. Since 
$\mu_+ \in E_0^{p-1,2}[2p]$ and the spectral sequence preserves weight, 
this shows $\mu_+$ is not a $d_0$-coboundary and so $\mu_+ \neq 0 \in 
E_1^{p-1,2}$ as desired. A similar proof works to show 
$\mu_- \neq 0 \in E_1^{p-1,2}$. 

It remains to show $\mu_0 \neq 0 \in E_1^{p-1,2}$. We will do this 
by showing $d_1(\mu_0)=f_0 \neq 0 \in E_1^{p,1}$. Since $f_0$ is represented 
by $y_0^{p-1}\beta(y_0)$ and has integral weight $0$, it is enough to 
show that $y_0^{p-1}\beta(y_0)$ is not in the image of 
$d_0: E_1^{p,0}[0] \to E_1^{p,1}[0]$. The typical element $\alpha$ of 
$E_1^{p,0}[0]$ is of the form $\alpha=\sum_{\ell=0}^{\frac{p-1}{2}} c_{\ell} \kappa^{\ell} 
y_0^{p-2\ell}$. Computing we find:
$$
d_0(\alpha)=
(\sum_{\ell=0}^{\frac{p-1}{2}} c_{\ell} \kappa_{\ell} (p-2\ell) y_0^{p-1-2\ell}) \beta(y_0).
$$
Now $\beta(y_0)=y_-x_+-y_+x_-$ has the easily verified property that for any 
nonzero polynomial $f \in \mathbb{F}_p[\kappa, y_0]$, 
$\beta(y_0) f \neq 0 \in E_0^{*,*}$.
Thus if $d_0(\alpha)=y_0^{p-1}\beta(y_0)$ we would have 
$$
(\sum_{\ell=0}^{\frac{p-1}{2}} c_{\ell} (p-2\ell) \kappa_{\ell} y_0^{p-1-2\ell} - y_0^{p-1}) = 0.
$$
Since $y_0$ and $\kappa$ are algebraically independent, equating coefficients 
of $y_0^{p-1}$ we get $c_0 p - 1=0$, i.e., $-1 \equiv 0$ mod $p$ which is 
ridiculous. Thus $f_0=y_0^{p-1}\beta(y_0)$ is not in the image 
of $d_0$ and hence $f_0 \neq 0 \in E_1^{p,1}$ and we are done.

\end{proof}

Thus after lots of work we have  found:

\begin{thm}($H^*(\mathfrak{sl}_2,Poly(ad^*))$ description).
The 17 elements listed in the table above generate all parts of
$E_1^{*,*}[\mathfrak{sl}_2(\mathbb{F}_p)]=
H^*(\mathfrak{sl}_2,Poly(ad^*))$ in Hodge degrees $p$ or less,
and also on the exterior $0$ and $3$ lines. They are a minimal set of
generators that do so. At most a {\bf finite} number
of additional generators lying on exterior $1$, $2$ lines of Hodge
degree $> p$ are required to generate the whole algebra.

Furthermore,
$$
E_2^{*,*}[\mathfrak{sl}_2(\mathbb{F}_p)] =
\Lambda^*(u) \otimes \mathbb{F}_p[\kappa]
/<u\kappa^{\frac{p-1}{2}}, \kappa^{\frac{p+1}{2}}>.
$$
\end{thm}
\begin{proof}
Most of the theorem has been proved already; we need only make the following
additional observations.
As $E_0$ is finitely generated as a module
over the Noetherian polynomial algebra
$R=\mathbb{F}_p[s_0,s_-,s_+]$ and $d_0$ is a $R$-module homomorphism,
we conclude $\ker(d_0)$ and hence $E_1$ are finitely generated
$R$-modules. It follows that $E_1$ is finitely generated as an algebra
and that at most a finite number of additional generators might be required.

Regardless, since we know $[u\kappa^{\frac{p-1}{2}}]
=[\kappa^{\frac{p+1}{2}}]=[s_0]=[s_-]=[s_+]=0$
in $E_2$, we know that no $d_2$ differential is supported on the 
exterior 3 line, in Hodge
degree $p-2$ or more as $E_2^{s,0}=0$ for $s \geq p$. 
From this it is an easy argument to conclude
that the only terms that survive to the $E_2$ page are as described
in the theorem. ($E_{\infty}$ is known to be the cohomology
of a point and $d_2$ is the last possible nonzero differential.)

\end{proof}

As a byproduct of all this analysis we have also shown:
\begin{cor}
\label{cor: exact seq}
Let $\mathfrak{sl}_2$ denote $\mathfrak{sl}_2(\mathbb{F}_p)$ for 
$p$ an odd prime and let $S^i$ denote the module of homogeneous, degree 
$i$ polynomials equipped with the dual adjoint action.
Then for $i \geq p-2$, the following sequence is exact:
$$
0 \to H^3(\mathfrak{sl}_2,S^i) \to H^2(\mathfrak{sl}_2, S^{i+1}) 
\to H^1(\mathfrak{sl}_2,S^{i+2}) \to H^0(\mathfrak{sl}_2,S^{i+3}) \to 0.
$$
\end{cor}
\begin{proof}
The above sequence is just 
$$
0 \to E_1^{i,3} \xrightarrow{d_1} E_1^{i+1,2} \xrightarrow{d_1} 
E_1^{i+2,1} \xrightarrow{d_1} E_1^{i+3,0} \to 0
$$
in the spectral sequence $E_r^{*,*}[\mathfrak{sl}_2(\mathbb{F}_p)]$.

It is exact as $E_{\infty}^{s,t}=E_2^{s,t}=0$ for $s+t > p$.
\end{proof}

Note that $d_1(H_{DR}^3)) = d_1(H_{DR}^0 \tau + H_{DR}^0 u)
=H_{DR}^0 \epsilon$ so any missing ``ghost generator'' on the exterior $2$
line has to inject under $d_1$ to a missing ``ghost generator'' on the
exterior $1$ line as the image of $d_1$ from the exterior $3$ line is
accounted for.

The following diagram illustrates $E_1^{*,*}[\mathfrak{sl}_2(\mathbb{F}_p)]$ 
through Hodge degree $p + 1$ for a typical odd prime $p$.  
The given elements are cohomology generators in Hodge degree $s$ and exterior degree $t$.  
The dashed arrows represent the action of the differential $d_2$, and the solid arrows 
represent the action of the differential $d_1$.

\begin{center}
\begin{pspicture}(0,-1)(12,7.2)
  \rput{0}(5.5,-0.8){$E_1^{*,*}[\mathfrak{sl}_2(\mathbb{F}_p)]$}
  \psline{->}(0,-0.2)(0,7.2)
  \rput{0}(-0.3,7){$t$}
  \psline(-0.1,0.5)(0,0.5)
  \rput{0}(-0.3,0.5){$0$}
  \psline(-0.1,2.5)(0,2.5)
  \psline(-0.1,4.5)(0,4.5)
  \psline(-0.1,6.5)(0,6.5)
  \rput{0}(-0.3,6.5){$3$}
  \psline(-0.2,0)(3.15,0)
  \psline(3.15,0)(3.25,0.1)
  \psline(3.25,-0.1)(3.35,0)
  \psline{->}(3.35,0)(12.2,0)
  \rput{0}(12,-0.3){$s$}
  \psline(0.25,-0.1)(0.25,0)
  \rput{0}(0.25,-0.3){$0$}
  \psline(1.25,-0.1)(1.25,0)
  \rput{0}(1.25,-0.3){$2$}
  \psline(2.25,-0.1)(2.25,0)
  \rput{0}(2.25,-0.3){$4$}
  \psline(4.25,-0.1)(4.25,0)
  \rput{0}(4.25,-0.3){$p-3$}
  \psline(6.25,-0.1)(6.25,0)
  \rput{0}(6.25,-0.3){$p-1$}
  \psline(8.25,-0.1)(8.25,0)
  \rput{0}(8.25,-0.3){$p$}
  \psline(11.25,-0.1)(11.25,0)
  \rput{0}(11.25,-0.3){$p+1$}
  \rput{0}(0.25,0.5){$\langle 1 \rangle$}
  \rput{0}(0.25,6.5){$\langle u \rangle$}
  \rput{0}(1.25,0.5){$\langle \kappa \rangle$}
  \rput{0}(1.25,6.5){$\langle \kappa u \rangle$}
  \rput{0}(2.25,0.5){$\langle \kappa^2 \rangle$}
  \rput{0}(2.25,6.5){$\langle \kappa^2 u \rangle$}
  \psline[linestyle=dashed]{->}(0.3,6.2)(1.2,0.8)
  \psline[linestyle=dashed]{->}(1.3,6.2)(2.2,0.8)
  \rput{0}(3.25,0.5){$\cdots$}
  \rput{0}(3.25,6.5){$\cdots$}
  \rput{0}(4.25,0.5){$\langle \kappa^{\frac{p-3}{2}} \rangle$}
  \rput{0}(4.25,6.5){$\langle \kappa^{\frac{p-3}{2}} u \rangle$}
  \psline[linestyle=dashed](4.2,6.2)(4.954,1.676)
  \psline[linestyle=dashed]{->}(4.954,1.676)(6,0.8)
  \rput{0}(6.25,0.5){$\langle \kappa^{\frac{p-1}{2}} \rangle$}
  \rput{0}(6.25,2.5){$\langle \lambda_0, \lambda_+, \lambda_- \rangle$}
  \rput{0}(6.25,4.5){$\langle \mu_0, \mu_+, \mu_- \rangle$}
  \rput{0}(6.25,6.5){$\langle \tau \rangle$}
  \psline{->}(5.8,2.25)(7.65,0.7)
  \psline{->}(6.3,2.25)(8.15,0.7)
  \psline{->}(6.8,2.25)(8.65,0.7)
  \psline{->}(5.7,4.25)(7.55,2.7)
  \psline{->}(6.2,4.25)(8.05,2.7)
  \psline{->}(6.7,4.25)(8.55,2.7)
  \psline{->}(6.3,6.25)(8.15,4.7)
  \rput{0}(8.25,0.5){$\langle s_0, s_+, s_- \rangle$}
  \rput{0}(8.25,2.5){$\langle f_0, f_+, f_-, \gamma \rangle$}
  \rput{0}(8.25,4.5){$\langle \epsilon \rangle$}
  \psline{->}(9.1,2.25)(10.95,0.7)
  \rput{0}(11.25,0.5){$\langle \kappa^{\frac{p+1}{2}} \rangle$}
  \rput{0}(11.25,2.5){$\langle \kappa \lambda_0, \kappa \lambda_+, \kappa \lambda_- \rangle$}
  \rput{0}(11.25,4.5){$\langle \kappa \mu_0, \kappa \mu_+, \kappa \mu_- \rangle$}
  \rput{0}(11.25,6.5){$\langle \kappa \tau \rangle$}
\end{pspicture}
\end{center}

The next diagram illustrates $E_1^{*,*}[\mathfrak{sl}_2(\mathbb{F}_5)]$ through Hodge degree $20$.  
The numbers indicate cohomology dimensions in Hodge degree $s$ and exterior degree $t$.  
The dashed arrows represent the differential $d_2$, which is an isomorphism in each case.  
The solid arrows represent the differential $d_1$, which gives a short exact sequence along each diagonal line.

\begin{center}
\begin{pspicture}(0,-1)(11,4.2)
  \rput{0}(5.5,-0.8){$E_1^{*,*}[\mathfrak{sl}_2(\mathbb{F}_5)]$}
  \psline{->}(0,-0.2)(0,4.2)
  \rput{0}(-0.3,4){$t$}
  \psline(-0.1,0.5)(0,0.5)
  \rput{0}(-0.3,0.5){$0$}
  \psline(-0.1,1.5)(0,1.5)
  \psline(-0.1,2.5)(0,2.5)
  \psline(-0.1,3.5)(0,3.5)
  \rput{0}(-0.3,3.5){$3$}
  \psline{->}(-0.2,0)(11.2,0)
  \rput{0}(11,-0.3){$s$}
  \psline(0.25,-0.1)(0.25,0)
  \rput{0}(0.25,-0.3){$0$}
  \psline(0.75,-0.1)(0.75,0)
  \psline(1.25,-0.1)(1.25,0)
  \psline(1.75,-0.1)(1.75,0)
  \psline(2.25,-0.1)(2.25,0)
  \psline(2.75,-0.1)(2.75,0)
  \rput{0}(2.75,-0.3){$5$}
  \psline(3.25,-0.1)(3.25,0)
  \psline(3.75,-0.1)(3.75,0)
  \psline(4.25,-0.1)(4.25,0)
  \psline(4.75,-0.1)(4.75,0)
  \psline(5.25,-0.1)(5.25,0)
  \rput{0}(5.25,-0.3){$10$}
  \psline(5.75,-0.1)(5.75,0)
  \psline(6.25,-0.1)(6.25,0)
  \psline(6.75,-0.1)(6.75,0)
  \psline(7.25,-0.1)(7.25,0)
  \psline(7.75,-0.1)(7.75,0)
  \rput{0}(7.75,-0.3){$15$}
  \psline(8.25,-0.1)(8.25,0)
  \psline(8.75,-0.1)(8.75,0)
  \psline(9.25,-0.1)(9.25,0)
  \psline(9.75,-0.1)(9.75,0)
  \psline(10.25,-0.1)(10.25,0)
  \rput{0}(10.25,-0.3){$20$}
  \rput{0}(0.25,0.5){$1$}
  \rput{0}(0.25,1.5){$0$}
  \rput{0}(0.25,2.5){$0$}
  \rput{0}(0.25,3.5){$1$}
  \rput{0}(0.75,0.5){$0$}
  \rput{0}(0.75,1.5){$0$}
  \rput{0}(0.75,2.5){$0$}
  \rput{0}(0.75,3.5){$0$}
  \rput{0}(1.25,0.5){$1$}
  \rput{0}(1.25,1.5){$0$}
  \rput{0}(1.25,2.5){$0$}
  \rput{0}(1.25,3.5){$1$}
  \psline[linestyle=dashed]{->}(0.35,3.2)(1.15,0.8)
  \psline[linestyle=dashed]{->}(1.35,3.2)(2.15,0.8)
  \rput{0}(1.75,0.5){$0$}
  \rput{0}(1.75,1.5){$0$}
  \rput{0}(1.75,2.5){$0$}
  \rput{0}(1.75,3.5){$0$}
  \rput{0}(2.25,0.5){$1$}
  \rput{0}(2.25,1.5){$3$}
  \rput{0}(2.25,2.5){$3$}
  \rput{0}(2.25,3.5){$1$}
  \rput{0}(2.75,0.5){$3$}
  \rput{0}(2.75,1.5){$4$}
  \rput{0}(2.75,2.5){$1$}
  \rput{0}(2.75,3.5){$0$}
  \psline{->}(2.35,1.3)(2.65,0.7)
  \psline{->}(2.35,2.3)(2.65,1.7)
  \psline{->}(2.35,3.3)(2.65,2.7)
  \psline{->}(2.85,1.3)(3.15,0.7)
  \psline{->}(2.85,2.3)(3.15,1.7)
  \psline{->}(2.85,3.3)(3.15,2.7)
  \rput{0}(3.25,0.5){$1$}
  \rput{0}(3.25,1.5){$3$}
  \rput{0}(3.25,2.5){$3$}
  \rput{0}(3.25,3.5){$1$}
  \rput{0}(3.75,0.5){$3$}
  \rput{0}(3.75,1.5){$4$}
  \rput{0}(3.75,2.5){$1$}
  \rput{0}(3.75,3.5){$0$}
  \psline{->}(3.35,1.3)(3.65,0.7)
  \psline{->}(3.35,2.3)(3.65,1.7)
  \psline{->}(3.35,3.3)(3.65,2.7)
  \psline{->}(3.85,1.3)(4.15,0.7)
  \psline{->}(3.85,2.3)(4.15,1.7)
  \psline{->}(3.85,3.3)(4.15,2.7)
  \rput{0}(4.25,0.5){$1$}
  \rput{0}(4.25,1.5){$3$}
  \rput{0}(4.25,2.5){$3$}
  \rput{0}(4.25,3.5){$1$}
  \rput{0}(4.75,0.5){$3$}
  \rput{0}(4.75,1.5){$9$}
  \rput{0}(4.75,2.5){$9$}
  \rput{0}(4.75,3.5){$3$}
  \psline{->}(4.35,1.3)(4.65,0.7)
  \psline{->}(4.35,2.3)(4.65,1.7)
  \psline{->}(4.35,3.3)(4.65,2.7)
  \psline{->}(4.85,1.3)(5.15,0.7)
  \psline{->}(4.85,2.3)(5.15,1.7)
  \psline{->}(4.85,3.3)(5.15,2.7)
  \rput{0}(5.25,0.5){$6$}
  \rput{0}(5.25,1.5){$11$}
  \rput{0}(5.25,2.5){$6$}
  \rput{0}(5.25,3.5){$1$}
  \rput{0}(5.75,0.5){$3$}
  \rput{0}(5.75,1.5){$9$}
  \rput{0}(5.75,2.5){$9$}
  \rput{0}(5.75,3.5){$3$}
  \psline{->}(5.35,1.3)(5.65,0.7)
  \psline{->}(5.35,2.3)(5.65,1.7)
  \psline{->}(5.35,3.3)(5.65,2.7)
  \psline{->}(5.85,1.3)(6.15,0.7)
  \psline{->}(5.85,2.3)(6.15,1.7)
  \psline{->}(5.85,3.3)(6.15,2.7)
  \rput{0}(6.25,0.5){$6$}
  \rput{0}(6.25,1.5){$11$}
  \rput{0}(6.25,2.5){$6$}
  \rput{0}(6.25,3.5){$1$}
  \rput{0}(6.75,0.5){$3$}
  \rput{0}(6.75,1.5){$9$}
  \rput{0}(6.75,2.5){$9$}
  \rput{0}(6.75,3.5){$3$}
  \psline{->}(6.35,1.3)(6.65,0.7)
  \psline{->}(6.35,2.3)(6.65,1.7)
  \psline{->}(6.35,3.3)(6.65,2.7)
  \psline{->}(6.85,1.3)(7.15,0.7)
  \psline{->}(6.85,2.3)(7.15,1.7)
  \psline{->}(6.85,3.3)(7.15,2.7)
  \rput{0}(7.25,0.5){$6$}
  \rput{0}(7.25,1.5){$18$}
  \rput{0}(7.25,2.5){$18$}
  \rput{0}(7.25,3.5){$6$}
  \rput{0}(7.75,0.5){$10$}
  \rput{0}(7.75,1.5){$21$}
  \rput{0}(7.75,2.5){$14$}
  \rput{0}(7.75,3.5){$3$}
  \psline{->}(7.35,1.3)(7.65,0.7)
  \psline{->}(7.35,2.3)(7.65,1.7)
  \psline{->}(7.35,3.3)(7.65,2.7)
  \psline{->}(7.85,1.3)(8.15,0.7)
  \psline{->}(7.85,2.3)(8.15,1.7)
  \psline{->}(7.85,3.3)(8.15,2.7)
  \rput{0}(8.25,0.5){$6$}
  \rput{0}(8.25,1.5){$18$}
  \rput{0}(8.25,2.5){$18$}
  \rput{0}(8.25,3.5){$6$}
  \rput{0}(8.75,0.5){$10$}
  \rput{0}(8.75,1.5){$21$}
  \rput{0}(8.75,2.5){$14$}
  \rput{0}(8.75,3.5){$3$}
  \psline{->}(8.35,1.3)(8.65,0.7)
  \psline{->}(8.35,2.3)(8.65,1.7)
  \psline{->}(8.35,3.3)(8.65,2.7)
  \psline{->}(8.85,1.3)(9.15,0.7)
  \psline{->}(8.85,2.3)(9.15,1.7)
  \psline{->}(8.85,3.3)(9.15,2.7)
  \rput{0}(9.25,0.5){$6$}
  \rput{0}(9.25,1.5){$18$}
  \rput{0}(9.25,2.5){$18$}
  \rput{0}(9.25,3.5){$6$}
  \rput{0}(9.75,0.5){$10$}
  \rput{0}(9.75,1.5){$30$}
  \rput{0}(9.75,2.5){$30$}
  \rput{0}(9.75,3.5){$10$}
  \psline{->}(9.35,1.3)(9.65,0.7)
  \psline{->}(9.35,2.3)(9.65,1.7)
  \psline{->}(9.35,3.3)(9.65,2.7)
  \psline{->}(9.85,1.3)(10.15,0.7)
  \psline{->}(9.85,2.3)(10.15,1.7)
  \psline{->}(9.85,3.3)(10.15,2.7)
  \rput{0}(10.25,0.5){$15$}
  \rput{0}(10.25,1.5){$34$}
  \rput{0}(10.25,2.5){$25$}
  \rput{0}(10.25,3.5){$6$}
  \rput{0}(10.75,0.5){$\cdots$}
  \rput{0}(10.75,1.5){$\cdots$}
  \rput{0}(10.75,2.5){$\cdots$}
  \rput{0}(10.75,3.5){$\cdots$}
\end{pspicture}
\end{center}

Computer calculations indicate that the 17 generators we have listed
are sufficient to generate the whole algebra through high Hodge degree
so we conjecture:

\begin{conjecture}
\label{conj: generators}
The 17 elements $u, \kappa, \tau, \epsilon, \gamma, \lambda_i,
\mu_i, f_i, s_i$ generate the algebra $H^*(\mathfrak{sl}_2(\mathbb{F}_p), Poly(ad^*))$.
\end{conjecture}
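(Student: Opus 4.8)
We sketch a strategy toward Conjecture~\ref{conj: generators}. The plan is to reformulate it as the vanishing of a quotient differential complex and then attack that by the weight stratification. Write $R=\mathbb{F}_p[s_0,s_-,s_+]$ and recall $H^0_{DR}=E_1^{*,0}=\mathbb{F}_p[\kappa,s_0,s_-,s_+]/(\kappa^p-s_0^2-s_-s_+)$ (Proposition~\ref{pro: 0-line computation}), and that $E_1^{*,*}$ is a finitely generated $R$-module since $E_0^{*,*}$ is and $d_0$ is $R$-linear. The first point to record is that $d_1$ is $H^0_{DR}$-linear: $d_1$ annihilates every suspended variable $y_i$, hence annihilates $H^0_{DR}\subseteq\mathbb{F}_p[y_0,y_-,y_+]$, and as the generators of $H^0_{DR}$ have even total degree there is no Leibniz sign, so $d_1(a\omega)=a\,d_1(\omega)$ for $a\in H^0_{DR}$. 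Now let $A\subseteq E_1^{*,*}$ be the subalgebra generated by the $17$ listed elements. Reading off the table, $d_1$ carries each generator back into $A$: $d_1u=d_1\kappa=d_1s_i=d_1f_i=d_1\epsilon=0$, $d_1\tau=\epsilon$, $d_1\gamma=\kappa^{(p+1)/2}$, $d_1\lambda_i=s_i$, $d_1\mu_i=f_i$; so $A$ is a sub-differential-graded-algebra, and by the preceding theorem $A$ already contains all of $E_1^{s,t}$ with Hodge degree $s\le p$ as well as the entire exterior $0$- and $3$-lines. Hence $Q:=E_1^{*,*}/A$ is a finitely generated graded $H^0_{DR}$-module concentrated in exterior degrees $t\in\{1,2\}$ and Hodge degrees $s\ge p+1$, and the conjecture is exactly $Q=0$.

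The main structural input is the acyclicity of $(E_1^{*,*},d_1)$ in high total degree: $H^*(E_1^{*,*},d_1)=E_2^{*,*}=\Lambda^*(u)\otimes\mathbb{F}_p[\kappa]/(u\kappa^{(p-1)/2},\kappa^{(p+1)/2})$ is concentrated in total degrees $\le p$ (equivalently, the four-term exact sequences of Corollary~\ref{cor: exact seq}). I would first compute $H^*(A,d_1)$ from the presentation of $A$ by the $17$ generators and the relations they already satisfy in $E_1$ — the controlling ones being $u\kappa^{(p-1)/2}=0$, the $d_1$-pairings $(\lambda_i\!\leftrightarrow\!s_i)$, $(\mu_i\!\leftrightarrow\!f_i)$, $(\tau\!\leftrightarrow\!\epsilon)$, $(\gamma\!\leftrightarrow\!\kappa^{(p+1)/2})$, and the vanishing of all products of exterior degree $>3$ — which a Koszul-type bookkeeping should identify with $E_2^{*,*}$ in high total degree (both are zero there). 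Feeding this into the long exact sequence of $0\to A\to E_1^{*,*}\to Q\to 0$ forces $H^*(Q,\bar d_1)=0$ in every degree in which $Q$ lives; since $Q$ sits in exterior degrees $1,2$ with $\bar d_1\colon Q^{s,1}\to Q^{s+1,0}=0$, this says precisely that $\bar d_1\colon Q^{*,2}\to Q^{*,1}$ is an isomorphism of $H^0_{DR}$-modules raising Hodge degree by one. A useful byproduct of the $A$-module structure on $Q$: the only generators of $A$ of exterior degree $1$ are $\lambda_i,f_i,\gamma$, and for $q\in Q^{s,1}$ one computes $\bar d_1(\lambda_i q)=s_i q$, so $Q^{*,1}$ is "divisible by each $s_i$" through the maps $\bar d_1\circ\lambda_i$.

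The remaining step — the one I expect to be the genuine obstacle — is to deduce $Q=0$. Acyclicity alone cannot do it: a two-term exact complex $M[1]\xrightarrow{\ \sim\ }M$ of finitely generated modules is nonzero for nonzero $M$, and the $s_i$-divisibility above is still consistent with a nonzero $s_i^\infty$-torsion $Q$. What is actually needed is a bound on the Hodge degrees of a module-generating set for $E_1^{*,*}$, and the only realistic route is a weight-by-weight analysis via the $D$-module description of Section~\ref{section: DeRham}. One decomposes $E_1^{*,*}=\bigoplus_k E_1^{*,*}[2pk]$ (all surviving weights are $\equiv 0\bmod p$); for each $k$, reducing $\kappa^p=s_0^2+s_-s_+$ exhibits the $0$- and $3$-lines in weight $2pk$ as free rank-one modules over the weight-$0$ algebra $\mathbb{F}_p[\kappa,y_0]$ (Lemma~\ref{lem: weight0poly}), generated by an explicit monomial times $s_+^{|k|}$ or $s_-^{|k|}$; the four-term exact sequences then pin down the $1$- and $2$-lines, and one must show that modulo $\operatorname{im}(\nabla_{\mathfrak{sl}_2})$ and $\operatorname{im}(Curl_{\mathfrak{sl}_2})$ every weight-$2pk$ class is an $H^0_{DR}$-combination of $s_\pm$-powers times the table generators $\lambda_i,\mu_i,f_i,\gamma,\epsilon$. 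The weight-$0$ component is the richest (it carries $u,\kappa$ and the entire "characteristic-zero" part of $E_1$), and controlling it is a systematic version of the combinatorial identities behind Proposition~\ref{pro: fundnonvanishing}, applied not to a single class but to an explicit $\mathbb{F}_p[\kappa,y_0]$-presentation of $\ker(Curl_{\mathfrak{sl}_2})$ and $\ker(\nabla_{\mathfrak{sl}_2}\cdot)$ in each weight, uniformly in $k$. It is precisely this uniform bookkeeping that separates the proven theorem (finitely many missing generators) from the conjecture (none); the computer evidence makes the expected answer clear, but a complete proof requires pushing the weight-stratified $D$-module computation through.
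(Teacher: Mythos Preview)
The statement you are addressing is a \emph{conjecture} in the paper, not a theorem: the authors offer no proof and say only that ``computer calculations indicate that the 17 generators we have listed are sufficient to generate the whole algebra through high Hodge degree.'' There is therefore no proof in the paper to compare your attempt against.

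Your write-up is itself not a proof but a strategy outline, and you are candid about this. The structural observations you make are correct and would be the natural first steps in any attack: $d_1$ is $H^0_{DR}$-linear, the subalgebra $A$ generated by the $17$ elements is $d_1$-closed, the quotient $Q=E_1^{*,*}/A$ is a finitely generated $H^0_{DR}$-module concentrated in exterior degrees $1,2$ and Hodge degree $\ge p+1$, and the four-term exact sequences force $\bar d_1\colon Q^{*,2}\to Q^{*,1}$ to be an isomorphism. You also correctly note that this acyclicity alone cannot force $Q=0$, and you identify the genuine obstacle as a uniform weight-by-weight $D$-module computation bounding the Hodge degrees of module generators for $E_1^{*,*}$. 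That diagnosis is accurate and is exactly what the paper leaves open; your proposal does not close the gap, and neither does the paper.
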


\section{Bounds on the exceptional torsion in congruence subgroups}

The $p$-group $G(\mathfrak{sl}_2(\mathbb{F}_p))$ is the kernel
of the mod $p$ reduction $SL_2(\mathbb{Z}/p^3\mathbb{Z}) \to
SL_2(\mathbb{F}_p)$. Since $G(\mathfrak{g}) \times \mathbb{Z}/p\mathbb{Z}$
(where $\mathfrak{g}$ is the nonabelian Lie algebra of dimension $2$)
is an index $p$ subgroup of $G(\mathfrak{sl}_2(\mathbb{F}_p))$, it follows
that $e(G(\mathfrak{sl}_2(\mathbb{F}_p))) \leq pe(G(\mathfrak{g}))=p^3$.
In $\cite{Pk}$, it was shown that $e_{\infty}(G(\mathfrak{sl}_2(\mathbb{F}_p)))
=p^2$.

By the results in \cite{BrP}, it is known that
$B_2^*(G(\mathfrak{sl}_2(\mathbb{F}_p)))=E_1^{*,*}[
\mathfrak{sl}_2(\mathbb{F}_p)]$ which was computed (partially) in the last
section.
Using comparisons to the cyclic group of order $p^2$
defined by $E=F=0$, it was shown in $\cite{BrP}$ that
$\beta_2(u)=\beta_2(\kappa)=0$ and $\beta_3(u)$ is a nonzero
multiple of $\kappa$. Thus in particular, as has been the case in all other
computations, we have $(E_r^{*,*},d_r)=(B_{r+1}^*,\beta_{r+1})$
for $\mathfrak{sl}_2(\mathbb{F}_p)$ at least in the range between
Hodge degree $0$ and $p-2$.

Since we have shown that $u\kappa^{\frac{p-3}{2}} \neq 0$ in
$E_1=B_2^*$, it follows that we have argued for a nonzero differential
$\beta_3(u\kappa^{\frac{p-3}{2}})=c\kappa^{\frac{p-1}{2}}$ with
$c \neq 0$. Since it is known from \cite{Pk} that $B_3^*$ is finite
dimensional, it follows that $\kappa^{\frac{p-1}{2}}$
lifts to an exceptional integral cohomology class of order $p^3$ in
$H^{2p-2}(G(\mathfrak{sl}_2(\mathbb{F}_p)), \mathbb{Z})$.

Thus $ED(G(\mathfrak{sl}_2(\mathbb{F}_p))) \geq 2p-2$.

If Conjecture~\ref{conj: generators} holds {\bf and} furthermore
$E_r^{*,*}=B_{r+1}^*$ in all Hodge degrees, then the inequality above would
be an equality. (We feel this is almost certainly true but are unable to
prove it at this time.)

\section{Comments on Modular Forms and Orthogonal Steenrod Structures}
\subsection{Modular Forms}
Following \cite{FTY}, $H^*(SL_2(\mathbb{Z}), Poly_{\mathbb{C}}(V))$
where $V$ is the canonical complex representation of $SL_2(\mathbb{Z})$
can be identified directly via an ``Eichler-Shimura'' correspondence (see \cite{S}) 
with modular forms of certain flavors.

It is well known that the Hodge decomposition
$Poly_{\mathbb{C}}(V)=\oplus_{i=0}^{\infty} Sym^i(V)$ yields all the finite
dimensional complex irreducible representations of $SL_2(\mathbb{C})$ and
equivalently $\mathfrak{sl}_2(\mathbb{C})$. In this picture
$V=Sym^1(V)$ and $ad=Sym^2(V)$. Whitehead's lemma shows that
$H^0(SL_2(\mathbb{C}), Poly(V))=
H^0(\mathfrak{sl}_2(\mathbb{C}), Poly(V))$ is the cohomology of a point
and hence uninteresting. Thus it is crucial in the Eichler-Shimura correspondence
that one has passed to the arithmetic subgroup $SL_2(\mathbb{Z})$.

The closest analog in our work would be
$E_1^{*,*}[\mathfrak{sl}_2(\mathbb{Z})]=  
H^*(\mathfrak{sl}_2(\mathbb{Z}), Poly(ad^*))$,
which we have shown has $p$-torsion for all primes $p$.
Here there is no longer a nice exponential correspondence between
$\mathfrak{sl}_2(\mathbb{Z})$ and $SL_2(\mathbb{Z})$ and the adjoint
representation has been used, rather than the canonical representation, so there
is no direct relation to modular forms, though it does seem
reasonable to suspect an indirect one.

However, this arithmetic object shares properties with
$H^*(SL_2(\mathbb{Z}), Poly(W))$, where $W$ is the canonical integral 
representation of $SL_2(\mathbb{Z})$, which has also been shown to have
$p$-torsion for all primes $p$ in unpublished
work of F. Cohen, M. Salvetti and F. Callegaro.

\subsection{Orthogonal Steenrod Structures.}
The spectral sequence $E_r^{*,*}$ used in this paper was motivated
by the Bockstein spectral sequence of $p$-groups associated to
$p$-adic Lie algebras.

For a Lie algebra $\mathfrak{g}$, the spectral sequence
$E_r^{*,*}[\mathfrak{g}]$ arises as that of a double complex with
differentials $d_0$ and $d_1$.

Over $\mathbb{F}_p$, $E_0^{*,*}=\Lambda^*(V) \otimes Poly(V)$
and $d_0$ is the Bockstein arising from the $p$-group $G(\mathfrak{sl}_2(\mathbb{F}_p))$ while $d_1$ is the Bockstein arising from the elementary abelian
$p$-group. The higher Steenrod $P$-power operations are axiomatically
determined and agree for the two $p$-groups.

Thus while the two $p$-groups have the same $\mathbb{F}_p$-cohomology
and Steenrod $p$-power operations, their Bocksteins act ``orthogonally''
in the following sense:

\begin{defn}(Orthogonal Steenrod Structures)
Let $\mathfrak{U}$ be the category of unstable modules over the
mod $p$ Steenrod algebra $A_p$. A bigraded module $F^{m,n}$
is an {\bf orthogonal Steenrod bimodule} if \\
(1) There are two actions of $A_p$ on $F^{m,n}$
such that the $P$-power operations agree under the two actions but
the Bockstein operators act differently say via $\beta_0$ and $\beta_1$
respectively. \\
(2) $\beta_0$ raises $m$-degree by one and preserves $n$-degree while
$\beta_1$ raises $n$-degree by one and preserves $m$-degree. \\
(3) $\beta_0 \circ \beta_1 = - \beta_1 \circ \beta_0.$ \\
(4) The total complex determined by the pair of commuting
differentials $\beta_0$ and $\beta_1$ is acyclic.
\end{defn}
(Note the bigrading above does not coincide with the one we have been using in the paper, a 
regrading was made for convenience and is explained in the appendix.)

In our case the resulting spectral sequence derived from these anti-commuting
Bocksteins proved very useful. Considering that every indecomposable
injective $\mathfrak{U}$-module is of the form $L \otimes J[n]$
where $L$ is a summand of the cohomology of an elementary abelian $p$-group
and $J[n]$ is a Brown-Gitler module (see \cite{LS}), it seems that considering
which equivalence classes of extensions in $Ext_{\mathfrak{U}}(k,k)$
can be equipped with such ``orthogonal structures'' might be interesting.

\section{Acknowledgments}
We would like to thank Fred Cohen and John Harper
for many interesting and useful discussions regarding the topics in this
paper. We would also like to thank the anonymous referee who reviewed the first draft for many useful comments 
and suggestions. 

\appendix
\section{Construction of the Algebraic Spectral Sequence $E[\mathfrak{g}]$}

In this section, we will derive a spectral sequence for Lie algebra
cohomology which is based on the Bockstein spectral sequence of the
groups mentioned in the introduction but is defined for Lie algebras
over an arbitrary PID $\mathbf{k}$. We give what are probably too many
details so the reader is referred to the short summary of the final results
given in an earlier section.

Let $\mathbf{k}$ be an arbitrary PID.
Let $\mathfrak{L}$ be a $\mathbf{k}$-Lie algebra
and $V$ be a finite dimensional $\mathfrak{L}$-module.
Set
$V^{(s)}=V\otimes\dots\otimes V$ (s times) for $s \geq 1$
and $V^{(0)}=\mathbf{k}$.
$V^{(s)}$ is given the usual $\mathfrak{L}$-module structure, i.e.\
\begin{align*}
\begin{split}
u \cdot (v_1\otimes\dots\otimes v_s)=&(u \cdot v_1\otimes\dots\otimes v_s)\\
& + (v_1\otimes u \cdot v_2 \otimes\dots\otimes v_s) + \dots +
(v_1\otimes\dots\otimes u \cdot v_s)
\end{split}
\end{align*}
for all $u \in \mathfrak{L}$. ($V^{(0)}$ is given the trivial action).
The $\mathfrak{L}$-action on $V$ is extended to $V^{(s)}$
by saying it acts via derivations.

Let $T^*(V)=\mathbf{k} \oplus V \oplus V^{(2)} \oplus \dots$ be
the usual tensor algebra. We will make it a graded algebra by
setting $V^{(n)}$ to have grading $2n$.
Thus $T^*(V)$ becomes a $\mathfrak{L}$-module
by the actions defined above. Let $I$ be the ideal generated by elements
$v\otimes w - w\otimes v$ for all $v,w \in V$. Then it is easy to check
that the action of $\mathfrak{L}$ preserves $I$. Thus $S^*(V)=T^*(V)/I$ is
a graded algebra which inherits an $\mathfrak{L}$ action from $T^*(V)$.
Note that the $\mathfrak{L}$ action preserves the grading of $S^*(V)$ so
that $S^i(V)$ is a $\mathfrak{L}$-module for all $i \geq 0$. It is well
known that $S^*(V)$ is a polynomial algebra on $k$ variables where
$k=\dime(V)$. Fix $u \in \mathfrak{L}$; then for $v_1,\dots,v_s \in V$ we have
$$
u \cdot (v_1\dots v_s) = \sum_{i=1}^sv_1\dots v_{i-1} \cdot
(u\cdot v_i) \cdot v_{i+1} \dots v_s, 
$$
so $u$ acts as a derivation on $S^*(V)$.
Given a $\mathfrak{L}$-module $M$ let $\wedge^*(\mathfrak{L},M)$ be the usual
Koszul resolution for $H^*(\mathfrak{L};M)$. Thus $\wedge^i(\mathfrak{L},M)$
consists of $M$-valued alternating $i$-forms on $\mathfrak{L}$ and
\begin{align*}
\begin{split}
(d\omega)(x_0,\dots,x_s) =& \sum_{i < j}(-1)^{i+j}\omega([x_i,x_j],x_0,\dots
\Hat{x_i},\dots,\Hat{x_j},\dots,x_s) \\
&+ \sum_{i=0}^s(-1)^ix_i\cdot\omega(x_0,\dots,\Hat{x_i},\dots,x_s)
\end{split}
\end{align*}
for all $\omega \in \wedge^s(\mathfrak{L},M)$ and all $x_0,\dots,x_s \in \mathfrak{L}$.

Consider
$$
H = \bigoplus_{i=0}^{\infty} \wedge^*(\mathfrak{L},S^i(V)),
$$
the direct
sum of Koszul complexes equipped with the differential $d$ which is the direct sum
of the differentials of each of the complexes involved. Using the algebra
structure of $S^*(V)$ one can give $H$ an algebra structure with grading
where the grading of $\wedge^1(\mathfrak{L},S^0(V))$ is 1 and that of
$\wedge^0(\mathfrak{L},S^1(V))$ is 2. With these conventions
$H$ is isomorphic to $\wedge^*(x_1,\dots,x_n) \otimes \mathbf{k}[s_1,\dots,s_k]$
where $n=\dime(\mathfrak{L})$, $x_1,\dots,x_n$ a dual basis to a basis of
$\mathfrak{L}$, $k=\dime(V)$ and $s_1,\dots,s_k$ a basis of $V$.
We will show that the differential $d$ on $H$ is a derivation with respect to
this algebra structure. Thus
$$
d(uv)=(du)v + (-1)^{\degr(u)}u(dv)
$$
for $u,v$ homogeneous elements in $H$. First note that $d$ is a derivation
on $\wedge^*(\mathfrak{L},S^0(V))$. This is because this is the same as
$d$ for $\wedge^*(\mathfrak{L},\mathbf{k})$ which is well known to be a derivation.  
Also on
$\wedge^0(\mathfrak{L},S^*(V))=S^*(V)$ we have
\begin{eqnarray*}
d(s_{i_1}\dots s_{i_m})(u) &=& u \cdot (s_{i_1} \dots s_{i_m}) \\
&=& \sum_{l=1}^m s_{i_1} \dots u\cdot s_{i_l} \dots s_{i_m} \\
&=& \left[\sum_{l=1}^m s_{i_1} \dots (ds_{i_l}) \dots s_{i_m}\right](u)
\end{eqnarray*}
for all $u \in \mathfrak{L}$. Thus $d$ is a derivation on
$\wedge^0(\mathfrak{L},S^*(V))$.
Now the elements
$$
x_{\mu_1}\dots x_{\mu_t}s_{\lambda_1}\dots s_{\lambda_m}
$$
for $\mu_1 < \dots < \mu_t$ and $\lambda_i$ arbitrary form a basis for
$H$.
Let $x_{\bar{\mu}}=x_{\mu_1}\dots x_{\mu_t}$ and
$s_{\bar{\lambda}}=s_{\lambda_1}\dots s_{\lambda_m}$. We also say
$l(\bar{\lambda})=m$ etc. So using this sort
of notation let $x_{\bar{\mu}}s_{\bar{\lambda}}$ and
$x_{\bar{\eta}}s_{\bar{\kappa}}$ be two arbitrary basis elements.
Then
$$
d(x_{\bar{\mu}}s_{\bar{\lambda}}x_{\bar{\eta}}s_{\bar{\kappa}})
=d(x_{\bar{\mu}}x_{\bar{\eta}}s_{\bar{\lambda}}s_{\bar{\kappa}}).
$$
Suppose we have the identity
\begin{align}
\begin{split}
\label{eq: did}
d(x_{\bar{\mu}}s_{\bar{\lambda}})
=& d(x_{\bar{\mu}})s_{\bar{\lambda}}
 + (-1)^{l(\bar{\mu})}x_{\bar{\mu}}d(s_{\bar{\lambda}}),
\end{split}
\end{align}
then we have
\begin{eqnarray*}
d(x_{\bar{\mu}}s_{\bar{\lambda}}x_{\bar{\eta}}s_{\bar{\kappa}})
&=& d(x_{\bar{\mu}}x_{\bar{\eta}})s_{\bar{\lambda}}s_{\bar{\kappa}}
 + (-1)^{l(\bar{\mu}) + l(\bar{\eta})}x_{\bar{\mu}}x_{\bar{\eta}}
d(s_{\bar{\lambda}}s_{\bar{\kappa}}) \\
&=& d(x_{\bar{\mu}})x_{\bar{\eta}}s_{\bar{\lambda}}s_{\bar{\kappa}}
+ (-1)^{l(\bar{\mu})}x_{\bar{\mu}}d(x_{\bar{\eta}})s_{\bar{\lambda}}
s_{\bar{\kappa}} \\
&& + (-1)^{l(\bar{\mu}) + l(\bar{\eta})}x_{\bar{\mu}}x_{\bar{\eta}}
[d(s_{\bar{\lambda}})s_{\bar{\kappa}} + s_{\bar{\lambda}}d(s_{\bar{\kappa}})].
\end{eqnarray*}
On the other hand,
\begin{eqnarray*}
&& d(x_{\bar{\mu}}s_{\bar{\lambda}})x_{\bar{\eta}}s_{\bar{\kappa}}
+ (-1)^{l(\bar{\mu})}x_{\bar{\mu}}s_{\bar{\lambda}}
d(x_{\bar{\eta}}s_{\bar{\kappa}}) \\
&&= d(x_{\bar{\mu}})x_{\bar{\eta}}s_{\bar{\lambda}}s_{\bar{\kappa}}
+ (-1)^{l(\bar{\mu})+l(\bar{\eta})}x_{\bar{\mu}}x_{\bar{\eta}}s_{\bar{\kappa}}
d(s_{\bar{\lambda}}) 
 + (-1)^{l(\bar{\mu})}x_{\bar{\mu}}d(x_{\bar{\eta}})s_{\bar{\lambda}}
s_{\bar{\kappa}} \\
&& \text{ } + (-1)^{l(\bar{\mu}) +
l(\bar{\eta})}x_{\bar{\mu}}x_{\bar{\eta}}s_{\bar{\lambda}}d(s_{\bar{\kappa}}).
\end{eqnarray*}
Comparing the two expressions we get
\begin{align*}
\begin{split}
d(x_{\bar{\mu}}s_{\bar{\lambda}}x_{\bar{\eta}}s_{\bar{\kappa}})
=d(x_{\bar{\mu}}s_{\bar{\lambda}})x_{\bar{\eta}}s_{\bar{\kappa}}
+ (-1)^{l(\bar{\mu})}x_{\bar{\mu}}s_{\bar{\lambda}}
d(x_{\bar{\eta}}s_{\bar{\kappa}}).
\end{split}
\end{align*}
So using an easy linearity argument we see $d$ is a derivation on $H$.
So it remains to prove equation~\ref{eq: did}. To do this let $\{ e_1,\dots e_n\}$ be
a basis of $\mathfrak{L}$ with $x_i(e_j)=\delta_{i,j}$ the Kronecker delta
function. Then for $\alpha_0 < \dots < \alpha_t$ we have
\begin{eqnarray*}
d(x_{\bar{\mu}}s_{\bar{\lambda}})(e_{\alpha_0},\dots,e_{\alpha_t})
&=& \sum_{i<j}(-1)^{i+j}(x_{\bar{\mu}}s_{\bar{\lambda}})([e_{\alpha_i},
e_{\alpha_j}],e_{\alpha_0},..,\Hat{e_{\alpha_i}},..,
\Hat{e_{\alpha_j}},..,e_{\alpha_t}) \\
&& + \sum_{i=0}^t(-1)^i [x_{\bar{\mu}}e_{\alpha_i} \cdot s_{\bar{\lambda}}]
(e_{\alpha_0},\dots,\Hat{e_{\alpha_i}},\dots,e_{\alpha_t}) \\
&=& d(x_{\bar{\mu}})s_{\lambda}(e_{\alpha_0},\dots,e_{\alpha_t}) \\
&& + \sum_{i=0}^t(-1)^i [x_{\bar{\mu}}e_{\alpha_i} \cdot s_{\bar{\lambda}}]
(e_{\alpha_0},\dots,\Hat{e_{\alpha_i}},\dots,e_{\alpha_t}).
\end{eqnarray*}
So to show equation~\ref{eq: did}, it is enough to show
$$
\sum_{i=0}^t(-1)^i [x_{\bar{\mu}}e_{\alpha_i} \cdot s_{\bar{\lambda}}]
(e_{\alpha_0},\dots,\Hat{e_{\alpha_i}},\dots,e_{\alpha_t})
= (-1)^t x_{\bar{\mu}}(ds_{\bar{\lambda}})(e_{\alpha_0},\dots,e_{\alpha_t}).
$$
This is done by an easy case-by-case analysis and is left to the reader.

\subsection{Differentials}
In the last section we showed that given a finite dimensional
$\mathfrak{L}$-module $V$, the differential $d$ on
$H=\oplus_{i=0}^{\infty}\wedge^*(\mathfrak{L},S^i(V))$ given by the
direct sum of the Koszul differentials is a derivation.
We now discuss a method of
putting differentials $D : H \rightarrow H$ on $H$. This means $D$ is linear,
$D \circ D = 0$ and $D$ is a derivation with respect to the graded algebra
structure of $H$. We choose a basis $\{e_1,\dots,e_n\}$ of $\mathfrak{L}$ and
let $\{x_1,\dots,x_n\}$ be a dual basis. Let $\{s_1,\dots,s_k\}$ be a basis of
$V$. Then, as discussed before,
$$
H=\wedge^*(x_1,\dots,x_n)\otimes \mathbf{k}[s_1,\dots,s_k]
$$
as graded algebras where $\degr(x_i)=1,\degr(s_i)=2$.
We will use this notation freely in the proof of the next lemma.
\begin{lem}
\label{lem: diff}
Let $H$ be as in the paragraph above. Then if
$$
\psi : \wedge^1(\mathfrak{L},S^0(V))
\rightarrow \wedge^0(\mathfrak{L},S^1(V))
$$
is a linear map then it extends
to a unique derivation
$$
D : H \rightarrow H
$$
which vanishes on
$$
\wedge^0(\mathfrak{L},S^*(V))=S^*(V).
$$
Furthermore the derivation
is a differential i.e.\ $D \circ D = 0$.
\end{lem}
\begin{proof}
First let us define $D$ as a linear map $H \rightarrow H$. For
$\alpha_0 < \dots < \alpha_s$ define
$$
D(x_{\alpha_0}\dots x_{\alpha_s})= \sum_{l=0}^s(-1)^{l}x_{\alpha_0}\dots
\psi(x_{\alpha_l})\dots x_{\alpha_s}
$$
and
$$
D(1)=0.
$$
As $\{ x_{\alpha_0}\dots x_{\alpha_s} : -1 \leq s < n ,
\alpha_0 < \dots < \alpha_s \}$ is a basis
for $\wedge^*(\mathfrak{L},S^0(V))$ we see
we have defined a unique linear map
$$
D : \wedge^*(\mathfrak{L},S^0(V)) \rightarrow H
$$
extending $\psi$. It is routine to show (and is left to the reader)
that $D$ defined in this way is a derivation $\wedge^*(\mathfrak{L},S^0(V))
\rightarrow H$. Now we extend $D$ to $\wedge^0(\mathfrak{L},S^*(V))=S^*(V)$ by
defining it to be identically 0 on this part. Finally we define $D$
on an arbitrary basis element $x_{\bar{\alpha}}s_{\bar{\lambda}}$ of $H$ by
$$
D(x_{\bar{\alpha}}s_{\bar{\lambda}})=D(x_{\bar{\alpha}})s_{\bar{\lambda}}.
$$
Note this agrees with the parts already defined and hence defines a linear
map $D : H \rightarrow H$. It is now routine (and left to the reader) to
verify that $D$ is a derivation with respect to the graded algebra structure
on $H$. Now we argue why this is the unique extension of $\psi$ to
such a derivation which vanishes on $\wedge^0(\mathfrak{L},S^*(V))$.
Clearly the definition of $D$ on
$\wedge^*(\mathfrak{L},S^0(V))$ was forced if we want a derivation. Then as
we require $D$ to vanish on $\wedge^0(\mathfrak{L},S^*(V))$ we see that
the definition on an arbitrary basis element is also forced, 
and uniqueness follows. Note that $D$ raises grading by 1.
Now we are left only to show $D \circ D =0$ on $H$.
As $D$ is linear it is enough to check on basis elements
$x_{\bar{\alpha}}s_{\bar{\lambda}}$ of $H$. We have
\begin{eqnarray*}
D \circ D (x_{\bar{\alpha}}s_{\bar{\lambda}}) &=&
D(D(x_{\bar{\alpha}})s_{\bar{\lambda}}) \\
&=& D(D(x_{\bar{\alpha}}))s_{\bar{\lambda}} + (-1)^{l(\bar{\alpha})+1}
D(x_{\bar{\alpha}})D(s_{\bar{\lambda}}) \\
&=& D(D(x_{\bar{\alpha}}))s_{\bar{\lambda}}.
\end{eqnarray*}
So we will be done once we show $D(D(x_{\bar{\alpha}}))=0$.
For this let $\bar{\alpha}$ be $\alpha_1 < \dots
< \alpha_s$ for some $1 \leq s \leq n$. Note for $s=1$ we have
$D(x_{\alpha_1}) \in \wedge^0(\mathfrak{L},S^1(V))$ so we have
$D(D(x_{\alpha_1}))=0$. Also $D(D(1))=0$. So we will prove
$D(D(x_{\bar{\alpha}}))=0$ by induction on $s=l(\bar{\alpha})$.
Assume we have shown it for $s<l$ where $l > 1$; then
\begin{eqnarray*}
D(D(x_{\alpha_1}\dots x_{\alpha_l})) &=&
D(D(x_{\alpha_1})x_{\alpha_2}\dots x_{\alpha_l} - x_{\alpha_1}D(x_{\alpha_2}
\dots x_{\alpha_l})) \\
&=& D(D(x_{\alpha_1}))x_{\alpha_2}\dots x_{\alpha_l}
+ D(x_{\alpha_1})D(x_{\alpha_2}\dots x_{\alpha_l}) \\
&& -D(x_{\alpha_1})D(x_{\alpha_2}\dots x_{\alpha_l})
+ x_{\alpha_1}D(D(x_{\alpha_2}\dots x_{\alpha_l})) \\
&=& 0,
\end{eqnarray*}
where in the last step we used cancellation and the inductive hypothesis.
Thus by induction we are done and we have shown $D \circ D =0$ on $H$, as
desired.
\end{proof}
Now we study when the differential $D$ constructed in Lemma~\ref{lem: diff}
has the additional property $d \circ D = - D \circ d$ on $H$, where $d$
is the Koszul differential of $H$ as in section 1. Let $\{e_1,\dots,e_n\}$
be a basis for $\mathfrak{L}$ as before, $\{x_1,\dots,x_n\}$ the dual basis, 
and $c_{ij}^k=[e_i,e_j]_k$ the structure constants of the Lie algebra
$\mathfrak{L}$. Let $\{s_1,\dots,s_k\}$ be a basis of $V$ as before.
\begin{lem}
\label{lem: comdiff}
Assume the notation of the preceding paragraph.
Let
$$
\psi : \wedge^1(\mathfrak{L},S^0(V)) \rightarrow
\wedge^0(\mathfrak{L},S^1(V))
$$ be a linear map. Let $D$ be the differential
extending $\psi$ as given by Lemma~\ref{lem: diff}. Then
$d \circ D = - D \circ d$ if and only if
$$
e_j \cdot \psi(x_i) =
\sum_{l=1}^nc_{lj}^i\psi(x_l)
$$
and
$$
\sum_{l=1}^n(e_l \cdot s_t)\psi(x_l)=0
$$
for all $1 \leq i,j \leq n$ and $1 \leq t \leq k$.
\end{lem}
\begin{proof}
Since $D$ and $d$ are linear it is enough to check $d \circ D = - D \circ d$
on basis elements $x_{\bar{\alpha}}s_{\bar{\lambda}}$. So we have
$$
(d \circ D)(x_{\bar{\alpha}}s_{\bar{\lambda}}) =
d(D(x_{\bar{\alpha}})s_{\bar{\lambda}}) 
= d(D(x_{\bar{\alpha}}))s_{\bar{\lambda}} + (-1)^{l(\bar{\alpha})+1}
D(x_{\bar{\alpha}})d(s_{\bar{\lambda}}).
$$
On the other hand we have
\begin{eqnarray*}
(- D \circ d)(x_{\bar{\alpha}}s_{\bar{\lambda}}) &=&
-D(d(x_{\bar{\alpha}})s_{\bar{\lambda}} + (-1)^{l(\bar{\alpha})}
x_{\bar{\alpha}}d(s_{\bar{\lambda}})) \\
&=& - D(d(x_{\bar{\alpha}}))s_{\bar{\lambda}} +(-1)^{l(\bar{\alpha})+1}
D(x_{\bar{\alpha}})d(s_{\bar{\lambda}}) 
 - x_{\bar{\alpha}}D(d(s_{\bar{\lambda}})).
\end{eqnarray*}
So we see by comparing the two expressions that we have $d \circ D =
- D \circ d$ if we can show $ d(D(x_{\bar{\alpha}})) = -D(d(x_{\bar{\alpha}}))$
and $D(d(s_{\bar{\lambda}}))=0$.
Let us work with the second condition. Let
$s_{\bar{\lambda}}=s_{\lambda_1}\dots s_{\lambda_m}$ for some $m \geq 1$ then
\begin{eqnarray*}
D(d(s_{\bar{\lambda}})) &=& D\left(\sum_{i=1}^ms_{\lambda_1}\dots d(s_{\lambda_i})
\dots s_{\lambda_m}\right) \\
&=& D\left(\sum_{i=1}^md(s_{\lambda_i})s_{\lambda_1}\dots \Hat{s_{\lambda_i}} \dots
s_{\lambda_m}\right) \\
&=& \sum_{i=1}^mD(d(s_{\lambda_i}))s_{\lambda_1}\dots \Hat{s_{\lambda_i}} \dots
s_{\lambda_m}.
\end{eqnarray*}
So we see that to show $D(d(s_{\bar{\lambda}}))=0$ one needs only show
$D(d(s_i))=0$ for all $1 \leq i \leq k$. Now we work with the first
condition above. Suppose one has shown $d(D(x_i))=-D(d(x_i))$ for $1 \leq i
\leq n$. Then let us use induction on $t=l(\bar{\alpha})$ to show
$d(D(x_{\bar{\alpha}}))=-D(d(x_{\bar{\alpha}}))$. So by hypothesis we
have this for $t=1$ so assume we have shown it for $t < l$ for some
$l >1$. Then one has:
\begin{eqnarray*}
-D(d(x_{\alpha_1}\dots x_{\alpha_l})) &=&
-D(d(x_{\alpha_1})x_{\alpha_2}\dots x_{\alpha_l} - x_{\alpha_1}d(x_{\alpha_2}
\dots x_{\alpha_l})) \\
&=& -D(d(x_{\alpha_1}))x_{\alpha_2}\dots x_{\alpha_l} - d(x_{\alpha_1})D(
x_{\alpha_2}\dots x_{\alpha_l}) \\
&& + D(x_{\alpha_1})d(x_{\alpha_2}\dots x_{\alpha_l})
- x_{\alpha_1}D(d(x_{\alpha_2}\dots x_{\alpha_l})) \\
&=& d(D(x_{\alpha_1}))x_{\alpha_2}\dots x_{\alpha_l} - d(x_{\alpha_1})D(
x_{\alpha_2}\dots x_{\alpha_l}) \\
&& + D(x_{\alpha_1})d(x_{\alpha_2}\dots x_{\alpha_l})
+ x_{\alpha_1}d(D(x_{\alpha_2}\dots x_{\alpha_l})) \\
&=& d(D(x_{\alpha_1})x_{\alpha_2}\dots x_{\alpha_l} - x_{\alpha_1}D(
x_{\alpha_2}\dots x_{\alpha_l})) \\
&=& d(D(x_{\alpha_1}\dots x_{\alpha_l})).
\end{eqnarray*}
Thus by induction we have the second condition. We conclude from all
the above that if we have $d(D(x_i))=-D(d(x_i))$ for all $1 \leq i \leq n$
and $D(d(s_i))=0$ for all $1 \leq i \leq k$ then we have $d \circ D =
- D \circ d$ on $H$ as desired. Let us translate the condition
$d(D(x_i))=-D(d(x_i))$ noting that
$D(x_i)=\psi(x_i) \in \wedge^0(\mathfrak{L},S^1(V))=S^1(V)$.
Thus $d(D(x_i))(e_j)=e_j \cdot \psi(x_i)$. On the other hand, 
\begin{eqnarray*}
-D(d(x_i))(e_j) &=& D\left(\sum_{l < m}c_{lm}^ix_lx_m\right)(e_j) \\
&=& \sum_{l<m}c_{lm}^i[D(x_l)x_m - x_lD(x_m)](e_j) \\
&=& \sum_{l<j}c_{lj}^i\psi(x_l) -\sum_{j<m}c_{jm}^i\psi(x_m) \\
&=& \sum_{l<j}c_{lj}^i\psi(x_l) +\sum_{j<m}c_{mj}^i\psi(x_m) \\
&=& \sum_{l=1}^nc_{lj}^i\psi(x_l).
\end{eqnarray*}
So $d(D(x_i))=-D(d(x_i))$ for all $1 \leq i \leq n$ if and only if
$$
e_j \cdot \psi(x_i) = \sum_{l=1}^nc_{lj}^i\psi(x_l)
$$
for all $1 \leq i,j \leq n$. Now let us translate the condition
$D(d(s_i))=0$. Note $d(s_i) \in \wedge^1(\mathfrak{L},S^1(V))$ is a 1-form
on $\mathfrak{L}$ with values in $S^1(V)=V$ so as $d(s_i)(u)=u \cdot s_i$ for
all $u \in \mathfrak{L}$ we have $d(s_i)=\sum_{j=1}^nd(s_i)(e_j)x_j=
\sum_{j=1}^n(e_j \cdot s_i)x_j$. Thus
$$
D(d(s_i)) = D\left(\sum_{j=1}^n(e_j \cdot s_i)x_j\right) 
= \sum_{j=1}^n(e_j \cdot s_i)\psi(x_j)
$$
where we have used that $D(e_j \cdot s_i)=0$ as $e_j \cdot s_i \in \wedge^0(
\mathfrak{L},S^1(V))$. Thus $D(d(s_i))=0$ for all $1 \leq i \leq k$ if and
only if $\sum_{j=1}^n(e_j \cdot s_i)\psi(x_j)=0$ for all $1 \leq i \leq k$.
This ends the proof of the lemma.
\end{proof}
\subsection{The spectral sequence}
Now we apply the results of the last section to obtain a spectral sequence
for Lie algebra cohomology. Let $\mathfrak{L}$ be a Lie algebra over the
PID $\mathbf{k}$, and $V=ad^*$ be the $\mathfrak{L}$-module which is
the dual space $\mathfrak{L}^*$ of $\mathfrak{L}$ with action
$$
(u \cdot \alpha)(v)=\alpha([v,u])
$$
for $u,v \in \mathfrak{L}$ and $\alpha \in \mathfrak{L}^*$. It is easy to check
using the Jacobi identity that this is indeed a $\mathfrak{L}$-module.
Fix a basis $\{e_1,\dots,e_n\}$ of $\mathfrak{L}$. Let $\{x_1,\dots,x_n\}$ be
the dual basis viewed in $\wedge^1(\mathfrak{L},S^0(V))=\wedge^1(\mathfrak{L},\mathbf{k})$
and $\{s_1,\dots,s_n\}$ the dual basis viewed in $\wedge^0(\mathfrak{L},S^1(V))=V$.
Then let $\psi : \wedge^1(\mathfrak{L},S^0(V)) \rightarrow
\wedge^0(\mathfrak{L},S^1(V))$ be the linear map given by taking $x_i$ to $s_i$
for all $1 \leq i \leq n$. Recall $H$ and $d$ from the last sections.
By Lemma~\ref{lem: diff} one has $\psi$ extends to a differential $D$ on
$H$. Note we have
$$
e_j \cdot s_i = \sum_{l=1}^n[e_l,e_j]_is_l = \sum_{l=1}^nc_{lj}^is_l,
$$
where the first equality is verified by noting both sides evaluate to the
same thing when evaluated on $e_k$ for $1 \leq k \leq n$ and the second
equality uses the definition of the structure constants $c_{lj}^i$.
Thus as $\psi(x_i)=s_i$ we have
$$
e_j \cdot \psi(x_i)=\sum_{l=1}^nc_{lj}^i\psi(x_l).
$$
We also have
$$
\sum_{j=1}^n(e_j \cdot s_i)\psi(x_j) = \sum_{j=1}^n(e_j\cdot s_i)s_j 
= \sum_{j,l=1}^nc_{lj}^is_ls_j 
= 0.
$$
Thus we have verified the conditions of Lemma~\ref{lem: comdiff} so we conclude
$d \circ D = - D \circ d$.
Thus we have two commuting differentials $D,d$ on $H=\oplus_{i=0}^{\infty}
\wedge^*(\mathfrak{L},S^*(ad^*))$ which we note is bigraded.
Note $S^i(ad^*)$ is isomorphic to the
vector space of symmetric $i$-forms on $\mathfrak{L}$. We will use $S^i=S^i(ad^*)$
for short. Recall we can view
$$
H=\wedge^*(x_1,\dots,x_n) \otimes \mathbf{k}[s_1,\dots,s_n]
$$
as graded algebras where $\degr(x_i)=1,\degr(s_i)=2$ and when
we do this $D$ is a derivation. But $D(x_i)=s_i$ for all $1 \leq i \leq n$
by construction so we see that the cohomology of the complex $(H,D)$ is
concentrated in the zero grading where it is $\mathbf{k}$. (This follows from
K\"unneth's theorem for example). Now let
$$
F_0^{s,q}=\wedge^{q-s}(\mathfrak{L},S^s)
$$
for $s,q \geq 0$. Then we see $d$ gives a vertical differential
$F_0^{s,q} \rightarrow F_0^{s,q+1}$ and $D$ gives a horizontal
differential $F_0^{s,q} \rightarrow F_0^{s+1,q}$ and $D,d$ commute
(up to sign). Such a complex is called a double complex. If we let $M$
be the graded complex with $M^{\ell} = \oplus_{s+q=\ell}F_0^{s,q}$ and differential
$T=D+d$ then one gets by a standard construction two first quadrant $F_0$
spectral sequences
with $F_0^{*,*}$ given by the formula above converging
to the cohomology of $(M^*,T)$. (See \cite{Brown}).
One of these spectral sequences has $F_1^{*,*}$ equal to the cohomology of
$F_0$ with respect to $D$. However we have argued before that this is acyclic
so $F_1^{s,q}=0$ for $s+q>0$ and $F_1^{0,0}=\mathbf{k}$. Thus
$F_{\infty}^{s,q}=0$ for $s+q>0$ and $F_{\infty}^{0,0}=\mathbf{k}$ for this
spectral sequence. Since this abuts to the cohomology of $M$ we conclude
that $(M^*,T)$ is an acyclic complex. (This means that the only nonzero
cohomology is in the zero grading and $H^0(M^*;\mathbf{k})=\mathbf{k}$).
The other spectral
sequence has $F_1^{*,*}$ equal to the cohomology of $F_0$ with respect
to $d$. Thus it is easy to see $F_1^{s,q}=H^{q-s}(\mathfrak{L},S^s)$ for
$s,q \geq 0$. However this spectral sequence also abuts to the cohomology
of $M^*$ so we conclude it also has $F_{\infty}^{s,q}=0$ for $s+q>0$ and
$F_{\infty}^{0,0}=\mathbf{k}$. Also as $D$ and $d$ were derivations one
concludes as usual that all the differentials $d_r$ in this spectral sequence
are derivations with respect to the induced graded algebra structures on the
$F_r$. We summarize in the following theorem:
\begin{thm}
\label{thm: specseq}
Fix an arbitrary PID $\mathbf{k}$. Let $\mathfrak{L}$ be a 
$\mathbf{k}$-Lie algebra. Let $S^s$ be the $\mathfrak{L}$-module of
symmetric $s$-forms on $\mathfrak{L}$ with the dual adjoint action discussed before.
Then there is a first quadrant $F_0$-spectral sequence with
$F_0^{s,q}=\Lambda^{q-s}(\mathfrak{L},S^s)$ and $F_1^{s,q}=H^{q-s}(\mathfrak{L},S^s)$ for $s,q \geq 0$ which has
$F_{\infty}^{s,q}=0$ for $s+q>0$ and $F_{\infty}^{0,0}=\mathbf{k}$.
Furthermore all the differentials $d_r: F_r^{s,q} \to F_r^{s+r,q-r+1}$ are derivations with respect
to the induced graded algebra structure on $F_r$.
\end{thm}

In the above spectral sequence, $s$ denotes the polynomial degree, $q-s$ represents the exterior degree
and $(q-s)+2s=q+s$ represents the total degree. This unusual grading is used so that in the derivation,
the initial differentials would be horizontal and vertical. However, this bigrading is not so easy to
work with in applications so finally we will tweak the definition of the bigrading of the above spectral
to:

$$
E_0^{s,t}=F_0^{s,s+t}.
$$
This yields the final form of the main spectral sequence that we use in this paper:

\begin{thm}
Fix an arbitrary PID {\bf k} and ${\bf k}$-Lie algebra $\mathfrak{L}$. Let $S^s$
be the $\mathfrak{L}$-module of symmetric degree $s$-polynomials on $\mathfrak{L}$,
equipped with the dual adjoint action.

Then there is a first quadrant spectral sequence
$$
E_0^{s,t}=\Lambda^t(\mathfrak{L},S^s)
$$
with differentials $d_r: E_r^{s,t} \to E_r^{s+r,t-(2r-1)}$ which are derivations
with respect to the algebra structure induced from
$$
E_0^{*,*} = \Lambda^*(\mathfrak{L}^*) \otimes S^*(\mathfrak{L}^*).
$$
The number $s$ is called the polynomial (or Hodge) degree, $t$ the exterior degree and $2s+t$ the total degree.
Thus note the differential $d_r$ raises polynomial degree by $r$ while raising total degree by one
(and hence lowers exterior degree by $(2r-1)$).

The spectral sequence converges to the cohomology of a point. Explicit formulas for the differentials
are stated within the paper itself and are easily derived from the construction in the appendix.
\end{thm}
\label{thm: SpectralSequence}

\bigskip

\noindent
Dept. of Mathematics \\
University of Rochester, \\
Rochester, NY 14627 U.S.A. \\
E-mail address: jonpak@math.rochester.edu \\

\bigskip

\noindent
Dept. of Mathematics \\
University of Rochester, \\
Rochester, NY 14627 U.S.A. \\
E-mail address: rogers@math.rochester.edu \\

\end{document}